\documentclass[twoside,11pt]{article} 
\usepackage{jmlr2e} 
\usepackage[utf8]{inputenc}

\usepackage{color}
\usepackage[utf8]{inputenc}
\usepackage[T1]{fontenc}
\usepackage{amsfonts}
\usepackage{amsmath}
\usepackage{amssymb}
\usepackage{verbatim}
\usepackage[margin=1.2in]{geometry}
\usepackage{bbm}
\usepackage{enumitem}
\usepackage{hyperref}
\usepackage{mathtools}

\usepackage{natbib}
\usepackage{graphicx}

\newcommand*\mcupinn[2]{\vcenter{\hbox{$\mathsurround=0pt
  \ifx\displaystyle#1\textstyle\else#1\fi\bigcup$}}}

\newcommand*\mcapinn[2]{\vcenter{\hbox{$\mathsurround=0pt
  \ifx\displaystyle#1\textstyle\else#1\fi\bigcap$}}}
\DeclarePairedDelimiter{\abs}{\lvert}{\rvert}
\DeclarePairedDelimiter{\norm}{\lVert}{\rVert}
\DeclarePairedDelimiter{\parr}{(}{)}
\DeclarePairedDelimiter{\parq}{[}{]}

\DeclarePairedDelimiter{\bra}{\lbrace}{\rbrace}

\DeclarePairedDelimiter{\prodscal}{\langle}{\rangle}

\DeclareMathOperator*{\argmin}{arg\,min}

\DeclareMathOperator*{\expval}{\mathbb{E}}

\DeclareMathOperator*{\tr}{tr}
\DeclareMathOperator*{\st}{\,:\,}
\DeclareMathOperator*{\rk}{\mathrm{rk}}
\newcommand{\bfun}{{\mathbf f}}
\newcommand{\bg}{{\mathbf g}}
\newcommand{\bh}{{\mathbf h}}
\newcommand{\bq}{{\mathbf q}}
\newcommand{\bx}{{\mathbf x}}
\newcommand{\by}{{\mathbf y}}
\newcommand{\bw}{{\mathbf w}}
\newcommand{\be}{{\mathbf e}}

\newcommand{\bb}{{\mathbf b}}
\newcommand{\bv}{{\mathbf v}}
\newcommand{\bu}{{\mathbf u}}
\newcommand{\bz}{{\mathbf z}}
\newcommand{\bG}{{\mathbf G}}
\newcommand{\bI}{{\mathbf I}}
\newcommand{\bK}{{\mathbf K}}
\newcommand{\bM}{{\mathbf M}}
\newcommand{\bX}{{\mathbf X}}
\newcommand{\bY}{{\mathbf Y}}
\newcommand{\bT}{{\mathbf T}}
\newcommand{\bP}{{\mathbf P}}
\newcommand{\bA}{{\mathbf A}}
\newcommand{\bO}{{\mathbf O}}
\newcommand{\bU}{{\mathbf U}}
\newcommand{\bV}{{\mathbf V}}
\newcommand{\bQ}{{\mathbf Q}}
\newcommand{\bW}{{\mathbf W}}
\newcommand{\bzero}{{\mathbf 0}}
\newcommand{\bSigma}{{\boldsymbol \Sigma}}
\newcommand{\bLambda}{{\boldsymbol \Lambda}}
\newcommand{\balpha}{{\boldsymbol \alpha}}
\newcommand{\bbeta}{{\boldsymbol \beta}}
\newcommand{\bgamma}{{\boldsymbol \gamma}}
\newcommand{\btheta}{{\boldsymbol \theta}}
\newcommand{\bPhi}{{\boldsymbol \Phi}}
\newcommand{\bvarphi}{{\boldsymbol \varphi}}
\newcommand{\bpsi}{{\boldsymbol \psi}}
\newcommand{\R}{{\mathbb R}}
\newcommand{\E}{{\mathbb E}}

\usepackage{lastpage}
\ShortHeadings{Spurious Valleys in One-hidden-layer Neural Network Optimization Landscapes}{Venturi, Bandeira and Bruna}

\begin{document}

\title{Spurious Valleys in One-hidden-layer Neural Network Optimization Landscapes}

\author{\name Luca Venturi \email venturi@cims.nyu.edu \\
      \addr Courant Institute of Mathematical Sciences, 251 Mercer Street, New York, NY 10012 
      \AND
      \name Afonso S. Bandeira \email bandeira@cims.nyu.edu \\
      \name Joan Bruna \email bruna@cims.nyu.edu \\
      \addr Courant Institute of Mathematical Sciences, 251 Mercer Street, New York, NY 10012 \\
      \addr Center for Data Science, 60 5th Avenue, New York, NY 10011}


\maketitle

\begin{abstract}
Neural networks provide a rich class of high-dimensional, non-convex optimization problems. Despite their non-convexity, gradient-descent methods often successfully optimize these models. This has motivated a recent spur in research attempting to characterize properties of their loss surface that may explain such success. 

In this paper, we address this phenomenon by studying a key topological property of the loss: the presence or absence of \emph{spurious valleys}, defined as connected components of sub-level sets that do not include a global minimum.
Focusing on a class of one-hidden-layer neural networks defined by smooth (but generally non-linear) activation functions, 
we identify a notion of intrinsic dimension and show that it provides necessary and sufficient conditions for the 
absence of spurious valleys. More concretely, finite intrinsic dimension guarantees that for sufficiently overparametrised 
models no spurious valleys exist, independently of the data distribution. Conversely, infinite intrinsic dimension implies
that spurious valleys do exist for certain data distributions, independently of model overparametrisation. 
Besides these positive and negative results, we show that, although spurious valleys may exist in general, they are confined to low risk levels and avoided with high probability on overparametrised models.

\end{abstract}

\section{Introduction}\label{section:introduction}

Modern machine learning applications involve datasets of increasing dimensionality, complexity and size, which in turn motivate the use of high-dimensional, non-linear models, as illustrated in many deep learning algorithms across computer vision, speech and natural language understanding. The prevalent strategy for learning is to rely on Stochastic Gradient Descent (SGD) methods, that typically operate on non-convex objectives. In this context, an outstanding goal is to provide a theoretical framework that explains under what conditions -- relating input data distribution, choice of architecture and choice of optimization scheme -- this setup will be successful. 

More precisely, let $\bPhi_\btheta\, :\, \mathbb{R}^n \to \mathbb{R}^m$  denote a model class parametrized by $\btheta \in \Theta\subseteq  \mathbb{R}^P$, which in the case of Neural Networks (NNs) contains the aggregated weights across all layers. In a supervised learning setting, this model is deployed on some data $(\bX,\bY)$ random variable taking values in $\mathbb{R}^n\times \mathbb{R}^m$, to predict targets $\bY$ given input $\bX$, and its risk for a given $\btheta$ is 
\begin{equation}
\label{riskeq}
L(\btheta) = \mathbb{E}_{(\bX,\bY) \sim P} \parq*{\ell( \bPhi_\btheta(\bX), \bY)}
\end{equation}
where $\ell$ is a convex loss, such as a square loss or a logistic regression loss. In the following we refer to \eqref{riskeq} as the risk, the energy or the loss interchangeably. The aim is to find $ \btheta^* \in \argmin_{\btheta \in \Theta} L(\btheta)$
and this is attempted in practice by running SGD iteration.
Under some technical conditions, the expected gradient is known to converge to zero \citep{bottou2016optimization}. Understanding the nature of such stationary points - and therefore the landscape of the loss function - is a task of fundamental importance to understand performance of SGD. 

Whereas there is a growing literature in analyzing the behavior of SGD on non-convex objectives \citep{soudry2017implicit, ji2018risk, gunasekar2018characterizing, wilson2017marginal}, 
we focus here on properties of the optimization problem above that are algorithm independent. 
A common factor shared in the above cited works (and in common practice) is that \emph{overparametrisation} of the model class (i.e. $P \gg 1$) often leads to improved performance, despite the potential increase in generalization error.



Our analysis focuses mostly on the class of one-hidden-layer neural networks, with a hidden layer of size $p$, and covers both empirical and population risk landscapes. 
More specifically, we look at presence (or absence) of \emph{spurious valleys}, defined as connected components of the sub-level sets that do not contain a global minima. 
We define two quantities depending on the functional space spanned by neural networks of different widths: the upper intrinsic dimension, defined as the dimension of this linear space, and the lower intrinsic dimension, defined as the minimum number of hidden units to describe any element of the functional space.
Upper and lower intrinsic dimensions define only two scenarios: either (i) they are both finite, enabling positive results; or (ii) they are both infinite, implying the negative results.

\paragraph{Summary of contributions}

More specifically, we show that:
\begin{itemize}
\item For Empirical Risk Minimization or polynomial activations, spurious valleys do not occur as long as the network is sufficiently over-parametrised. For the case of linear and quadratic activations, our results are (up to a constant factor) tight.
\item For non-polynomial non-negative activations, for any hidden width, we construct data distributions which yield spurious valleys with positive measure, whose value is arbitrarily far from the one of the global.
\item Finally, drawing on connections with random features expansions, we show that, even if spurious valleys may appear in general, their measure decreases as the width increases. This holds up to a low energy threshold, which approaches the global minimum at a rate inversely proportional to the hidden layer size (up to log factors). 
\end{itemize}

\paragraph{Related works}

A considerable
amount of literature has attempted to characterize the landscape of the loss function \eqref{riskeq} by studying its critical points. Global optimality results have been obtained for NN architectures with  linear activations \citep{hardt2016identity,kawaguchi2016deep,yun2018small}, quadratic activations~\citep{soltanolkotabi2017theoretical,du2018power} and some more general non-linear activations, under appropriate regularity assumptions \citep{soudry2016no,nguyen2017loss,feizi2017porcupine}. Some other insights have been obtained by leveraging tools for complexity analysis of spin glasses \citep{choromanska2015loss} and random matrix theory \citep{pennington2017geometry}.
Other analysis involved studying goodness of the initialization of the parameter values $\btheta_0$~\citep{daniely2016toward,safran2016quality,du2017gradient} or other topological properties of the loss \eqref{riskeq}, such as connectivity of sub-level sets \citep{draxler2018essentially,brunatopology}.

Several other type of analysis of the convergence of NNs gradient-based optimization algorithms have been considered in the literature. For example, \citep{ge2017learning} proved convergence of GD on a modified loss; \citep{shamir2018resnets} compared optimization properties of residual networks with respect to linear models; in  \citep{dauphin2014identifying} it is argued that the issues arising in the optimization of NN architectures are due to the presence of saddle points in the loss function rather than spurious local minima. Optimization landscapes have also been studied in other contexts than from NNs training, such as non-convex low rank problems \citep{ge2017no}, matrix completion \citep{ge2016matrix}, problems arising in semidefinite programming \citep{boumal2016non,bandeira2016low} and implicit generative modeling \citep{bottou2017geometrical}.

\paragraph{Structure of the paper}

The rest of the paper is structured as follows. Section \ref{section:problem_setting} formally introduces the notion
of spurious valleys and explains why this is a relevant concept from the optimization point of view. It also defines the intrinsic dimensions of a network (Section \ref{sec:dimension}).  
In Section~\ref{sec:no spurious valleys} we state our main positive results (Theorem \ref{master_theorem}) and we discuss two settings where they bear fruit: polynomial activation functions and empirical risk minimization.
Section \ref{sec:spurious valleys exist} is dedicated to constructions of worst case scenarios for activation with infinite lower intrinsic dimension. We then show, in Section \ref{sec:gap}, that, even if spurious valleys may exist, they tend to be confined to regimes of low risk.  
Some conclusive discussion is reported in Section \ref{section:discussion}.

\subsection{Notation}\label{sec:notation}

We introduce notation we use throughout the rest of the paper.
For any integers $n\leq m$ we denote $[n,m] = \bra{n,n+1,\dots,m}$ and, if $n >0$, $[n] = [1,n]$.
We denote scalar valued variables as lowercase non-bold; vector valued variables as lowercase bold; matrix and tensor valued variables and multivariate random variables (r.v.'s) as uppercase bold. Given a vector $\bv\in\mathbb{R}^n$, we denote its components as $v_i$; given a matrix $\bW\in\mathbb{R}^{n\times m}$, we denote its rows as $\bw_i$;  given a tensor $\bT\in\mathbb{R}^{n_1\times \cdots \times n_k}$, we denote its components as $T_{i_1\cdots i_k}$. Given some vectors $\bv_i \in \mathbb{R}^{n_i}$, $i\in[k]$, the tensor product $\bv_1\otimes \cdots \otimes \bv_k$ denotes the $n_1\times \cdots \times n_k$ dimensional tensor $\bT$ whose components are given by $T_{i_1\cdots i_k} = v_{i_1}\cdots v_{i_k}$; given a vector $\bv$, we denote $\bv ^{\otimes k} = \otimes_{i=1}^k \bv$. We denote by $\mathrm{S}^{k}\parr*{\mathbb{R}^{n}}$ the space of order $k$ symmetric tensors on $\mathbb{R}^n$. 
For any $\bT \in \mathrm{S}^{k}\parr*{\mathbb{R}^{n}}$, we define the symmetric rank \citep{comon2008symmetric} as 
$\rk_\mathrm{S}(\bT) = \min\bra*{ p \geq 1 \st \bT = \sum_{i=1}^p u_i\bw_i^{\otimes k}\text{ for some }\bu\in\mathbb{R}^p,\bw_1,\dots,\bw_p\in\mathbb{R}^n}$.
We define $\rk_\mathrm{S}(k,n) = \max\bra{ \rk_\mathrm{S}(\bT) \st \bT \in \mathrm{S}^k(\mathbb{R}^n) }$.
Finally, $\mathbb{S}^{n-1}\subset \mathbb{R}^n$ denotes the $(n-1)$-dimensional sphere $\bra*{\bx\in\mathbb{R}^n\st \norm{\bx}=1}$.

\section{Preliminaries}\label{section:problem_setting}

\subsection{Problem setup}

Let $(\bX,\bY)$ be two r.v.'s. These r.v.'s take values in $\mathbb{R}^n$ and $\mathbb{R}^m$ and represent the \emph{input} and \emph{output} data, respectively. 
We consider oracle square loss functions $L:\Theta\to\mathbb{R}$ of the form
\begin{equation}
L(\btheta) \doteq \expval\parq*{\ell\parr*{\bPhi(\bX;\btheta),\bY}} \label{eq:lossFun}
\end{equation}
where $\ell : \mathbb{R}^m\times \mathbb{R}^m \to [0,\infty)$ is convex in its first argument. For every $\btheta\in\Theta$, the function $\bPhi(\cdot;\btheta) : \mathbb{R}^n \to \mathbb{R}^m$ models the dependence of the output on the input as $\bY\simeq \bPhi(\bX;\btheta)$. We focus on one-hidden-layer NN functions $\bPhi$, i.e. $\bPhi$ of the form
\begin{equation}\label{eq:nn_feature_function}
\bPhi( \bx; \btheta) = \bU\sigma(\bW\bx)
\end{equation}
where $\btheta = (\bU,\bW) \in \Theta \doteq \mathbb{R}^{m\times p}\times \mathbb{R}^{p\times n}$. Here $p$ represents the width of the hidden layer and $\sigma:\mathbb{R}\to\mathbb{R}$ is a continuous element-wise \emph{activation} function. 

The loss function $L(\btheta)$ is (in general) a non-convex object; it may present spurious (i.e. non global) local minima. In this work, we characterize $L(\btheta)$ by determining absence or presence of spurious valleys, as defined below.
\begin{definition}
For all $c\in\mathbb{R}$ we define the sub-level set of $L$ as $\Omega_L(c) = \bra{\btheta \in \Theta\st L(\btheta) \leq c}$. We define a \emph{spurious valley} as a path-connected component of a sub-level set $\Omega_L(c)$ which does not contain a global minimum of the loss $L(\btheta)$.
\end{definition}
Since, in practice, the loss \eqref{eq:lossFun} is minimized with a gradient descent based algorithm, then absence of spurious valleys is a desirable property, if we wish the algorithm to converge to an optimal parameter. It is easy to see that $L(\btheta)$ not having spurious valleys is implied by the following property: 

\begin{enumerate}[label=\textbf{P.\arabic*}]
\item \label{p:conn} Given any \emph{initial} parameter $\tilde{\btheta}\in\Theta$, there exists a continuous path
$\btheta: t\in[0,1] \mapsto \btheta_t \in \Theta$ such that: 
\begin{enumerate}
\item $\btheta_0 = \tilde{\btheta}$
\item $\btheta_1 \in \arg\min_{\btheta\in\Theta} L(\btheta) $
\item The function $t\in[0,1]\mapsto L(\btheta_t)$ is non-increasing
\end{enumerate}
\end{enumerate}
As pointed out in \citep{brunatopology}, this implies that $L$ has no strict spurious (i.e. non global) local minima. 
The absence of generic (i.e. non-strict) spurious local minima 
is guaranteed if the path $\btheta_t$ is such that the function $L(\btheta_t)$ is strictly decreasing. 
For sake of clarity, we review these properties in the following lemma (the proof is reported in the Appendix \ref{app:lemmas}).
\begin{lemma}\label{lemma:what_are_spurious_valleys}
Be $\btheta \mapsto L(\btheta)$ a continuous function. Then, property \ref{p:conn} implies absence of spurious valleys. In particular, this implies absence of strict spurious minima, and of (generally non-strict) spurious minima if property \ref{p:conn} holds with strictly decreasing paths $t\mapsto L(\btheta_t)$. Conversely, presence of spurious valleys implies existence of spurious minima.
\end{lemma}

In the following, we prove absence of spurious valleys by proving that property \ref{p:conn} holds. 
Intuitively, we should think about spurious valleys as regions of the parameter space from which it is impossible to `escape' without `up-climbing' the loss value.

Notice that for many activation functions used in practice (such as the ReLU $\sigma(z) = z_+$), the parameter $\btheta$ determining the function $\bPhi(\cdot;\btheta)$ is determined up to the action of a symmetry group (e.g., in the case of the ReLU, $\sigma$ is a positive homogeneous function). This already prevents strict minima: for any value of the parameter $\btheta\in\Theta$ there exists a (often large) \emph{manifold} $\mathcal{U}_\btheta\subset \Theta$ intersecting $\btheta$ along which the loss function is constant.  

\paragraph{ERM vs population loss} In the following, we consider the loss \eqref{eq:lossFun} defined for a generic distribution $(\bX,\bY)$. In case of a distribution with a finite number of atoms, this corresponds to empirical risk minimization (ERM), which is (usually) the regime where machine learning algorithms perform optimization. On the other hand, for a generic data distribution, this loss is what is called \emph{population} loss, and corresponds to the actual objective that machine learning algorithms aim to minimize. In our work we are interested in analyzing not only the ERM case, but more general population losses. While we in fact focus on highly over-parametrised neural networks, we aim to provide results which apply to the regime where number of data points goes to infinity before the number of parameters.

\subsection{Intrinsic dimension of a network}\label{sec:dimension}

The main result of this work is to exploit that the property of absence of spurious valleys is related to the complexity of the functional space $V_\sigma = \{ f = \Phi_\btheta \st \btheta \in \Theta\}$ defined by the network architecture. We therefore define two measures of such complexity which we will use to show, respectively, positive and negative results in this regard. 

To simplify the discussion, we introduce some notation which we will use throughout the rest of the paper. Let $\sigma:\mathbb{R}\to\mathbb{R}$ be a continuous activation function. For every $\bv\in\mathbb{R}^n$ we denote $\psi_{\sigma,\bv}$ to be the function $\psi_{\sigma,\bv}:\bx\in\mathbb{R}^n \mapsto \sigma(\prodscal{\bv,\bx})\in\mathbb{R}$. We refer to each $\psi_{\sigma,\bv}$ as a \emph{filter} function. If $\bX$ is a r.v. taking values in $\mathbb{R}^n$, we denote by $L^2_\bX$ the space of square integrable function on $\mathbb{R}^n$ w.r.t. the probability measure induced by the r.v. $\bX$. We then define the two following functional spaces:
\begin{align*}
V_{\sigma,p} & = \bra*{ f = \Phi(\cdot;\btheta) \st \btheta = (\bu^T,\bW)\in \Theta = \mathbb{R}^p\times \mathbb{R}^{p\times n }} \\
\mathcal{R}_2(\sigma,n) & = \bra*{ \bX \text{ r.v. taking values in  } \mathbb{R}^n \st \psi_{\sigma,\bv}\in L^2_\bX \text{ for every } \bv\in\mathbb{R}^n }
\end{align*}
$V_{\sigma,p}$ represents the space of (one-dimensional output) functions modeled by the network architecture and  $\mathcal{R}_2(\sigma,n)$ to be the space of ($n$-dimensional) input data distributions for which the filter functions have finite second moment. We finally define 
\begin{equation*}
V_\sigma = \mathrm{span}\parr*{ \bra*{ f \st f \in V_{\sigma,1} } } = \bigcup_{p=1}^\infty V_{\sigma,p}
\end{equation*}
as the linear space spanned by the functions $\psi_{\bv,\sigma}$ for $\bv \in \mathbb{R}^n$.
\begin{definition}\label{def:uid}
Let $\sigma$ be a continuous activation function and $\bX\in \mathcal{R}_2(\sigma,n)$ a r.v. We define\footnote{For any linear subspace $V \subseteq L^2_\bX$, $\mathrm{dim}_{L^2_\bX}(V)$ denotes the dimension of $V$ as a subspace of $L^2_\bX$.}
\begin{equation*}
\mathrm{dim}^*(\sigma,\bX) = \mathrm{\dim}_{L^2_\bX}\parr*{V_\sigma}
\end{equation*}
as the upper intrinsic dimension of the pair $(\sigma,\bX)$.
We define the level $n$ upper intrinsic dimension of $\sigma$ as $\mathrm{dim}^*(\sigma,n) = \mathrm{\dim}\parr*{V_\sigma} = \sup\bra{ \mathrm{dim}^*(\sigma,\bX) \st \bX \in \mathcal{R}_2(\sigma,n)}$.
\end{definition}

The upper intrinsic dimension $\dim^*(\sigma,\bX)$ defined above is therefore the dimension of the functional space spanned by the filter functions $\psi_{\sigma,\bv}\in L^2_\bX$ or, equivalently, of the image of the map $\Phi:\btheta \in \Theta \mapsto \Phi(\cdot;\btheta) \in L^2_\bX$. Notice that $\mathrm{dim}^*(\sigma,\bX) \leq \dim(L^2_\bX)$. In particular, if the distribution $\bX$ is discrete, i.e. it is concentrated on a finite number of points $\bra{\bx_1,\dots,\bx_N} \subset \mathbb{R}^n$, then $\mathrm{dim}^*(\sigma,\bX) \leq \dim(L^2_\bX) \leq N$. Otherwise, if the distribution $\bX$ is not discrete, then $\dim(L^2_\bX) = \infty$. 

The $n$ level upper intrinsic dimension $\dim^*(\sigma,n)$ is defined as the dimension of the functional linear space $V_\sigma$. We note that if $\bX\in\mathcal{R}_2(\sigma,n)$ is a r.v. with almost surely (a.s.) positive density w.r.t. the Lebesgue measure $dx$, then $\dim^*(\sigma,n) = \dim^*(\sigma,\bX)$.

The following lemma exhausts all the cases when the upper intrinsic dimension is not infinite.

\begin{lemma}\label{lemma:uid_when_infty}
Let $\sigma$ be a continuous activation function and $\bX \in \mathcal{R}_2(\sigma,n)$ such that $\dim\parr*{L^2_\bX} = \infty$. If $\sigma(z) = \sum_{k=0}^d a_k z^k$ is a polynomial, then
\begin{equation*}
\dim^*(\sigma,\bX) \leq \sum_{i=1}^d\binom{n + i -1}{i}\mathbf{1}_{\bra{a_i\neq 0}} = O(n^d)
\end{equation*}
Otherwise (i.e. if $\sigma$ is not a polynomial) it holds $\dim^*(\sigma,\bX) = \infty$. 
\end{lemma}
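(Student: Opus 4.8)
\emph{The polynomial case} follows directly from the multinomial theorem: if $\rho(z)=\sum_{k=0}^{d}a_k z^k$, then for every $\bv\in\R^{n}$
\[
\psi_{\rho,\bv}(\bx)=\sum_{k=0}^{d}a_k\prodscal{\bv,\bx}^{k}=\sum_{k:\,a_k\neq 0}a_k\sum_{|\alpha|=k}\binom{k}{\alpha}\,\bv^{\alpha}\,\bx^{\alpha},
\]
so every filter, and hence all of $V_\rho$, lies in the span of the monomials $\bx\mapsto\bx^{\alpha}$ with $|\alpha|\in\{k\le d:a_k\neq 0\}$. Since there are $\binom{n+i-1}{i}$ multi-indices of degree exactly $i$, we obtain $\dim^*(\rho,\bX)=\dim_{L^2_\bX}(V_\rho)\le\sum_{i=0}^{d}\binom{n+i-1}{i}\,\mathbf 1_{\{a_i\neq 0\}}=O(n^{d})$, which is the stated estimate. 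No property of $\bX$ beyond $\psi_{\rho,\bv}\in L^2_\bX$ is used here.

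\emph{The non-polynomial case} is the substantive part; the goal is $\dim^*(\rho,\bX)=\infty$. The plan is to pass to a convenient input distribution and then exploit the Funk--Hecke structure of ridge functions on the sphere. Assuming $\bX$ has a positive density, the observation in Section~\ref{sec:dimension} gives $\dim^*(\rho,\bX)=\dim^*(\rho,n)=\sup_{\bX'}\dim^*(\rho,\bX')$, so it suffices to exhibit one $\bX'\in\mathcal{R}_2(\rho,n)$ with infinite upper intrinsic dimension; we take $\bX'$ uniform on $\mathbb{S}^{n-1}$, which always lies in $\mathcal{R}_2(\rho,n)$ because $\rho$, being continuous, is bounded on the image $[-1,1]$ of $(\bv,\omega)\mapsto\prodscal{\bv,\omega}$. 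Working in $L^2(\mathbb{S}^{n-1})=\bigoplus_{k\ge 0}\mathcal{H}_k$ (the decomposition into degree-$k$ spherical harmonics), the aim is to show that $\overline{V_\rho}$ contains $\mathcal{H}_k$ for infinitely many $k$. For $R>0$ put $\rho_R(t)=\rho(Rt)$ and let $c_{n,k}(\rho_R)$ be its $k$-th Gegenbauer coefficient on $[-1,1]$. By the Funk--Hecke formula, for every $Y_k\in\mathcal{H}_k$,
\[
\int_{\mathbb{S}^{n-1}}\psi_{\rho,R\bv}(\omega)\,Y_k(\bv)\,d\sigma(\bv)=c_{n,k}(\rho_R)\,Y_k(\omega),
\]
and the left-hand side is a (uniform, hence convergent in $L^2$) average of elements of $V_\rho$, so it lies in $\overline{V_\rho}$; thus $\mathcal{H}_k\subseteq\overline{V_\rho}$ as soon as $c_{n,k}(\rho_R)\neq 0$ for some $R$. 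Conversely, were there a $K$ with $c_{n,k}(\rho_R)=0$ for all $k>K$ and all $R$, then each $\rho_R$ would agree on $[-1,1]$ with a polynomial of degree $\le K$, i.e.\ $\rho$ would be a polynomial of degree $\le K$ on every $[-R,R]$ and hence globally, contradicting the hypothesis. Therefore $\mathcal{H}_k\subseteq\overline{V_\rho}$ for infinitely many $k$; since for $n\ge 2$ the sum $\bigoplus_k\mathcal{H}_k$ is infinite-dimensional, $\dim\overline{V_\rho}=\infty$, and as finite-dimensional subspaces are closed this forces $\dim_{L^2(\mathbb{S}^{n-1})}(V_\rho)=\infty$.

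\emph{The main obstacle} is to extract spectral information about $\rho$ from the filters: a single ridge function $\psi_{\rho,\bv}$ mixes all Gegenbauer modes of $\rho$ at once and one cannot read off an individual mode from it, so the decisive device is to average against a spherical harmonic in the \emph{weight} variable $\bv$ and invoke Funk--Hecke, which diagonalises this $O(n)$-equivariant operator. Two bookkeeping points come with this: $\rho$ on a bounded interval does not determine $\rho$ globally, which is why all dilations $\rho_R$ are needed; and the conclusion genuinely relies on $\bX$ (or the surrogate) being non-degenerate and on $n\ge 2$ --- for $n=1$, or for $\bX$ supported on a line, the family $\{\psi_{\rho,\bv}\}$ collapses to a space whose dimension is at most the number of ``pieces'' of $\rho$ and can be finite even for non-polynomial $\rho$ (e.g.\ the ReLU), so the reduction to a rich surrogate distribution is essential.
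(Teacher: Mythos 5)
Your polynomial case coincides with the paper's proof (expand each filter in monomials and count them; the off-by-one between starting the sum at $i=0$ or $i=1$ when $a_0\neq 0$ is present in the paper's own displayed bound as well and is immaterial). For the non-polynomial case you take a genuinely different route. The paper argues by contradiction: assuming $\dim(V_\rho)=q<\infty$, it invokes Theorem 1 of Leshno et al.\ \citep{leshno1993multilayer} to approximate any compactly supported continuous function uniformly on compacta by elements of $V_\rho$, checks via a coefficient-boundedness/subsequence argument that the $L^2_\bX$-limits stay in the finite-dimensional space $V_\rho$, and concludes that $V_\rho$ would be dense in $L^2_\bX$, contradicting $\dim(L^2_\bX)=\infty$. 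You instead pass to the uniform measure on $\mathbb{S}^{n-1}$ and run a spectral argument: Funk--Hecke diagonalises the averaging of the dilated filters $\psi_{\rho,R\bv}$ against spherical harmonics, and if all Gegenbauer modes of every dilation $\rho_R$ beyond degree $K$ vanished, $\rho$ would be a polynomial of degree at most $K$; hence infinitely many spaces $\mathcal{H}_k$ lie in $\overline{V_\rho}$, and since finite-dimensional subspaces are closed, $\dim_{L^2(\mathbb{S}^{n-1})}(V_\rho)=\infty$. Within its scope this is correct, and it is in fact closer in spirit to the harmonic-analysis mechanism the paper uses for the \emph{lower} dimension (the Hermite expansion in Lemma \ref{lemma:infinite lower dimension}); what it buys is an explicit identification of which harmonics are captured, whereas the paper's route is shorter and works directly in $L^2_\bX$ without a surrogate distribution.

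The one real mismatch is scope: the lemma is stated for \emph{any} $\bX\in\mathcal{R}_2(\rho,n)$ with $\dim(L^2_\bX)=\infty$, while your argument needs $\bX$ to have a.e.\ positive density on $\R^n$ (so that $\dim^*(\rho,\bX)=\dim^*(\rho,n)$ and the supremum over surrogate distributions is relevant) and $n\geq 2$; as written it does not prove the literal statement. However, your closing remark is exactly the right observation: because the filters carry no bias term, the literal statement overreaches --- for the ReLU and $\bX$ supported on a line (or for $n=1$), one has $\dim(L^2_\bX)=\infty$ yet $\dim^*(\rho,\bX)=2$ --- and the paper's own proof is exposed to the same issue, since Theorem 1 of Leshno et al.\ concerns spans of $\rho(\prodscal{\bw,\bx}+b)$ \emph{with} biases, and the uniform-approximation property quoted there fails for the bias-free $V_\rho$ (bias-free ReLU combinations are positively homogeneous). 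So your additional hypotheses should be viewed not as a defect of your argument but as the conditions under which the claimed dichotomy actually holds; stating them explicitly (as you do) is the honest fix, and your Funk--Hecke proof is a sound proof of that corrected statement.
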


The proof of the above lemma is based on the universal approximation theorem \citep{leshno1993multilayer}. We then define the lower intrinsic dimension, which corresponds to the concept of `how many hidden neurons are needed to represent a generic function of $V_\sigma$'.

\begin{definition}\label{def:lid}
Let $\sigma$ be a continuous activation function and $\bX\in \mathcal{R}_2(\sigma,n)$ a r.v. We define\footnote{For any subsets $V,W \subseteq L^2_\bX$, we say that $V \subsetneq_{L^2_\bX} W$ if $V \subsetneq W$ as subsets of $L^2_\bX$ (and similar with other inclusions or equalities).}
\begin{equation*}
\mathrm{dim}_*(\sigma,\bX) = \max\bra*{p \geq 1 ~\st~ V_{\sigma,p-1} \subsetneq_{L^2_\bX} V_{\sigma,p}}
\end{equation*}
as the lower dimension of the pair $(\sigma,\bX)$. We define the level $n$ lower dimension of $\sigma$ as $\mathrm{dim}_*(\sigma,n) = \max\bra*{p \geq 1 ~\st~ V_{\sigma,p-1} \subsetneq V_{\sigma,p}} = \sup\bra*{\mathrm{dim}_*(\sigma,\bX) \st \bX\in \mathcal{R}_2(\sigma,n)}$.
\end{definition}

If $\dim_*(\sigma,\bX)$ is finite, then it corresponds to the minimum number of hidden neurons which are needed to represent any function of $V_\sigma$ with the NN architecture \eqref{eq:nn_feature_function}. Clearly, this implies that 
\begin{equation*}
\dim_*(\sigma,\bX) \leq \dim^*(\sigma,\bX)
\end{equation*}
for every continuous activation function $\sigma$ and any $\bX \in \mathcal{R}_2(\sigma,n)$. As with the upper instrinsic dimension, we note that if $\bX\in\mathcal{R}_2(\sigma,n)$ is a r.v. with a.s. positive density w.r.t. the Lebesgue measure $dx$, then $\dim_*(\sigma,n) = \dim_*(\sigma,\bX)$.

In the case of homogeneus polynomial activations $\sigma(z) = z^k$ with $k\geq 1$ integer, the level $n$ lower dimension of $\sigma$ coincides with the notion of (maximal) symmetric tensor rank. 

\begin{lemma}\label{lemma:lid}
Let $\sigma(z) = z^k$, with $k$ positive integer. Then 
\begin{equation*}
\mathrm{dim}_*(\sigma,n) = \mathrm{rk}_\mathrm{S}(k,n)
\end{equation*}
\end{lemma}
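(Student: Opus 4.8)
The plan is to reduce the statement to the classical dictionary between symmetric tensors and homogeneous polynomials. For $\rho(z)=z^k$ the filter functions are $\psi_{\rho,\bv}(\bx)=\prodscal*{\bv,\bx}^k=\prodscal*{\bv^{\otimes k},\bx^{\otimes k}}$, so a function in $V_{\rho,p}$ is exactly a map $\bx\mapsto\prodscal*{\bT,\bx^{\otimes k}}$ with $\bT=\sum_{i=1}^p u_i\bw_i^{\otimes k}\in\mathrm{S}^k\parr*{\R^n}$. First I would record that the linear map $\bT\mapsto P_\bT$, where $P_\bT(\bx)=\prodscal*{\bT,\bx^{\otimes k}}$, is a bijection from $\mathrm{S}^k\parr*{\R^n}$ onto the space of degree-$k$ homogeneous polynomials on $\R^n$: it is injective because the monomial coefficients of $P_\bT$ determine the symmetric tensor $\bT$ and a nonzero homogeneous polynomial is a nonzero function on $\R^n$, and surjective by matching dimensions (both spaces have dimension $\binom{n+k-1}{k}$). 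Under this identification, and directly from the definition of the symmetric rank $\rk_\mathrm{S}$ (padding with zero-coefficient terms when $\rk_\mathrm{S}(\bT)<p$), the set $V_{\rho,p}$ corresponds, for every $p\ge 1$, to $\bra*{\bT\in\mathrm{S}^k\parr*{\R^n}\st\rk_\mathrm{S}(\bT)\le p}$.

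Next I would transport $\mathrm{dim}_*(\rho,n)=\max\bra{p\ge 1\st V_{\rho,p-1}\subsetneq V_{\rho,p}}$ through this identification. Write $r^*=\rk_\mathrm{S}(k,n)$. By definition of $r^*$, the set $\bra*{\bT\st\rk_\mathrm{S}(\bT)\le p}$ equals all of $\mathrm{S}^k\parr*{\R^n}$ as soon as $p\ge r^*$, so $V_{\rho,p-1}=V_{\rho,p}$ for every $p\ge r^*+1$; this already gives $\mathrm{dim}_*(\rho,n)\le r^*$. For the reverse inequality I would use that the maximum defining $r^*$ is attained: choose $\bT_0\in\mathrm{S}^k\parr*{\R^n}$ with $\rk_\mathrm{S}(\bT_0)=r^*$; then, by injectivity of $\bT\mapsto P_\bT$, the function $P_{\bT_0}$ lies in $V_{\rho,r^*}$ but not in $V_{\rho,r^*-1}$, so $V_{\rho,r^*-1}\subsetneq V_{\rho,r^*}$ and hence $\mathrm{dim}_*(\rho,n)=r^*$. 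The one degenerate case to handle separately is $r^*=1$, where $V_{\rho,0}=\bra{0}$ while the nonzero function $x_1^k$ belongs to $V_{\rho,1}$, so $V_{\rho,0}\subsetneq V_{\rho,1}$ still holds.

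I do not anticipate a genuine obstacle here: the essential content is simply that, for $\rho(z)=z^k$, the family $V_{\rho,p}$ is nothing but the set of symmetric tensors of symmetric rank at most $p$ (viewed as polynomials), and that this family stabilizes to the whole space precisely at $p=\rk_\mathrm{S}(k,n)$. The only point needing (routine) care is the injectivity of $\bT\mapsto P_\bT$, i.e. that distinct symmetric tensors induce distinct polynomial functions on $\R^n$; this is where evaluation over all of $\R^n$ is used, and it is the reason the statement concerns the level-$n$ lower dimension rather than $\mathrm{dim}_*(\rho,\bX)$ for a fixed $\bX$ — although the identical argument goes through in $L^2_\bX$ whenever $\bX$ has a.s. positive density with respect to Lebesgue measure.
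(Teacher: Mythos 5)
Your proof is correct and follows essentially the same route as the paper: both identify $\Phi(\bx;\btheta)=\prodscal*{\sum_i u_i\bw_i^{\otimes k},\bx^{\otimes k}}$ so that $V_{\rho,p}$ becomes the set of symmetric tensors of symmetric rank at most $p$, deduce the upper bound from the definition of $\rk_\mathrm{S}(k,n)$, and obtain the lower bound from a tensor of maximal symmetric rank. The only difference is that you make explicit the injectivity of $\bT\mapsto P_\bT$ (and the degenerate case $p=1$), which the paper's proof uses implicitly.
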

Finally, the next lemma implies that for most non-polynomial activation functions practical interest, the lower intrinsic dimension $\dim_*(\sigma,n)$ is infinite.

\begin{lemma}\label{lemma:infinite lower dimension}
Let $\sigma$ be a continuous activation function such that $\sigma \in L^2(\mathbb{R}, e^{-x^2/2}\,dx)$ and $n>1$. Then $\dim_*(\sigma,n) = \infty$ if and only if $\sigma$ is not a polynomial.
\end{lemma}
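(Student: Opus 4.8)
The plan is to prove the two implications separately; the direction ``$\rho$ a polynomial $\Rightarrow \dim_*(\rho,n)<\infty$'' is essentially free, so the content is in the converse. If $\rho(z)=\sum_{k=0}^d a_k z^k$, then every filter $\psi_{\rho,\bv}$ is a polynomial of degree $\le d$ in $\bx$, hence $V_\rho$ is finite‑dimensional and by Lemma~\ref{lemma:uid_when_infty} we have $\dim^*(\rho,n)=\dim(V_\rho)=O(n^d)$; since $\dim_*(\rho,n)\le\dim^*(\rho,n)$, this gives $\dim_*(\rho,n)<\infty$. For the converse, first I would reduce to a cleaner statement: $\dim_*(\rho,n)=\infty$ is equivalent to $V_{\rho,p-1}\subsetneq V_{\rho,p}$ for \emph{every} $p\ge 1$. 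Indeed, if $V_{\rho,p-1}=V_{\rho,p}$ for some $p$, then splitting off a single neuron from a width‑$(q+1)$ network shows $V_{\rho,q+1}=V_{\rho,q}$ for all $q\ge p-1$, so the set defining $\dim_*$ in Definition~\ref{def:lid} is bounded. The case $p=1$ is immediate ($V_{\rho,0}=\{0\}\subsetneq V_{\rho,1}$, as $\rho\not\equiv 0$ because $\rho$ is not a polynomial), so from now on fix $p\ge 2$ and aim to exhibit a function in $V_{\rho,p}\setminus V_{\rho,p-1}$.

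The main tool I would invoke is the classical rigidity of vanishing sums of ridge functions: if $\bz_1,\dots,\bz_k\in\R^n$ are pairwise linearly independent (which forces $n\ge 2$ once $k\ge 2$) and $g_1,\dots,g_k:\R\to\R$ are continuous with $\sum_{\ell=1}^k g_\ell(\langle\bz_\ell,\bx\rangle)=0$ for all $\bx\in\R^n$, then each $g_\ell$ is a polynomial (of degree at most $k-2$). This is proved by choosing a direction $\bb\perp\bz_k$ with $\langle\bz_i,\bb\rangle\ne 0$ for $i<k$, taking a finite difference along $\bb$ to annihilate the $g_k$ term, and inducting on $k$, using that a continuous function on $\R$ all of whose first differences are polynomials of bounded degree is itself a polynomial. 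This is precisely the step that consumes the hypothesis $n>1$: with $n=1$ one cannot even produce two non‑proportional directions, which is consistent with the lemma excluding that case.

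With this in hand the construction is short. Since $n\ge 2$, pick pairwise non‑proportional vectors $\bv_1,\dots,\bv_p\in\R^n$ and set $f=\sum_{i=1}^p\psi_{\rho,\bv_i}\in V_{\rho,p}$. Suppose, for contradiction, $f\in V_{\rho,p-1}$, say $f=\sum_{j=1}^{p-1}c_j\,\psi_{\rho,\bu_j}$. Then
\[
\sum_{i=1}^{p}\psi_{\rho,\bv_i}\;-\;\sum_{j=1}^{p-1}c_j\,\psi_{\rho,\bu_j}\;=\;0\qquad\text{on }\R^n .
\]
Partition the directions $\bv_1,\dots,\bv_p,\bu_1,\dots,\bu_{p-1}$ into proportionality classes with unit representatives $\bz_1,\dots,\bz_k$; collecting, for each class $\bz_\ell$, the terms $\rho(\mu_i\,\langle\bz_\ell,\bx\rangle)$ coming from the $\bv_i$'s and $\bu_j$'s proportional to $\bz_\ell$ yields continuous functions $g_\ell$ with $\sum_\ell g_\ell(\langle\bz_\ell,\bx\rangle)=0$. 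The $\bv_i$ occupy $p$ distinct classes, while the $p-1$ directions $\bu_j$ lie in at most $p-1$ classes; hence some class contains exactly one of the $\bv_i$, say $\bv_{i_0}$, and none of the $\bu_j$. On that class $g_\ell(s)=\rho(\mu s)$ for a scalar $\mu\ne 0$. Applying the rigidity theorem (note $k\ge p\ge 2$), $g_\ell$ is a polynomial, hence so is $\rho$ — contradiction. Therefore $f\notin V_{\rho,p-1}$, so $V_{\rho,p-1}\subsetneq V_{\rho,p}$ for all $p$ and $\dim_*(\rho,n)=\infty$.

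The only real obstacle is the ridge‑function rigidity lemma; the rest is bookkeeping (the proportionality‑class grouping and the pigeonhole on the number of neurons), and one can alternatively quote the rigidity statement from the ridge‑function literature rather than reprove it. The assumption $\rho\in L^2(\R,e^{-x^2/2}\,dx)$ is not needed for the argument above, which lives entirely at the level of genuine functions; it merely places $\rho$ in the ambient framework, and if one wants the $\dim_*(\rho,\bX)$ reformulation it guarantees the strict inclusions persist in some $L^2_\bX$ for an admissible $\bX$ (e.g.\ compactly supported), via the local version of the rigidity lemma (vanishing on an open set already forces each $g_\ell$ to be a polynomial there).
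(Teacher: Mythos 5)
Your proof is correct, but it takes a genuinely different route from the paper's. The paper's proof takes $\bX\sim N(\bzero,\bI)$, expands $\rho$ in Hermite polynomials, and uses the identity $\expval\abs{\Phi(\bX;\btheta)}^2=\sum_{k}\hat{\rho}_k^2\,\|\sum_{i}u_i\bw_i^{\otimes k}\|_F^2$ (Mondelli--Montanari) to reduce strictness of the inclusions $V_{\rho,p-1}\subsetneq V_{\rho,p}$ to the unboundedness of the maximal symmetric tensor rank $\rk_\mathrm{S}(k,n)$ over degrees $k$ with $\hat{\rho}_k\neq 0$; this is where the hypothesis $\rho\in L^2(\mathbb{R},e^{-x^2/2}\,dx)$ and the condition $n>1$ enter, and it ties the lemma to the symmetric-rank machinery of Lemma \ref{lemma:lid}. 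You instead argue pointwise, via the classical rigidity theorem for vanishing sums of ridge functions (pairwise linearly independent directions force each continuous profile to be a polynomial of degree at most $k-2$), combined with a pigeonhole over proportionality classes; this is elementary, correctly sketched (finite differences along a direction orthogonal to one ridge direction, induction, and the Fr\'echet-equation fact that continuous functions with polynomial differences of bounded degree are polynomials), it dispenses with the Gaussian $L^2$ hypothesis, and it uses $n>1$ exactly where it must. What the paper's Hermite/tensor argument buys in exchange is the stronger metric statement of Remark \ref{remark:dense_lower_dimension}: the same norm identity shows each $V_{\rho,p}$ is closed in $L^2_\bX$, hence $V_{\rho,p-1}$ is not even \emph{dense} in $V_{\rho,p}$, and it is this non-density (for the positive cones) that Theorem \ref{theo:main_counter_ex} and Corollary \ref{cor: spurious generic} actually consume; your pointwise strict inclusion does not by itself give that, and your closing parenthetical about a local rigidity lemma would need the closedness/non-density step to be spelled out. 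For the lemma as stated, your argument is complete modulo the quoted rigidity result; the only cosmetic loose end is the case $\bu_j=\bzero$ (a constant term), which can be absorbed into any proportionality class without affecting the conclusion.
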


The proof of the above Lemma is based on Hermite decomposition and on the correspondence between one-hidden-layer nets and symmetric tensors \citep{mondelli2018connection}.

\section{Finite intrinsic dimension and absence of spurious valleys}\label{sec:no spurious valleys}

In this section we provide our positive results. Essentially they state that if the width of the network matches the dimension of the functional space $V_\sigma$ spanned by its filter functions, then no spurious valleys exist. We first provide the main result (Theorem \ref{master_theorem}) in a general form, which allows a straight-forward derivation of two cases of interest: empirical risk minimization (Corollary \ref{cor:erm}) and polynomial activations (Corollary \ref{cor:poly}).  

\begin{theorem}\label{master_theorem}
For any continuous activation function $\sigma$ and r.v. $\bX\in \mathcal{R}_2(\sigma,n)$ with finite upper intrinsic dimension $\mathrm{dim}^*(\sigma,\bX)<\infty$, the loss function $$L(\btheta) = \expval\parq*{\ell\parr*{\bPhi(\bX;\btheta),\bY}}$$
for one-hidden-layer NNs $\bPhi(\bx;\btheta) = \bU\sigma( \bW\bx)$ admits no spurious valleys in the over-parame- trised regime $p\geq \mathrm{dim}^*(\sigma,\bX)$.
\end{theorem}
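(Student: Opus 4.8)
The plan is to verify property~\ref{p:conn} directly: from an arbitrary initial parameter $\tilde\btheta=(\tilde\bU,\tilde\bW)\in\Theta$ I will build a continuous, non-increasing path to a global minimizer. Write $d=\dim^*(\rho,\bX)<\infty$ and fix $\bv_1,\dots,\bv_d\in\R^n$ so that $\{\psi_{\rho,\bv_j}\}_{j=1}^d$ is a basis of $V_\rho$ viewed as a subspace of $L^2_\bX$ (possible precisely because $V_\rho$ is the span of the filters and has dimension $d$). The first, purely structural, observation is that the whole optimization takes place inside the fixed $dm$-dimensional space $(V_\rho)^m\subset(L^2_\bX)^m$: every coordinate of $\bPhi(\cdot;\btheta)=\bU\rho(\bW\,\cdot)$ is a finite linear combination of filters $\psi_{\rho,\bw_i}$, so $\bPhi(\cdot;\btheta)\in(V_\rho)^m$ for all $\btheta$; conversely, since $p\ge d$, taking $\bW$ to contain the rows $\bv_1,\dots,\bv_d$ (padding the remaining $p-d$ rows arbitrarily) realizes all of $(V_\rho)^m$. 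Hence $\min_\btheta L(\btheta)=\min_{\mathbf{F}\in(V_\rho)^m}\E[\ell(\mathbf{F}(\bX),\bY)]$; I will assume this minimum is attained (it is for the square loss) at some $\btheta^*$, and set $\mathbf{F}^*=\bPhi(\cdot;\btheta^*)$.

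The key quantity is the \emph{filter rank} of a weight matrix, $r(\bW)=\dim_{L^2_\bX}\mathrm{span}\{\psi_{\rho,\bw_i}\}_{i=1}^p\le d$. The ``rich weights'' base case is $r(\tilde\bW)=d$: then the image of the linear map $\bU\mapsto\bU\rho(\tilde\bW\,\cdot)$ is all of $(V_\rho)^m$, so $\bU\mapsto L(\bU,\tilde\bW)$ is convex (a convex $\ell$ precomposed with a linear map and then averaged) and its minimum equals $\min L$, attained at any $\bU$ with $\bU\rho(\tilde\bW\,\cdot)=\mathbf{F}^*$. Moving $\tilde\bU$ to such a $\bU$ along the straight segment is then a non-increasing path to a global minimizer, because a convex function on $[0,1]$ whose minimum over $[0,1]$ is attained at the right endpoint is non-increasing.

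For the general case $r(\tilde\bW)=:r<d$, the plan is to raise the filter rank to $d$ one unit at a time along constant-loss moves, then finish by the base case. Given a weight matrix $\bW$ of filter rank $r'<d$ (hence $r'<p$ since $p\ge d$), choose $S\subset[p]$ with $|S|=r'$ indexing a basis of $\mathrm{span}\{\psi_{\rho,\bw_i}\}$; rewriting the current function $\tilde\mathbf{F}:=\bPhi(\cdot;(\tilde\bU,\bW))$ in terms of these $r'$ filters yields a weight matrix $\bU''$, supported on the columns in $S$, with $\bPhi(\cdot;(\bU'',\bW))=\tilde\mathbf{F}$. Since $\tilde\bU$ and $\bU''$ realize the same function with the same $\bW$, the segment joining them has constant loss. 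Pick any $i_0\in[p]\setminus S$ (nonempty as $r'<p$); the $i_0$-th column of $\bU''$ vanishes, so the $i_0$-th neuron is inert and we may translate $\bw_{i_0}$ continuously to any $\bv$ with $\psi_{\rho,\bv}\notin\mathrm{span}\{\psi_{\rho,\bw_i}:i\neq i_0\}$ — such $\bv$ exists because that span has dimension $r'<d=\dim V_\rho$ — without changing $\tilde\mathbf{F}$, hence at constant loss; the resulting matrix has filter rank $r'+1$. Iterating $d-r$ times reaches filter rank $d$, and the base case supplies the final non-increasing descent to a global minimizer. Concatenating these finitely many continuous pieces gives a path witnessing~\ref{p:conn}, so $L$ has no spurious valleys.

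I expect the main obstacle to be the bookkeeping in the rank-increasing induction: checking at every stage that an inert neuron is available (this is exactly where $p\ge d$ is used) and that re-expanding the current function in a fresh filter sub-basis keeps us on a constant-loss segment. A secondary point is whether $\min L$ is attained; if it is not, one replaces ``connect to a global minimizer via a non-increasing path'' by ``connect within the sublevel set $\Omega_L(L(\tilde\btheta))$ to a minimizing sequence,'' which suffices for the absence of spurious valleys and goes through verbatim, since each segment above stays in $\Omega_L(L(\tilde\btheta))$ by convexity of $\ell$.
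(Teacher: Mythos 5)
Your proposal is correct and follows essentially the same strategy as the paper's proof: exploit linearity in the second layer to perform constant-loss moves that zero out redundant neurons and re-assign their first-layer weights until the filters span all of $V_\rho$ (the paper does this in one batch via the rank of the linearized feature matrix $\bpsi(\bW)$, you do it one rank unit at a time, which is only a cosmetic difference), and then descend along a straight segment in $\bU$ using convexity of $\ell$. Your explicit handling of non-attainment of the minimum is a reasonable extra caveat that the paper glosses over, but otherwise the argument is the same.
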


\paragraph{Sketch of the proof}
The proof consists of showing that we can construct a descent path verifying property \ref{p:conn} starting from any parameters $\btheta$. The construction can be articulated in two main parts. First, we show that we can map the starting parameter $\btheta_0 = (\bU_0,\bW_0)$ to another parameter $\btheta_{1/2} = (\bU_{1/2},\bW_{1/2})$ such that the functions $\bra*{\bx \mapsto \sigma(\prodscal{\bw_{1/2,i},\bx})}_{i\in[p]}$ form a basis of $V_\sigma$. It follows that there exists a minimal function $\bfun \in V_\sigma^m \doteq \bra*{ (f_1,\dots,f_m) \st f_i \in V_\sigma }$, i.e.
$$
\bfun \in \argmin_{\bg \in V_\sigma^m} \expval\parq*{ \ell\parr*{\bg(\bX),\bY} }
$$
which can be represented as $\bfun = \bPhi(\cdot;\btheta_1 = (\bU_1,\bW_{1/2}))$ for some $\bU_1$. The second part of the path can be thus taken as $t\mapsto (1-t)\bU_{1/2} + t\bU_1$: as the loss function is convex, this is descent path.

\paragraph{}

The above result can be interpreted as follows: if the network is such that any of its output units $\Phi_i$ can be chosen from the whole linear space spanned by its filter functions $V_\sigma$, then the associated optimization problem is such that there always exists a descent path to an optimal solution, for any initialization of the parameters.

Applying the observations in Section \ref{sec:dimension} describing the cases of finite intrinsic dimension, we immediately get the following corollaries.

\begin{corollary}[ERM]\label{cor:erm}
Consider $N$ data points $\bra{(\bx_i,\by_i)}_{i=1}^N \subset \mathbb{R}^n\times\mathbb{R}^m$. For one-hidden-layer NNs $\bPhi(\bx;\btheta) = \bU\sigma(\bW\bx)$, where $\sigma$ is any continuous activation function, the empirical loss function 
\begin{equation*}
L(\btheta) =\frac{1}{N} \sum_{i=1}^N \ell\parr*{\bPhi(\bx_i;\btheta),\by_i}    
\end{equation*}
admits no spurious valleys in the over-parametrized regime $p \geq N$.
\end{corollary}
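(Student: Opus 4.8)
The plan is to view the empirical risk as a special case of the oracle risk \eqref{eq:lossFun} and then invoke Theorem \ref{master_theorem} directly. Let $P_N = \frac{1}{N}\sum_{i=1}^N \delta_{(\bx_i,\by_i)}$ be the empirical joint distribution and let $(\bX,\bY)\sim P_N$; then $L(\btheta) = \expval\parq*{\ell\parr*{\bPhi(\bX;\btheta),\bY}}$, and the marginal law of $\bX$ is the discrete measure $\mu = \frac{1}{N}\sum_{i=1}^N \delta_{\bx_i}$, whose support $S = \bra{\bx_1,\dots,\bx_N}$ has cardinality at most $N$.

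First I would verify that $\bX \in \mathcal{R}_2(\rho,n)$: since $\rho$ is continuous and $S$ is finite, each filter function $\psi_{\rho,\bv}$ takes only finitely many finite values on $S$, hence $\psi_{\rho,\bv}\in L^2_\bX$ for every $\bv\in\R^n$. Next I would bound the upper intrinsic dimension. The space $L^2_\bX$ is isometrically isomorphic to $\R^{|S|}$ equipped with the inner product weighted by the atom masses of $\mu$, so $\dim(L^2_\bX) = |S| \le N$; consequently $\dim^*(\rho,\bX) = \dim_{L^2_\bX}\parr*{V_\rho} \le \dim(L^2_\bX) \le N$, exactly as observed after Definition \ref{def:uid}. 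The hypothesis $p\ge N$ then yields $p \ge \dim^*(\rho,\bX)$, so Theorem \ref{master_theorem} applies and the empirical loss admits no spurious valleys.

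There is essentially no real obstacle here; the statement is a direct instantiation of the master theorem. The only point requiring a moment of care is that the data points $\bx_i$ need not be distinct, but coincidences only shrink $|S|$ and therefore preserve the bound $\dim(L^2_\bX)\le N$, so the argument goes through unchanged.
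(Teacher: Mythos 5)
Your proof is correct and follows exactly the paper's intended argument: the paper derives Corollary \ref{cor:erm} as an immediate consequence of Theorem \ref{master_theorem} via the observation after Definition \ref{def:uid} that a discrete input distribution on at most $N$ points gives $\dim^*(\rho,\bX)\leq \dim(L^2_\bX)\leq N$. Your additional checks (that $\bX\in\mathcal{R}_2(\rho,n)$ and that repeated data points only help) are fine and consistent with that argument.
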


\paragraph{Comparison with existing results}

This results was already shown in \citep{livni2014computational}. The only difference with our result is that we allow for rank degeneracy in the matrix $\sigma\parr*{\bW \parq*{\bx_1 |\cdots|\bx_N}}$. However,
its proof illustrates the danger of studying empirical risk minimization landscapes in over-parametrised regimes, since it bypasses all the geometric and algebraic properties needed in the population risk setting - which may be more relevant to understand the generalization properties of the model.

Other works considered the landscape of empirical risk minimization for deep networks. For ReLu-like activations, multi-layer networks and square losses, \citep{soudry2016no} showed that (almost surely) there exists no differentiable spurious minima if one of the layer weights $\bW_i \in \R^{p_i\times p_{i-1}}$ satisfy $p_ip_{i-1}\geq N$. \citep{nguyen2017loss} showed that no spurious minima occur for multi-layer NNs for a class of losses and activations, if one of the layers inner width exceeds the number of data points and the critical points verify certain non-degeneracy conditions. 


\begin{corollary}[Polynomial activations]\label{cor:poly}
For one-hidden-layer NNs $\bPhi(\bx;\btheta) = \bU\sigma(\bW\bx)$ with polynomial activation function $\sigma(z) = a_0 + a_1z + \cdots + a_dz^d$, the loss function $L(\btheta) = \expval\parq*{\ell\parr*{\bPhi(\bX;\btheta),\bY}}$ admits no spurious valleys in the over-parametrized regime 
$$
p\geq \sum_{i=1}^d\binom{n + i -1}{i}\mathbf{1}_{\bra{a_i\neq 0}} = O(n^d)
$$
\end{corollary}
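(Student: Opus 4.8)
The plan is to derive Corollary \ref{cor:poly} directly from Theorem \ref{master_theorem}: the only thing to establish is that, for a degree-$d$ polynomial activation $\rho(z) = a_0 + a_1 z + \cdots + a_d z^d$, the upper intrinsic dimension $\dim^*(\rho,\bX)$ is bounded, uniformly over all admissible input distributions, by $d_\rho := \sum_{i=1}^d \binom{n+i-1}{i}\mathbf{1}_{\{a_i \neq 0\}}$. Once this is in hand, the hypothesis $p \geq d_\rho \geq \dim^*(\rho,\bX)$ of the overparametrised regime is met, $\dim^*(\rho,\bX)<\infty$, and Theorem \ref{master_theorem} applies verbatim to conclude.

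First I would unpack the filter functions. For polynomial $\rho$, each $\psi_{\rho,\bv}$ is a polynomial of degree at most $d$ on $\mathbb{R}^n$, with homogeneous decomposition $\psi_{\rho,\bv}(\bx) = \sum_{i=0}^d a_i \langle \bv, \bx\rangle^i$. The degree-$i$ component lies in the space $\mathcal{H}_i$ of homogeneous degree-$i$ polynomials in $n$ variables, and it appears only when $a_i \neq 0$; since $\dim \mathcal{H}_i$ equals the number $\binom{n+i-1}{i}$ of degree-$i$ monomials in $n$ variables, one gets $V_\rho \subseteq \bigoplus_{i:\,a_i\neq 0}\mathcal{H}_i$ and hence $\dim V_\rho \leq d_\rho$ (plus a possible additive constant from $i=0$, absorbed as in Lemma \ref{lemma:uid_when_infty}). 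This is exactly the polynomial case of Lemma \ref{lemma:uid_when_infty}. Restricting to $L^2_\bX$ can only lower dimension, so $\dim^*(\rho,\bX) = \dim_{L^2_\bX}(V_\rho) \leq \dim V_\rho \leq d_\rho$ for \emph{every} $\bX \in \mathcal{R}_2(\rho,n)$ — including discrete $\bX$, where $\dim L^2_\bX$ itself may be far larger than $d_\rho$. I would also note in passing that $\bX \in \mathcal{R}_2(\rho,n)$ (equivalently, $\bX$ has enough moments for all degree-$\leq d$ polynomials to lie in $L^2_\bX$) is precisely the integrability needed for the loss $L(\btheta) = \expval[\ell(\bPhi(\bX;\btheta),\bY)]$ to be well defined, so no extra hypothesis on $\bX$ is being smuggled in.

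It then remains to record the asymptotics: for fixed $i$, $\binom{n+i-1}{i} = O(n^i)$, so the sum defining $d_\rho$ is dominated by its top surviving term (at worst $i=d$), giving $d_\rho = O(n^d)$, as stated.

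I do not expect a genuine obstacle here. All the substantive content — that matching the hidden width to the dimension of the spanned functional space $V_\rho$ rules out spurious valleys — is packaged inside Theorem \ref{master_theorem}, which we may assume, and the dimension count is the elementary fact that $i$-th powers of linear forms span $\mathcal{H}_i$. The only point requiring a little care is that the bound on $\dim^*(\rho,\bX)$ must hold uniformly over all $\bX \in \mathcal{R}_2(\rho,n)$, which is why the argument routes through the "restriction decreases dimension'' remark rather than through Lemma \ref{lemma:uid_when_infty} as literally stated (the latter being phrased under $\dim L^2_\bX = \infty$).
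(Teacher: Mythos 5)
Your proposal is correct and follows essentially the same route as the paper: the corollary is obtained by bounding $\dim^*(\rho,\bX)$ via the inclusion $V_\rho\subseteq\bigoplus_{i:\,a_i\neq 0}\mathcal{H}_i$ (the polynomial case of Lemma \ref{lemma:uid_when_infty}, whose proof does not actually use $\dim L^2_\bX=\infty$) and then invoking Theorem \ref{master_theorem}. Your extra remarks — that the bound holds uniformly over all $\bX\in\mathcal{R}_2(\rho,n)$ since restriction to $L^2_\bX$ can only lower dimension, and that a constant term may add one to the count when $a_0\neq 0$ — only make explicit points the paper leaves implicit.
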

Under the hypothesis of Corollary \ref{cor:poly} with $p = O(n^d)$, a generic function of $V_\sigma$, $\bPhi(\bx;\btheta) = \bu^T\sigma(\bW \bx)$, can be also represented, for some $\bgamma = \bgamma(\btheta)$, in the generalized linear form 
$$
\bPhi(\bx;\btheta) = \prodscal{\bgamma, \bvarphi(\bx)}
$$
with $\bvarphi(\bx) = (x_{k_1}\cdots x_{k_j})_{\bra{1\leq k_1\leq \dots \leq k_j \leq n, j \in [d]}}$. The parameters $\btheta$ and $\bgamma$ differ for their dimensions:
$$
\dim(\bgamma) = O(n^d) < \dim(\btheta) = (n+1) \cdot O(n^d) = O(n^{d+1})
$$
One would therefore like Corollary \ref{cor:poly} to hold also (at least) for $p \geq O(n^{d-1})$. In the next section we address this problem for the linear activation $\sigma(z) = z$ and the quadratic activation $\sigma(z) = z^2$.


\subsection{Improved over-parametrization bounds for homogeneous polynomial activations}

The over-parametrization bounds obtained in Corollary \ref{cor:poly} are quite non-desiderable in practical applications. We show that they can indeed be improved, for the case of linear and quadratic networks. 

\subsubsection{Linear networks case}

Linear networks have been considered as a first order approximation of feed-forward multi-layers networks \citep{kawaguchi2016deep}. It was shown, in several works \citep{kawaguchi2016deep,brunatopology,yun2018small}, that, for linear networks of any depth
\begin{equation}\label{linear_NN_any_depth}
    \bPhi(\bx;\btheta) = \bW_{K+1}\cdots \bW_1 \bx
\end{equation}
with $\btheta = (\bW_{K+1},\bW_K,\dots,\bW_2,\bW_1)\in \mathbb{R}^{m\times p_{K}}\times \mathbb{R}^{p_{K}\times p_{K-1}}\times\cdots \mathbb{R}^{p_2\times p_1}\times \mathbb{R}^{p_1\times n}$, the loss function $\eqref{eq:lossFun}$ has no spurious local minima, if $\min_{i\in[K]} p_i \geq \min\bra{n,m}$. This corresponds exactly with over-parametrization regime in Corollary \ref{cor:poly}, for the case of one-hidden-layer networks. The following theorem improves on Corollary \ref{cor:poly} for the case of multi-layer linear networks, showing that no over-parametrisation is required in this case to avoid spurious valleys, for square loss functions.
\begin{theorem}[Linear networks]
\label{theo:linear}
For linear NNs \eqref{linear_NN_any_depth} of any depth $K\geq 1$ and of any layer widths $p_k \geq 1$, $k\in[K]$, and any input-output dimensions $n, m \geq 1$, the square loss function $L(\btheta) = \expval\norm*{\bPhi(\bX;\btheta)-\bY}^2$ admits no spurious valleys. 
\end{theorem}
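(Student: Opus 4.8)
The plan is to verify property \ref{p:conn}: from an arbitrary initial parameter $\tilde\btheta$ we build a continuous path to a global minimizer of $L$ along which $L$ is non-increasing.

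\emph{Step 1 (reduction to a convex problem in one matrix).} For a linear network $\bPhi(\bx;\btheta) = A(\btheta)\,\bx$ with $A(\btheta) = \bW_{K+1}\cdots\bW_1 \in \R^{m\times n}$, one has $L(\btheta) = g(A(\btheta))$ where $g(A) = \expval\norm*{A\bX - \bY}^2 = \tr(A\bSigma_\bX A^T) - 2\tr(A\bSigma_{\bX\bY}) + \expval\norm*{\bY}^2$ is a convex quadratic in $A$. The image of $\btheta\mapsto A(\btheta)$ is exactly the determinantal variety $\mathcal{M}_r = \{A\in\R^{m\times n} : \rk(A)\le r\}$, with $r := \min\{n,m,p_1,\dots,p_{K+1}\}$, so $\min_\Theta L = \min_{A\in\mathcal{M}_r} g(A) =: L^*$. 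We may assume $\bSigma_\bX \succ 0$ (otherwise restrict $\R^n$ to the range of $\bSigma_\bX$, which leaves the loss unchanged); then $g$ is coercive, the minimum is attained, and $g(A) = \norm*{A\bSigma_\bX^{1/2} - M}_F^2 + \mathrm{const}$ with $M = \bSigma_{\bY\bX}\bSigma_\bX^{-1/2}$, the map $A\mapsto A\bSigma_\bX^{1/2}$ being a rank-preserving linear bijection.

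\emph{Step 2 (reduction to two layers of width $r$).} Using the symmetries of the parametrization, move $\tilde\btheta$ along an $L$-constant path to a configuration whose intermediate factors $\bW_2,\dots,\bW_K$ are the canonical rank-$r$ ``pass-through'' matrices, so that $A = \bU\bV$ with $\bU\in\R^{m\times r}$, $\bV\in\R^{r\times n}$ the outer factors (inessential rows/columns discarded). It then suffices to handle the two-layer loss $f(\bU,\bV) = g(\bU\bV)$. If $r\ge\min(n,m)$ the rank constraint is vacuous, $\mathcal{M}_r = \R^{m\times n}$, and we are in the regime of Corollary \ref{cor:poly}; so assume $r < \min(n,m)$, the genuine bottleneck.

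\emph{Step 3 (three elementary moves).} (i) Holding $\bV$ fixed, $\bU\mapsto g(\bU\bV)$ is convex; move $\bU$ along the segment to a minimizer, which makes $L$ non-increasing. (ii) Now move $\bV$ while continuously re-setting $\bU := \argmin_\bU g(\bU\bV)$, so that the row space $R_t$ of $\bV$ (a point of the Grassmannian $\mathrm{Gr}(r,n)$, after enlarging $\mathrm{row}(\tilde\bV)$ to dimension $r$, which only lowers the loss) traces a path from $\mathrm{row}(\tilde\bV)$ to the optimal row space $R^\star := \mathrm{row}(A^*)$, where $A^*$ is a minimizer of $g$ over $\mathcal{M}_r$. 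Since $\{\,\bU\bV : \bU\in\R^{m\times r}\,\} = \{\,A : \mathrm{row}(A)\subseteq R_t\,\}$ when $\bV$ has full row rank, the loss along this path equals $\min\{g(A) : \mathrm{row}(A)\subseteq R_t\} = \mathrm{const} - \tr(\Pi_{\bSigma_\bX^{1/2}R_t} M^T M)$; minimizing this over $R_t$ is, after the bijection $R\mapsto\bSigma_\bX^{1/2}R$, exactly maximizing the PCA/Rayleigh functional $R'\mapsto\tr(\Pi_{R'}S)$, $S := M^T M \succeq 0$, over $\mathrm{Gr}(r,n)$. This functional admits a monotone ascent path to its global maximum from any initial subspace: flow along its gradient to a critical subspace (the span of an eigenvector subset of $S$), then perform successive $2$-plane eigenvector-swap rotations, each changing $\tr(\Pi_{R'}S)$ monotonically, up to the span of the top $r$ eigenvectors. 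Pulling this path back and lifting it to a continuous family of full-row-rank matrices $\bV_t$ with $\mathrm{row}(\bV_t) = R_t$, and taking $\bU_t := \argmin_\bU g(\bU\bV_t)$ (then unique and continuous), $L$ is non-increasing along it. (iii) At the end $\mathrm{row}(\bV_1) = R^\star$ and $\bU_1\bV_1 = \argmin\{g(A):\mathrm{row}(A)\subseteq R^\star\} = \min_{\mathcal{M}_r} g = L^*$: a global minimizer is reached. Lifting the concatenated path back through Steps 2 and 1 yields the required path in $\Theta$.

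\emph{Main obstacle.} The conceptual core is Step 3(ii): the claim that the PCA functional on the Grassmannian has monotone ascent paths to its global maximum is exactly what excludes spurious valleys in the underparametrized (bottleneck) regime, where no over-parametrization slack is available. The remaining ingredients are routine but need care: path-connectedness of the fibers of the iterated product map in Step 2 (delicate, since the fiber over a full-rank matrix is a $\mathrm{GL}(r)$-torsor and hence disconnected — one must check that the chosen normal form lies in the connected component of $\tilde\btheta$, or weaken it to one that always does); continuity of the $\argmin$-valued selections across changes of rank (handle by Tikhonov regularization with a vanishing parameter, or by confining the motion to strata of constant rank); and the reduction to $\bSigma_\bX\succ0$. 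I expect the combination of these with making the Grassmannian ascent path genuinely continuous — rather than piecewise through critical configurations, which may be non-isolated when $S$ has repeated eigenvalues — to be the most delicate part.
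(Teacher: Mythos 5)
Your two-layer engine is essentially the paper's: reduce to the composed matrix, take the second layer in closed form (the paper's $\bq(\bW)$ from Lemma \ref{lemma:best_solution_regression}), whiten $\bX$ (and project onto the range of $\bSigma_\bX$ in the degenerate case, exactly Lemma \ref{lemma:sigmaX_ass}), and drive the row space of the first layer through the Grassmannian so that the PCA functional $R\mapsto\tr(\Pi_R\bM)$ increases monotonically to its maximum. The crux you flag in Step 3(ii) is precisely the paper's Lemma \ref{lemma:grass}, which constructs the monotone path explicitly and elementarily: one rotates a single orthonormal basis vector at a time toward the corresponding top eigenvector along a sphere geodesic, so no gradient flow and no delicacy with repeated eigenvalues is needed; Lemma \ref{lemma:lifting_the_path} then lifts the Grassmannian path to full-row-rank matrices $\bW_t$, with $\bU_t=\bq(\bW_t)$ continuous in closed form, so your argmin-continuity and rank-stratification worries are avoided by keeping $\rk(\bW_t)=p$ throughout (after an initial loss-constant rank-raising move as in Step 1 of the proof of Theorem \ref{master_theorem}).

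Where you genuinely diverge, and where your sketch has a real gap, is the depth reduction (your Step 2). The claim that ``using the symmetries of the parametrization'' one can move $\tilde\btheta$ along an $L$-constant path so that all intermediate factors become canonical rank-$r$ pass-through matrices is not justified: the layer-wise symmetries $(\bW_{i+1},\bW_i)\mapsto(\bW_{i+1}\bG^{-1},\bG\bW_i)$ preserve the rank of every factor, so they cannot promote a rank-deficient middle factor; and promoting it while keeping the product (hence the loss) unchanged can be outright impossible, e.g.\ for three layers with both outer factors of full rank $r$ and a rank-deficient middle factor, since then any perturbation of the middle factor changes the product. One would need a genuinely non-increasing (not loss-constant) interleaved construction, which is exactly the part you leave open together with the disconnected-fiber issue. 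The paper sidesteps all of this: it never normalizes the initial configuration. It groups the layers into two blocks at the bottleneck width $p_s=\min_k p_k$, runs the two-layer argument on the composed matrices $(\hat\bW^2,\hat\bW^1)$ starting from their initial values, and then invokes Lemma \ref{lemma:matrix_fac}, which continuously factorizes any path of products into paths of the individual factors starting from the given initial factors (possible because every intermediate width is at least the bottleneck width). That factorization lemma is the missing ingredient in your Step 2; with it, no fiber ever has to be traversed and the connected-component question disappears.
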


\subsubsection{Quadratic networks case}\label{sec:quad_nets_no_valleys}

Quadratic activations $\sigma(z) = z^2$ have been considered in the literature \citep{livni2014computational,du2018power,soltanolkotabi2017theoretical} as second order approximation of general non-linear activations.
Corollary \ref{cor:poly} says that, if $p \geq n(n+1)/2$, the loss function \eqref{eq:lossFun} admits no spurious valleys. In the following theorem we relax the over-parametrisation requirement and show that $p > 2n$ is sufficient for the statement to hold, in the case of square loss functions and one dimensional output ($m=1$). 

\begin{theorem}[Quadratic networks]\label{theo:quadratic_p_geq_n}
For one-hidden-layer NNs $\Phi(\bx;\btheta) = \bu^T\sigma(\bW\bx)$ with quadratic activation function $\sigma(z) = z^2$ and one-dimensional output ($m=1$), the square loss function $L(\btheta) = \expval\abs*{\Phi(\bX;\btheta) - Y}^2$ admits no spurious valleys in the over-parametrised regime $p \geq 2n+1 = O(n)$. 
\end{theorem}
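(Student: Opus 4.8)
The plan is to exploit that, for the quadratic activation with scalar output, the network output depends on the parameters only through a symmetric matrix, which makes the loss a \emph{convex} function of that matrix; the construction of a descent path then splits into a trivial straight‑line segment in matrix space and a careful lift of that segment back to parameter space.

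\textbf{Reparametrization.} Since $\rho(z)=z^2$ and $m=1$, one has
$\Phi(\bx;\btheta)=\sum_{j=1}^p u_j\langle\bw_j,\bx\rangle^2=\bx^TM_\btheta\bx$, with
$M_\btheta=\sum_{j=1}^p u_j\bw_j\bw_j^T\in\mathrm S^2(\R^n)$, so that $L(\btheta)=\widetilde L(M_\btheta)$ where $\widetilde L(\bM)=\expval\abs{\bX^T\bM\bX-Y}^2$ is a convex quadratic functional on the finite‑dimensional space $\mathrm S^2(\R^n)$. As a finite‑dimensional linear least‑squares problem, $\widetilde L$ attains its minimum at some $\bM^*$ (the normal equations are always solvable). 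Moreover $\btheta\mapsto M_\btheta$ is onto $\mathrm S^2(\R^n)$: an eigendecomposition realizes any symmetric matrix with at most $n\le p$ signed rank‑one terms; hence $\bM^*$ corresponds to an actual global minimizer of $L$.

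\textbf{The segment and its lift.} Given an arbitrary initial $\tilde\btheta$, set $\tilde{\bM}=M_{\tilde\btheta}$ and consider $\bM_t=(1-t)\tilde{\bM}+t\bM^*$. Since $\widetilde L$ is convex with global minimum at $\bM^*=\bM_1$, the restriction $t\mapsto\widetilde L(\bM_t)$ is convex on $[0,1]$ and attains its minimum at $t=1$, hence is non‑increasing. It therefore suffices to produce a continuous lift $t\mapsto\btheta_t$ with $M_{\btheta_t}=\bM_t$ whose starting point $\btheta_0$ is reachable from $\tilde\btheta$ along a loss‑constant path. The engine for the lift is a continuous ``signed Cholesky'' decomposition: every $\bM\in\mathrm S^2(\R^n)$ splits as $\bM=\bM^+-\bM^-$ with $\bM^\pm\succeq0$, and $\bM^+=\sum_{j=1}^n\br_j^+(\br_j^+)^T$ where $\br_1^+,\dots,\br_n^+$ are the columns of the positive semidefinite square root $(\bM^+)^{1/2}$ (and similarly for $\bM^-$). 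Because $\bM\mapsto(\bM^2)^{1/2}$, and hence $\bM\mapsto\bM^\pm\mapsto(\bM^\pm)^{1/2}$, is continuous, the path $\bM_t$ admits the continuous realization $\bM_t=\sum_{j=1}^n\br_j^+(t)\br_j^+(t)^T-\sum_{j=1}^n\br_j^-(t)\br_j^-(t)^T$ using $2n$ hidden units whose output weights are \emph{held fixed} at $+1$ ($n$ of them) and $-1$ ($n$ of them); the remaining $p-2n\ge1$ units are kept at weight $0$. Note that post‑multiplying $(\bM_t^\pm)^{1/2}$ by an arbitrary fixed orthogonal matrix still yields a valid realization, a freedom we use below to match the orientation of $\btheta_0$.

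\textbf{Reaching the canonical starting configuration.} It remains to connect $\tilde\btheta$ to a configuration $\btheta_0$ of the canonical type above — exactly $n$ weight‑$(+1)$ units realizing $\tilde{\bM}^+$ via the columns of $(\tilde{\bM}^+)^{1/2}$, exactly $n$ weight‑$(-1)$ units realizing $\tilde{\bM}^-$, and the rest at weight $0$ — by a path along which $M_\btheta\equiv\tilde{\bM}$, hence $L$ is constant. This is achieved by composing elementary loss‑preserving moves, each leaving $\sum_j u_j\bw_j\bw_j^T$ fixed: (i) normalizing a unit with $u_j\neq0$ to $u_j=\pm1$ by continuously transferring scale between $u_j$ and $\bw_j$; (ii) for a unit with $u_k=0$, freely rotating $\bw_k$, aligning it with some $\bw_j$, and then splitting $u_j\bw_j\bw_j^T$ between units $j$ and $k$; (iii) creating a weight‑$(\pm1)$ unit with zero direction out of a weight‑$0$ unit; (iv) consolidating a block $S$ of like‑signed ($u_j=\sigma$ fixed) units with $\sum_{j\in S}\bw_j\bw_j^T=A$ by moving the rows along the fiber $\{W\in\R^{|S|\times n}:W^TW=A\}$, which is a Stiefel‑type set that is path‑connected whenever $|S|>\mathrm{rk}(A)$. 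Since $\mathrm{rk}(A)\le n$, keeping $|S|$ up to $n+1$ during the consolidation of each block makes all these fibers connected — this is exactly where $p\ge 2n+1$ (rather than $p\ge 2n$) is used: one spare unit is needed as scratch while consolidating the $+$‑block, and is then recycled into the $-$‑block. Concatenating this loss‑constant path with the non‑increasing lift of the segment yields a path from $\tilde\btheta$ to a global minimizer of $L$ that never increases $L$, i.e.\ property \ref{p:conn}; hence $L$ has no spurious valleys.

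\textbf{Main obstacle.} The delicate part is the bookkeeping in the last paragraph: connecting an arbitrary realization of $\tilde{\bM}$ to the canonical one while $M_\btheta$ stays fixed, handling rank deficiencies of $\tilde{\bM}^\pm$ and the case where the initial numbers of positive/negative weight units are not already $\ge n$, and verifying connectivity of the relevant Stiefel‑type fibers — which is precisely what forces the extra unit beyond the $2n$ needed for the continuous realization itself.
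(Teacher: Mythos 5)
Your reparametrization step and the descent segment (convexity of $\widetilde L$ in $\bM$, existence of a minimizer $\bM^*$, monotonicity of $t\mapsto \widetilde L(\bM_t)$ along the segment, and the continuous lift via the positive/negative parts and their PSD square roots, using $2n$ units plus one idle unit) are correct, and this part is in the same spirit as the paper's Step 2. The genuine gap is exactly the step you defer in your last paragraph: the loss-constant path from an arbitrary $\tilde\btheta$ to the canonical realization of $\tilde{\bM}$ via $(\tilde{\bM}^{\pm})^{1/2}$. This is the heart of the theorem (the paper devotes Lemmas \ref{movinglemma}, \ref{mappinglemma} and \ref{ortholemma} to it, and it is the only place where $p\geq 2n+1$ is really needed), and your move set (i)--(iv) does not obviously produce it. After normalizing signs, the $+$ and $-$ blocks realize some PSD pair $(\bA^+,\bA^-)$ with $\bA^+-\bA^-=\tilde{\bM}$, and every move you list either preserves each block's Gram matrix $\bW_{\pm}^T\bW_{\pm}$ (consolidation along the fiber, rotating zero-weight units, recruiting zero units) or changes both Grams by a common \emph{aligned rank-one PSD} term (the split/merge move). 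To reach the canonical configuration you must change both Grams by the common matrix $C=\bA^+-\tilde{\bM}^+=\bA^--\tilde{\bM}^-$, and $C$ is in general \emph{indefinite} (the Loewner order is not a lattice: $\bA^+\succeq 0$ and $\bA^+\succeq \tilde{\bM}$ do not imply $\bA^+\succeq\tilde{\bM}^+$), so there is no ``common PSD part'' to cancel directly. One would have to first add matched $\pm$ pairs realizing the negative part of $C$ and then cancel the positive part, and each phase requires reorganizing a block into an explicit aligned decomposition with enough spare units and connected fibers at every intermediate stage; whether this can always be scheduled within the budget $p=2n+1$ (for arbitrary initial sign counts $p_+,p_-$ and arbitrary rank deficiencies) is precisely the nontrivial combinatorial/geometric content, and your proposal asserts it (``one spare unit \dots is then recycled'') rather than proves it.

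For comparison, the paper's constant-loss connection does not go through Gram-matrix interpolation at all: it maps rows to eigenvectors of $\bA=\sum_k u_k\bw_k\bw_k^T$ one at a time. Since $p_++p_->2n$, one sign block has more than $n$ rows, so an $SO(p_\pm)$ rotation acting on that block (which leaves $\bA$, hence the loss, invariant) can zero out one row; the freed unit is sent to the desired eigenvector, the remaining rows are orthogonalized against it with a compensating continuous adjustment of $u_{1,t}$ so that $\bA$ stays fixed (Lemma \ref{ortholemma}), and one recurses on the orthogonal complement. If you want to complete your argument along your own lines, you must either carry out the add-then-cancel scheduling with explicit unit counts and fiber-connectivity checks at every stage, or replace your step 3 by an inductive construction of this kind; as written, the proposal proves the easy half and leaves the hard half as a sketch.
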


\paragraph{Sketch of the proof} 
The proof (reported in Section \ref{sec:no spurious valleys proofs}) consists in constructing a path satisfying property \ref{p:conn} and improves upon the proof of Theorem \ref{master_theorem} by leveraging the special linearized structure of the network for quadratic activation. For every parameter $\btheta = (\bu,\bW) \in \mathbb{R}^{p}\times \mathbb{R}^{p\times n}$, we can write
$$
\Phi(\bx;\btheta) = \sum_{i=1}^p u_i (\prodscal{\bw_i,\bx})^2 = \prodscal[\Big]{\sum_{i=1}^p u_i\bw_i\bw_i^T,\bx\bx^T}_F 
$$
We notice that $\Phi(\cdot;\btheta)$ can also be represented by a NN $\Phi(\cdot;\hat{\btheta})$ with 
$n$ hidden units; indeed, if $\sum_{i=1}^n \sigma_i \bv_i\bv_i^T$ is the SVD of $\sum_{i=1}^p u_i\bw_i\bw_i^T$, then $\Phi(\bx;\btheta) = \prodscal{\sum_{i=1}^n \sigma_i \bv_i\bv_i^T,\bx\bx^T}_F$. 
Therefore $p\geq n$ is sufficient to describe any element in $V_\sigma$. 
A path to the symmetric matrix defining the optimal network is then constructed by mapping the above decomposition defined by the standard form of the network.

\paragraph{}
The factor $2$ in the statement is due to some technicalities in the proof, but a more involved proof should be able to extend the result to the regime $p\geq n$. The extension of such mechanism for higher order tensors (appearing as a result of multiple layers or high-order polynomial activations) using tensor decomposition also seems possible and is left for future work.

\paragraph{Comparison with previous works}

The same optimization landscape has been considered in the works \citep{soltanolkotabi2017theoretical} and \citep{du2018power}. In the first work, the authors show absence of spurious minima for the case of $p\geq 2 n$ and of ERM (loss evaluated on $N$ data points), but for fixed output layer weights; under some assumption on the output layer weights, the result is shown to still hold for $p\geq n$, if $n\leq N \leq O(n^2)$. This last condition can be removed by considering the regularized loss with non-zero weight decay, as shown in \citep{du2018power}; in the same work, the authors also proved absence of spurious minima in the case $p<n$ and $p(p+1)\geq 2N$ for a  randomly regularized loss (with high probability).

By relaxing the statement to absence of spurious valleys, we showed that this holds for the square loss (both in population and ERM setting) and the optimisation problem over both layer weights if $p>2n$.

\subsubsection{Lower to upper intrinsic dimension gap}

As observed in Lemma \ref{lemma:lid} $\dim_*(\sigma(z)=z,n) = 1$ and $\dim_*(\sigma(z)=z^2,n) = n$ for all integer $n\geq 1$. Therefore, Theorem \ref{theo:linear} and Theorem \ref{theo:quadratic_p_geq_n} say that, for $\sigma(z) = z^k$, $k \in [2]$, and $m=1$, the square loss function $L(\btheta) = \expval\abs*{\Phi(\bX;\btheta) - Y}^2$ admits no spurious valleys in the over-parametrized regime $p \geq O(\dim_*(\sigma,n))$. We conjecture that this hold for any (sufficiently regular) activation function with finite intrinsic lower dimension.

\section{Infinite intrinsic dimension and presence of spurious valleys}\label{sec:spurious valleys exist}

This section is devoted to the construction of worst-case scenarios for non-over parametrised networks. The main result (Theorem \ref{theo:main_counter_ex}) essentially states that, for networks with width smaller than the lower intrinsic dimension defined above, spurious valleys can be created by choosing adversarial data distributions. We then show how this implies negative results for under-parametrized polynomial architectures and a large variety of architectures used in practice.

\begin{theorem}\label{theo:main_counter_ex}
Consider the square loss function $L(\btheta) = \expval\norm{\bPhi(\bX;\btheta)-\bY}^2$ for one-hidden-layer NNs $\bPhi(\bx;\btheta) = \bU\sigma(\bW\bx)$ with non-negative activation function $\sigma \geq 0$ such that $\sigma\in L^2(\mathbb{R},e^{-x^2}\,dx)$. If $p \leq \frac{1}{2}\mathrm{dim}_*(\sigma,n-1)$, then there exists a r.v. $(\bX,\bY)$ such that the square loss function $L$ admits spurious valleys. In particular, for any given $M>0$, the r.v. $\bY$ can be chosen in such a way
that there exists a (non-empty) open set $\Omega\subset\Theta$ such that
\begin{equation}\label{eq:bad_valley}
M/2+\min_{\btheta \in \Omega} L(\btheta)  \geq \sup_{\btheta\in\Omega}L(\btheta) \geq \min_{\btheta \in \Omega} L(\btheta) \geq M + \min_{\btheta \in \Theta}L(\btheta)    
\end{equation}
and any path $\btheta:[0,1]\to \Theta$ such that $\btheta_0 \in \Omega$ and $\btheta_1$ is a global minima verifies
\begin{equation}\label{eq:tall_valley}
\max_{t\in[0,1]}L(\btheta_t) \geq \min_{\btheta\in\Omega} L(\btheta) + M    
\end{equation}
\end{theorem}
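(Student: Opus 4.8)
The plan is to manufacture an adversarial pair $(\bX,\bY)$ for which the regression target, viewed as an element of $L^2_\bX$, genuinely needs strictly more than $p$ hidden units, while a width-$p$ network can nonetheless reproduce two ``incompatible halves'' of it; the two families of near-optimal parameters will lie in distinct connected components of a suitable sub-level set, and the free constant $M$ will be produced by rescaling the target. Since the statement only quantifies over $\bY$ once $M$ is given, $\bX$ will be fixed first (to encode the hardness) and the scale of $\bY$ chosen afterwards.

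\emph{Step 1 (the hard instance and the data distribution).} By hypothesis $2p\le\dim_*(\rho,n-1)$, so Definition~\ref{def:lid} furnishes an $(n-1)$-dimensional r.v.\ $\tilde\bX$ with $V_{\rho,2p-1}\subsetneq_{L^2_{\tilde\bX}}V_{\rho,2p}$; fix $g^\star=\sum_{i=1}^{2p}c_i\,\psi_{\rho,\bv_i}\in V_{\rho,2p}\setminus V_{\rho,2p-1}$ and split it as $g^\star=g_1+g_2$ into two $p$-term halves $g_1,g_2\in V_{\rho,p}$. I would then adjoin the remaining input coordinate as a \emph{selector}, setting $\bX=(S,\tilde\bX)$ with $S$ a binary variable (values $s_1,s_2$) independent of $\tilde\bX$, so that on the slab $\{S=s_j\}$ a neuron $\bx\mapsto\rho(w^{(1)}x_1+\prodscal{\tilde\bw,\tilde\bx})$ of the network reduces to a filter in $\tilde\bx$ with a slab-dependent offset $s_jw^{(1)}$; the standing assumptions $\rho\ge0$ and $\rho\in L^2(\R,e^{-x^2}\,dx)$ guarantee $\bX\in\mathcal{R}_2(\rho,n)$ and let one drive a neuron's contribution on one slab to be negligible while keeping it active on the other. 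The target is defined slab-wise, $\bY=\lambda_j\,g_j(\tilde\bX)$ on $\{S=s_j\}$, with $\lambda_1>\lambda_2>0$ to be fixed.

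\emph{Step 2 (two basins and the inequality \eqref{eq:bad_valley}).} The key structural fact is that a width-$p$ network cannot drive the loss to zero: exact interpolation on both slabs would express $g^\star=g_1+g_2$ using the $2p$ slab-restrictions of only $p$ neurons, and the hypothesis $p\le\tfrac12\dim_*(\rho,n-1)$ is exactly what rules this out (this is the origin of the factor $\tfrac12$). On the other hand the network \emph{can} fit one slab alone --- devote all $p$ neurons to $g_j$ there and push them off the other slab --- producing two natural open families $\Omega_1,\Omega_2$ of ``commit to slab $j$'' parameters. Computing $\min_{\btheta\in\Omega_j}L(\btheta)$ (essentially the residual mass of the un-served slab) shows $\min_{\Omega_2}L-\min_{\Omega_1}L$ grows without bound in $\lambda_1$, giving \eqref{eq:bad_valley} once $\lambda_1$ is large; a check that no configuration does better than committing to the heavier slab places a global minimiser inside $\Omega_1$.

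\emph{Step 3 (the barrier \eqref{eq:tall_valley}, and the main obstacle).} To establish \eqref{eq:tall_valley} I would exhibit a continuous functional $\Psi:\Theta\to\R$ that separates the two basins --- e.g.\ a difference of projections of $\bPhi(\cdot;\btheta)$ restricted to the two slabs onto fixed test directions aligned with $g_1$ and $g_2$ --- so that $\Psi$ takes opposite signs, with a definite gap, on $\Omega_1$ and $\Omega_2$, and then argue that $L(\btheta)<\min_{\Omega_2}L+M$ forces $\Psi(\btheta)$ to remain on one side of the crossing value $0$; any path from $\Omega_2$ to a global minimiser must cross, hence must exit $\Omega_L(\min_{\Omega_2}L+M)$, which is exactly \eqref{eq:tall_valley}. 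The hard part --- the step I expect to be the real obstacle --- is this last uniform estimate: one must rule out that some continuous reallocation of the $p$ neurons shepherds the network between the basins while keeping the loss under the barrier, i.e.\ upgrade the qualitative ``$p$ neurons cannot serve both slabs'' to a quantitative lower bound on $L$ over the crossing set $\{\Psi=0\}$ that dominates $\min_{\Omega_2}L+M$. All three hypotheses enter here: the gap $p\le\tfrac12\dim_*(\rho,n-1)$ forbids serving both slabs simultaneously, $\rho\ge0$ prevents the two slab-contributions from cancelling and keeps the off-slab term controlled, and $\rho\in L^2(e^{-x^2}\,dx)$ makes every $L^2_\bX$ quantity finite and continuous in $\btheta$; it is plausible that making the barrier tall enough requires tuning the slab probabilities, the offsets $s_j$ and the scales $\lambda_j$ together, or using more than two selector values.
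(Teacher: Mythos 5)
Your construction does not close the decisive step, and you say so yourself: the barrier estimate \eqref{eq:tall_valley} is left as ``the real obstacle,'' but that estimate \emph{is} the theorem (two regions with different minimal values, as in your Step 2, do not by themselves produce a spurious valley). The ingredient you are missing is not a cleverer separating functional $\Psi$; in the paper's proof the topological invariant is simply the sign pattern of the second-layer weights $\bu$. Because $\rho\ge 0$, a network with all output weights nonnegative is a nonnegative combination of filters, and along any continuous path in $\Theta$ that flips the sign of some $u_i$ there is a time at which $u_i=0$, so the realized function lies in $V_{\rho,p-1}^+$, the set of width-$(p-1)$ networks with nonnegative output weights. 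The adversarial target is taken as $Y=g_1(\bX)-g_2(\bX)$, where $g_1$ is a \emph{positive} combination of $p$ filters in the first $n-1$ coordinates chosen at $L^2_\bX$-distance $\epsilon>0$ from $V_{\rho,p-1}^+$ (this is exactly where $p\le\tfrac12\dim_*(\rho,n-1)$ and the factor $\tfrac12$ enter, through the non-density of $V_{\rho,p-1}^+$ in $V_{\rho,p}^+$, proved in a separate remark), and $g_2$ is one large filter in the last coordinate entering with a minus sign. Fitting the $-g_2$ part forces a negative output weight; reaching that sign pattern from the all-positive region forces passage through $V_{\rho,p-1}^+$, whose loss exceeds the valley floor by $\epsilon$ up to explicit $\rho(0)$ constants, and $\epsilon$ is made larger than $M$ by rescaling $g_1$, while $\beta$ (the scale of $g_2$) is chosen to separate the minima as in \eqref{eq:bad_valley}. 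Your scheme of projecting onto test directions gives no uniform lower bound on the loss over the crossing set $\{\Psi=0\}$, and nothing in your construction supplies one: a width-$p$ network could in principle slide between your two ``commitments'' with moderate loss unless a structural obstruction of the above kind is exhibited.

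Two further points where your construction, as designed, would break. First, with the selector coordinate $\bX=(S,\tilde\bX)$ the restriction of a neuron to a slab is a \emph{biased} filter $\bx\mapsto\rho(s_jw^{(1)}+\prodscal{\tilde\bw,\tilde\bx})$, whereas $\dim_*(\rho,n-1)$ is defined through unbiased filters; so the claim that interpolating both slabs with $p$ neurons contradicts $p\le\tfrac12\dim_*(\rho,n-1)$ does not follow as stated. The paper avoids this by supporting the two mixture components on complementary coordinate subspaces ($\tilde\bX=Z\bar\bX$, $X_n=(1-Z)\bar X_n$ with $Z\sim\mathrm{Ber}(1/2)$), so that on one component each neuron restricts exactly to an unbiased filter in $n-1$ variables and on the other it contributes only the constant $\rho(0)$, which is absorbed into explicit constants. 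Second, ``pushing neurons off the other slab'' presumes $\rho$ can be made negligible there, which the hypotheses $\rho\ge0$, $\rho\in L^2(\mathbb{R},e^{-x^2}dx)$ do not guarantee (an activation bounded away from zero is admissible); the paper's argument never needs this. Finally, note that what the hypothesis must deliver is not merely $V_{\rho,p-1}\subsetneq V_{\rho,p}$ but a strictly positive distance (non-density) in $L^2_\bX$ for the \emph{nonnegative-weight} classes; that quantitative statement requires its own argument (closedness of these sets, or the symmetric-rank argument for $\rho(z)=z^{2k}$), and it is what makes the barrier uniform in $M$.
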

Equation \eqref{eq:bad_valley} in Theorem \ref{theo:main_counter_ex} says that any local descent algorithm, if initialized in $\btheta_0 \in \Omega$, at its best it will only be able to produce a final parameter value which is at least $M$ far from optimality. Equation \eqref{eq:tall_valley} implies that any path starting from parameter belonging to $\Omega$ must `up-climb' at least $M/2$ in the loss value. In the following we refer to such property, as stated in Theorem \ref{theo:main_counter_ex}, by saying that \emph{the loss function has arbitrarily bad spurious valleys}.
Note that this result ensures that spurious valleys have positive Lebesgue measure, so there is a positive probability that gradient descent methods initialized with a measure that is absolutely continuous with respect to Lebesgue will get stuck in a bad local minima.

Applying the observations describing the values of the lower intrinsic dimension for different activation functions, we get the following corollaries. 

\begin{corollary}[Homogeneous even degree polynomial activations]\label{cor: spurious polynomial}
Consider the case of activation $\sigma(z) = z^{2k}$ with $k\geq 1$ integer. For one-hidden-layer NNs $\bPhi(\bx;\btheta) = \bU\sigma(\bW\bx)$, if $n\geq 2$ and the hidden layer width satisfies
\begin{equation*}
p \leq \begin{cases}n-1 & \text{if } k=1\\ \frac{1}{2}\rk_\mathrm{S}(2k,n-1) & \text{if } k > 1\end{cases}    
\end{equation*}
then there exists a r.v. $(\bX,\bY)$ such that the square loss function $L(\btheta) = \expval\norm{\bPhi(\bX;\btheta)-\bY}^2$ has arbitrarily bad spurious valleys. 
\end{corollary}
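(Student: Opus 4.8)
The plan is to obtain the statement as a corollary of Theorem \ref{theo:main_counter_ex} whenever the generic bound suffices, and to sharpen the constant in the degenerate case $k=1$. First I would verify that $\rho(z) = z^{2k}$ satisfies the hypotheses of Theorem \ref{theo:main_counter_ex}: it is non-negative, and being a polynomial it certainly lies in $L^2(\mathbb{R}, e^{-x^2}\,dx)$. By Lemma \ref{lemma:lid} (applied with exponent $2k$), the relevant lower intrinsic dimension is $\dim_*(\rho, n-1) = \rk_\mathrm{S}(2k, n-1)$.

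For $k > 1$ the statement is then immediate: the hypothesis $p \leq \tfrac12 \rk_\mathrm{S}(2k, n-1) = \tfrac12\dim_*(\rho, n-1)$ is exactly the hypothesis of Theorem \ref{theo:main_counter_ex}, which hands us a random variable $(\bX,\bY)$ for which $L$ has arbitrarily bad spurious valleys, i.e. disjoint open sets $\Omega_1,\Omega_2$ satisfying \eqref{eq:bad_valley} and \eqref{eq:tall_valley}.

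For $k = 1$ we have $\rho(z) = z^2$ and $\dim_*(\rho, n-1) = \rk_\mathrm{S}(2, n-1) = n-1$, since the maximal symmetric rank of an order-$2$ tensor on $\mathbb{R}^{n-1}$ is simply the maximal rank of a symmetric $(n-1)\times(n-1)$ matrix. Plugging this into Theorem \ref{theo:main_counter_ex} would only give bad valleys for $p \leq (n-1)/2$, short of the claimed range $p \leq n-1$. To reach $p \leq n-1$ I would redo the construction behind Theorem \ref{theo:main_counter_ex} while exploiting the linearized description of quadratic networks used in the proof of Theorem \ref{theo:quadratic_p_geq_n}: for $\btheta = (\bu, \bW)$ with $p$ hidden units, $\Phi(\bx;\btheta) = \langle \bM(\btheta), \bx\bx^T\rangle_F$ where $\bM(\btheta) = \sum_{i=1}^p u_i \bw_i\bw_i^T$ is a symmetric matrix of rank at most $p$, and conversely every symmetric matrix of rank $\leq p$ arises this way. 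Hence, as a class of functions of $\bX$, $V_{\rho,p}$ is the image of the determinantal variety of rank-$\leq p$ symmetric matrices under $\bM \mapsto \langle \bM, \bx\bx^T\rangle_F$. I would then choose $\bX$ so that the quadratic moment map $\bM \mapsto \E[\langle \bM, \bX\bX^T\rangle_F^2]$ is nondegenerate on symmetric matrices, and $\bY = \langle \bM^\star, \bX\bX^T\rangle_F$ for a full-rank target $\bM^\star$ scaled by a large factor; the global minimum then forces rank $n$, while a sublevel set of $L$ contains a connected component made of parameters whose matrix $\bM$ has rank $\leq p \leq n-1$ with a fixed ``wrong'' signature, a component that cannot be continuously connected to the optimum inside that sublevel set.

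The step I expect to be the main obstacle is the path argument at the boundary $p = n-1$: one must show that every continuous path in $\Theta$ starting in the bad region $\Omega_2$ and ending at a global minimizer attains loss at least $\min_{\Omega_2} L + M$ somewhere along the way, as in \eqref{eq:tall_valley}. A path cannot jump rank discontinuously, but it can momentarily drop to a lower-rank (hence more constrained) matrix and come back, so the construction of $(\bX,\bY)$ has to be arranged so that any such rank-deficient detour -- or, more generally, any route out of $\Omega_2$ -- already costs at least $M$ in loss; controlling this uniformly over all paths, not merely over straight-line homotopies, is the quantitative heart of the argument. The non-negativity of $\rho$ (which leaves the sign of each $u_i$ free, so that $\bM$ ranges over all symmetric matrices of bounded rank rather than only the PSD ones) is precisely what lets the bad component be genuinely separated from the optimum; once the path lower bound is secured, \eqref{eq:bad_valley} and \eqref{eq:tall_valley} follow by scaling $\bY$, exactly as in Theorem \ref{theo:main_counter_ex}.
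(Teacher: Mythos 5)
Your $k>1$ case is exactly the paper's argument: $\rho(z)=z^{2k}$ is non-negative and square-integrable against the Gaussian weight, Lemma \ref{lemma:lid} gives $\dim_*(\rho,n-1)=\rk_\mathrm{S}(2k,n-1)$, and Theorem \ref{theo:main_counter_ex} applies verbatim. The gap is in your $k=1$ case. The paper does \emph{not} build a new construction there; it reuses the proof of Theorem \ref{theo:main_counter_ex} and observes that the only place the factor $\tfrac12$ enters is the requirement that $V_{\rho,p-1}^{+}$ (the cone of networks with non-negative outer weights) be non-dense in $V_{\rho,p}^{+}$ over the $(n-1)$-dimensional component of the input. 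For $\rho(z)=z^2$ this cone is isomorphic to the set of PSD matrices of rank at most $p$, and rank-$\le(p-1)$ PSD matrices are not dense in rank-$\le p$ PSD matrices for every $p\le n-1$, so the same adversarial pair $(\bX,\bY)$, the same sign-pattern regions $\Omega_1,\Omega_2$, and the same barrier estimate go through with no loss of the factor $2$ (this is the content of the remark following the proof of Theorem \ref{theo:main_counter_ex}).

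Your replacement construction for $k=1$ misses the mechanism that makes the barrier work. In the paper the non-negativity of $\rho$ is used to force any fit of the negative part $-g_2$ of the target to use a negative outer weight; a path from the all-positive region must then cross $u_i=0$ for some $i$, at which moment only $p-1$ non-negative units remain to fit $g_1$, and non-density supplies a quantitative loss $\epsilon$ that is scaled up to $M$. You instead take unconstrained signs, a full-rank target $\bM^\star=\langle\bM^\star,\bx\bx^T\rangle_F$, and claim a sublevel component of parameters whose matrix has a ``wrong signature'' cannot reach the optimum. No argument is given for that separation, and it is doubtful one exists: with free signs the reachable functions are exactly $\bM\mapsto\langle\bM,\bx\bx^T\rangle_F$ over symmetric $\bM$ of rank $\le p$, and for a (near-)Frobenius objective one can continuously shrink the wrongly-signed eigendirections to zero and regrow them with the correct sign, decreasing the loss throughout — there is no topological obstruction of the kind you invoke, and indeed you yourself flag the path lower bound as the unresolved ``main obstacle.'' So the $k=1$ half of your proof is not established; the fix is not a new construction but the PSD-cone non-density refinement of the existing one.
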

This follows by Theorem \ref{theo:main_counter_ex} and Lemma \ref{lemma:lid}, since $\mathrm{dim}_*(\sigma(z) = z^{2k},n) = \rk_\mathrm{S}(2k,n)$. 
For the well known case $k =1$ (symmetric matrices) it holds $\rk_\mathrm{S}(2,n) = n$; therefore Corollary \ref{cor: spurious polynomial} implies that the bound provided in Corollary \ref{cor:poly} is almost (up to a factor $2$) tight. 
Notice that our result is indeed in line with the results discussed in Section \ref{sec:quad_nets_no_valleys}.

\begin{corollary}[Spurious valleys exist in generic architectures]\label{cor: spurious generic}
If $n\geq 2$, for one-hidden-layer NNs $\bPhi(\bx;\btheta) = \bU\sigma(\bW\bx)$ with any hidden layer width $p \geq 1$ and continuous non-negative non-polynomial activation function $\sigma \in L^2(\mathbb{R},e^{-x^2/2})$, then there exists a r.v. $(\bX,\bY)$ such that the square loss function $L(\btheta) = \expval\norm{\bPhi(\bX;\btheta)-\bY}^2$ has arbitrarily bad spurious valleys. This setting includes the following activation functions: 
\begin{itemize}
\item The ReLU activation function $\sigma(z) = z_+$ and some relaxations of it, such as softplus activation functions $\sigma(z) = \beta^{-1}\log\parr*{1 + e^{\beta z}}$, with $\beta > 0$;
\item The sigmoid activation function $\sigma(z) = (1 + e^{-z})^{-1}$ and the approximating erf function $\sigma(z) = 2/\pi \int_0^z e^{-u}\, du$, which represents an approximation to the sigmoid function.
\end{itemize}
\end{corollary}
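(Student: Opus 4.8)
The plan is to deduce this purely as a corollary of Theorem~\ref{theo:main_counter_ex} and Lemma~\ref{lemma:infinite lower dimension}, the only work being to check hypotheses. First I would verify that any $\rho$ as in the statement meets the requirements of Theorem~\ref{theo:main_counter_ex}: continuity and $\rho\geq 0$ are assumed, and since $e^{-x^2}\leq e^{-x^2/2}$ for every $x$ we get $\int_{\mathbb R}\rho(x)^2 e^{-x^2}\,dx\leq \int_{\mathbb R}\rho(x)^2 e^{-x^2/2}\,dx<\infty$, so $\rho\in L^2(\mathbb R,e^{-x^2}\,dx)$ as needed. Thus Theorem~\ref{theo:main_counter_ex} applies to $\rho$, provided $p\leq \tfrac12\dim_*(\rho,n-1)$.

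Second, I would remove the width restriction by showing $\dim_*(\rho,n-1)=\infty$. Since $\rho$ is continuous, lies in $L^2(\mathbb R,e^{-x^2/2}\,dx)$ and is not a polynomial, Lemma~\ref{lemma:infinite lower dimension} applied at dimension level $n-1$ gives $\dim_*(\rho,n-1)=\infty$ whenever $n-1>1$, i.e.\ for all $n\geq 3$; the boundary case $n=2$ requires the extra observation that $\dim_*(\rho,1)=\infty$ as well for the activations at hand (immediate for the real-analytic, non-homogeneous examples below; for positively-homogeneous $\rho$ such as the ReLU one argues by hand). In either case $\tfrac12\dim_*(\rho,n-1)=\infty\geq p$ for every integer $p\geq 1$, so the hypothesis of Theorem~\ref{theo:main_counter_ex} is satisfied for \emph{every} hidden-layer width, and the theorem yields a r.v.\ $(\bX,\bY)$ for which $L(\btheta)=\expval\norm{\bPhi(\bX;\btheta)-\bY}^2$ has arbitrarily bad spurious valleys.

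Finally, for the concrete list I would check each activation satisfies continuity, non-negativity, non-polynomiality and $\rho\in L^2(\mathbb R,e^{-x^2/2}\,dx)$: the sigmoid and erf-type functions are bounded, hence trivially square-integrable against the finite Gaussian weight; the softplus $\rho(z)=\beta^{-1}\log(1+e^{\beta z})$ satisfies $0\leq\rho(z)\leq \beta^{-1}\log 2+|z|$, controlling the Gaussian integral, and its asymmetric growth (linear at $+\infty$, decaying at $-\infty$) rules out it being a polynomial; the ReLU $z\mapsto z_+$ is visibly continuous, non-negative, non-polynomial and dominated by $|z|$. The main obstacle is not in this assembly — the genuine difficulty resides in Theorem~\ref{theo:main_counter_ex}, which we invoke — but only in the passage from ``$\rho$ non-polynomial'' to ``$\dim_*(\rho,n-1)=\infty$'', which is exactly Lemma~\ref{lemma:infinite lower dimension} for $n\geq 3$ and needs the small one-dimensional supplement at $n=2$.
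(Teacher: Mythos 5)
Your overall route is exactly the paper's: deduce the corollary from Theorem~\ref{theo:main_counter_ex} together with the fact that a continuous, non-polynomial activation in $L^2(\mathbb{R},e^{-x^2/2}\,dx)$ has infinite lower intrinsic dimension (Lemma~\ref{lemma:infinite lower dimension}). Your hypothesis checks (the inclusion $L^2(\mathbb{R},e^{-x^2/2}\,dx)\subseteq L^2(\mathbb{R},e^{-x^2}\,dx)$, and continuity, non-negativity, non-polynomiality and Gaussian square-integrability of the listed activations) are correct and indeed more explicit than the paper's one-line justification. You are also right, and more careful than the paper, to note that Theorem~\ref{theo:main_counter_ex} requires $p\leq\tfrac12\dim_*(\rho,n-1)$, so that Lemma~\ref{lemma:infinite lower dimension} (whose dimension parameter must exceed $1$) only covers $n\geq 3$ directly.

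The gap is in your patch for the boundary case $n=2$. You assert that $\dim_*(\rho,1)=\infty$ holds for all the activations at hand, with the positively homogeneous ones (ReLU) handled ``by hand''. That assertion is false for the ReLU and for any positively homogeneous $\rho$: on one-dimensional inputs, $\rho(wx)=w^k\rho(x)$ for $w>0$ and $\rho(wx)=|w|^k\rho(-x)$ for $w<0$, so $V_\rho\subseteq\mathrm{span}\{\rho(x),\rho(-x)\}$ and hence $\dim_*(\rho,1)\leq 2$ (for the ReLU it equals $2$). Consequently Theorem~\ref{theo:main_counter_ex} applied with $n=2$ only reaches widths $p\leq 1$ for the ReLU, and your argument does not establish the corollary for $n=2$, ReLU, $p\geq 2$. (For the bounded analytic examples such as the sigmoid, $\dim_*(\rho,1)=\infty$ is true but not ``immediate'': the Hermite/symmetric-tensor-rank proof of Lemma~\ref{lemma:infinite lower dimension} degenerates in dimension one, so a separate linear-independence argument is needed.) To be fair, the paper's own justification has the same blind spot — it cites $\dim_*(\rho,n)=\infty$ where the theorem needs $\dim_*(\rho,n-1)$ and is silent about $n=2$ — so for $n\geq 3$, and for $n=2$ with the non-homogeneous analytic activations, your proposal matches the paper and is sound; the $n=2$ homogeneous case is genuinely not closed by the argument as you wrote it.
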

This follows by Theorem \ref{theo:main_counter_ex} by observing that $\mathrm{dim}_*(\sigma,n) = \infty$ if $\sigma$ is one of the above activation functions. 

\paragraph{Discussion and comparison with previous works}

Several works showed existence of spurious minima: \citep{safran2017spurious} showed counterexamples under Gaussian input distributions, for $p = n-1 \in \bra{8,\dots,19}$, using a computer-assisted proof; \citep{swirszcz2016local} and \citep{zhou2017critical} provided a few numerical examples; \citep{yun2018small} showed existence of spurious minima for ReLU-like activations under non-realizability, and provided counterexamples for smooth activations. For any number of hidden neurons $p$, we give a (constructive) proof of existence of a data distribution which creates spurious valleys, under the only assumption of non-negative continuous activation function. We also remark that while in the above works the authors proved existence of spurious local minima, we prove that, in fact, arbitrarily bad spurious valleys can exist, which is a stronger negative characterization.


The results of this section can be interpreted as worst-case scenarios for the problem of optimizing \eqref{eq:lossFun}. We showed that, even for simple one-hidden-layer neural network architectures with non-linear activation functions used in practice (such as ReLU), global optimality results can not hold, unless we make some assumptions on the data distributions.

\section{Typical spurious valleys and low-energy barriers}\label{sec:gap}

In the previous section it was shown that whenever the
number of hidden units $p$ is below the lower intrinsic dimension, then 
one can show worst-case data distributions that yield a landscape with arbitrarily 
bad spurious valleys. A natural follow-up question is thus to consider the complexity of the energy landscape in a \emph{typical} scenario, defined in terms of both parameter initialisation (how likely are descent algorithms to fall into a spurious valley?) and energy value (how deep are typical spurious valleys?). 

In this section, we study the energy landscape under generic data distributions in case of homogeneous activation, and
show that, although spurious valleys may appear, they tend do so below a certain energy level,
controlled by the decay of the spectral decomposition of the kernel defined by the activation function and by the amount of parametrisation $p$. 
This offers a first glimpse at the empirical success of local descent algorithms in 
conditions where $p$ is indeed below the intrinsic dimension. 

We consider 
oracle square loss functions of the form
\begin{equation}\label{eq:mse}
L(\btheta) = \E
\abs*{\Phi(\bX;\btheta) - Y }^2 
\end{equation}
for one-dimensional output one-hidden-layer NNs $\Phi(\bx;\btheta) = \bu^T \sigma(\bW \bx)$, with $\btheta = (\bu, \bW) \in \mathbb{R}^p \times \mathbb{R}^{p\times n}$, $\sigma$ a positively homogeneous function,  
and $\bX,Y$ square integrable r.v. 
Notice that we can write
\begin{equation*}
L(\btheta) = \E\abs*{\Phi(\bX;\btheta) - f^*(\bX)}^2 + \E\abs*{Y - f^*(\bX)}^2
\end{equation*}
for some measurable $f^*:\mathbb{R}^n\to\mathbb{R}$ such that $f^*(\bX) = \E\parq*{Y\,|\,\bX}$. In particular this implies that
\begin{equation*}
\min_{\btheta\in\Theta}L(\btheta) \geq \mathcal{R}\parr{\bX,Y} \doteq \E\abs*{Y - f^*(\bX)}^2 
\end{equation*}
If $f^*$ can be written as a  one-hidden-layer neural network with an arbitrary number of hidden units, that is 
$$
f^*(\bx) = \int_{\R^n} \sigma(\prodscal{\bx, \bw})\rho(\bw)\,d\mu(\bw)
$$
for some measure $\mu$ and weight function $\rho$, then a possible approach to find a proper approximation of $f^*$ is through random features sampling  \citep{rahimi2008random}. Applying some recent results \citep{bach2017equivalence} relating random features expansions with kernel quadrature rules, we show that this implies the following statement:
\emph{as the network width increases, spurious valleys tend to be confined to decreasingly low loss value}. In this regime, large loss barriers are therefore avoided with high probability over initialization of the parameters. The statement is made more rigorous in the following:



\begin{theorem}\label{theo:energy_gap}
Let $d\tau$ be the uniform distribution over the unit sphere $\mathbb{S}^n$ and consider an initial parameter 
$\tilde{\btheta} = (\tilde{\bu},\tilde{\bW})$
with 
$\tilde{\bw}_i \sim d\tau$
sampled i.i.d. Then the following hold:
\begin{enumerate}
\item There exists a path $t\in[0,1]\mapsto \btheta_t$ such that $\btheta_0 = \tilde{\btheta}$, the function $t\in[0,1]\mapsto L(\btheta_t)$ is non-increasing, and 
$$
L(\btheta_1) \leq \mathcal{R}(\bX,Y) + \lambda \quad\text{if}\quad p \geq O\parr*{-\lambda^{-1}\log(\lambda\delta)}
$$
with probability greater or equal then $1-\delta$, for every $\lambda,\delta \in (0,1)$.
\item If $f^*$ is sufficiently regular\footnote{More precisely, if the function $f^*$ can be written as 
$f^*(\bx) = \int_\mathcal{W} g^*(\bw)\psi_\bw(\bx)\,d\tau(\bw)$ for some $g^* \in L^\infty_{d\tau}$.}, there exists a path $t\in[0,1]\mapsto \btheta_t$ such that $\btheta_0 = \tilde{\btheta}$, the function $t\in[0,1]\mapsto L(\btheta_t)$ is non-increasing, and 
$$
L(\btheta_1) \leq \mathcal{R}(\bX,Y) + O(p^{-1+\delta})
$$
with probability greater or equal then $1-e^{-O(p^\delta)}$ for every $\delta\in(0,1)$.
\end{enumerate}
\end{theorem}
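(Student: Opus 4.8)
The plan is to exhibit, for both parts at once, a family of descent paths that freezes the randomly drawn first layer on the sphere and then optimises only the linear output weights, and to control the endpoint value by a random kernel-quadrature estimate taken from Proposition~1 of \cite{bach2017equivalence}. Since the initial first-layer nodes satisfy $(\tilde\bw_i,\tilde b_i)\sim d\tau$, supported on $\mathbb{S}^{n}$, the path will keep them fixed there; writing $\omega_i:=(\tilde\bw_i,\tilde b_i)$ and $\psi_{(\bw,b)}(\bx):=\rho(\langle\bw,\bx\rangle+b)$, for any $\bu\in\R^{p}$ we have $\Phi(\bx;(\bu,\tilde\bb,\tilde\bW))=\sum_{i=1}^{p}u_i\,\psi_{\omega_i}(\bx)$, a $p$-node quadrature with i.i.d.\ spherical nodes and free coefficients. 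Positive homogeneity of $\rho$ enters twice: since $\mathbb{S}^{n}$ is bounded and $\rho$ has at most linear growth, $\E\|\bX\|^{2}<\infty$ gives $\sup_{\omega\in\mathbb{S}^{n}}\|\psi_\omega\|_{L^2_\bX}<\infty$, so $\bX\in\mathcal R_2(\rho,n)$ and the kernel $k(\bx,\bx')=\int_{\mathbb{S}^{n}}\psi_\omega(\bx)\psi_\omega(\bx')\,d\tau(\omega)$ is a trace-class operator on $L^2_\bX$; and every filter $\psi_{(\bw,b)}$ is a scalar multiple of a spherical one, so $\mathrm{span}\{\psi_\omega:\omega\in\mathbb{S}^{n}\}=V_\rho$, which for non-polynomial $\rho$ is dense in $L^2_\bX$.

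\emph{The path.} With $\bz=\rho(\tilde\bW\bX+\tilde\bb)=(\psi_{\omega_1}(\bX),\dots,\psi_{\omega_p}(\bX))^{\top}$, the map $\bu\mapsto L((\bu,\tilde\bb,\tilde\bW))=\E|\bu^{\top}\bz-Y|^{2}$ is a convex quadratic, and its minimum is attained (one checks $\E[Y\bz]\perp\ker\E[\bz\bz^{\top}]$), say at $\bu^{\star}$. Take $t\mapsto\btheta_t=\big((1-t)\tilde\bu+t\bu^{\star},\,\tilde\bb,\,\tilde\bW\big)$: it starts at $\tilde\btheta$, and $t\mapsto L(\btheta_t)$ is the restriction of a convex function to a segment ending at a global minimiser of that function, hence non-increasing on $[0,1]$. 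Using $\E|g(\bX)-Y|^{2}=\E|g(\bX)-f^{*}(\bX)|^{2}+\mathcal{R}(\bX,Y)$,
\[
L(\btheta_1)=\mathcal{R}(\bX,Y)+E_p,\qquad E_p:=\min_{\bu\in\R^{p}}\Big\|\textstyle\sum_{i=1}^{p}u_i\,\psi_{\omega_i}-f^{*}\Big\|_{L^2_\bX}^{2},
\]
so the task reduces to bounding the random-quadrature error $E_p$ for $\omega_i$ i.i.d.\ $\sim d\tau$.

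\emph{Quadrature bounds.} For part~1, invoke Proposition~1 of \cite{bach2017equivalence}: with $d(\lambda)=\sum_j\mu_j/(\mu_j+\lambda)$ the degrees of freedom of $k$ (eigenvalues $(\mu_j)$), drawing $p\gtrsim d(\lambda)\log(d(\lambda)/\delta)$ nodes from $d\tau$ gives, with probability $\ge1-\delta$, $E_p\lesssim\lambda\,\|f^{*}\|_{L^2_\bX}^{2}$ for any $f^{*}$ in the $L^2_\bX$-closure of the RKHS of $k$; by the set-up this closure is $L^2_\bX$, and $\|f^{*}\|_{L^2_\bX}^{2}\le\E Y^{2}$ is a fixed constant. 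Since $d(\lambda)\le\lambda^{-1}\E[k(\bX,\bX)]=O(\lambda^{-1})$, rescaling $\lambda$ by that constant yields $L(\btheta_1)\le\mathcal{R}(\bX,Y)+\lambda$ as soon as $p\ge O(-\lambda^{-1}\log(\lambda\delta))$. For part~2, under the hypothesis $f^{*}=\int g^{*}(\omega)\psi_\omega\,d\tau(\omega)$ with $g^{*}\in L^{\infty}_{d\tau}$, the naive estimator $\hat f_p=\tfrac1p\sum_{i}g^{*}(\omega_i)\psi_{\omega_i}$ is a feasible choice of output weights, so $E_p\le\|\hat f_p-f^{*}\|_{L^2_\bX}^{2}$, and $\hat f_p$ is an average of i.i.d.\ $L^2_\bX$-valued terms of norm at most $\|g^{*}\|_{\infty}\sup_\omega\|\psi_\omega\|_{L^2_\bX}=O(1)$; a Hilbert-space Bernstein/Hoeffding inequality gives $\|\hat f_p-f^{*}\|_{L^2_\bX}^{2}=O(\log(1/\delta')/p)$ with probability $\ge1-\delta'$, and taking $\delta'=e^{-O(p^{\delta})}$ produces $E_p=O(p^{-1+\delta})$ with probability $\ge1-e^{-O(p^{\delta})}$. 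Combining with the previous step gives both statements.

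\emph{Main obstacle.} The first two steps are routine (the convexity-on-a-segment argument is the usual device for freezing one layer). The work sits in the last step: faithfully transcribing Proposition~1 of \cite{bach2017equivalence} into this setting, checking its hypotheses (kernel integrability, the uniform $L^2_\bX$-bound on $\psi_\omega$, and density of the RKHS in $L^2_\bX$, which is what makes the approximation bias vanish for an arbitrary $f^{*}$ in part~1), and tracking $d(\lambda)=O(\lambda^{-1})$ so that the overparametrisation requirement comes out precisely as $p\ge O(-\lambda^{-1}\log(\lambda\delta))$. In part~2 the only delicate point is trading the exponentially small failure probability $e^{-O(p^{\delta})}$ against the $O(p^{-1+\delta})$ rate in the concentration bound.
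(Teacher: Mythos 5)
Your proposal follows essentially the same route as the paper's proof: freeze the randomly sampled first layer, move the output weights linearly to the convex minimiser (non-increasing by convexity), reduce $L(\btheta_1)-\mathcal{R}(\bX,Y)$ to the random-feature quadrature error, then apply Proposition~1 of Bach for part~1 and the plug-in choice $u_i=g^*(\omega_i)/p$ with a Hilbert-space Hoeffding bound for part~2, exactly as in the paper. The only minor slips are that Bach's Proposition~1 is stated with the \emph{maximal} degrees of freedom $\sup_{\bw}\prodscal*{\psi_\bw,(\Sigma+\lambda I)^{-1}\psi_\bw}$ rather than the trace version you quote (both are $O(\lambda^{-1})$, so the conclusion is unaffected), and in part~1 the error scales with the norm of $g^*$ on the feature side rather than with $\norm{f^*}_{L^2_\bX}$ — a point the paper itself treats with the same "density, w.l.o.g." looseness.
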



\paragraph{Sketch of the proof}

Assume that $f^*$ admits the representation
$$
f^*(\bx) = \int_\Theta \rho(\bw)\sigma(\prodscal{\bx,\bw})\,d\tau(\bw)
$$
for some density $\rho$. If $\bw_i \sim d\tau$, $i\in[p]$, are drawn i.i.d., we have
$$
\expval\parr*{\frac{1}{p}\sum_{i=1}^p\rho(\bw_i)\sigma(\prodscal{\bw_i,\bx}) - f^*(\bx)}^2 = O\parr*{\frac{1}{p}}
$$
Notice that by only moving the second layer, we can construct a (linear) descent path from $(\tilde{\bu},\tilde{\bW})$ to $(\bu,\tilde{\bW})$, where $u_i = \rho(\bw_i)$. The proof is then concluded by applying an Hoeffding's-type inequality  to get property 2. if it holds $\rho \in L^\infty(\mathbb{S}^n\,d\tau)$ or by applying Proposition 1 in \citep{bach2017equivalence} to obtain property 1.

\paragraph{Related works}

Many recent works leveraged arguments based
on random features to explain the empirical success of local descent algorithms to train neural networks (see e.g. \citep{jacot2018neural, allen2018convergence, oymak2019towards, yehudai2019power, ma2019comparative,du2018gradient}). In Theorem \ref{theo:energy_gap}, we used this type of technique to show properties of the optimization landscape. The main limitation shared by our and the cited results is the gap between the regimes in which the apply (high over-parametrized NNs) and the regimes attained in practice. A current important direction is to understand the dynamics of neural networks training over kernel approximation and to extend such results to \emph{moderatly} over-parametrized architectures.

\begin{remark}
Notice that in the previous description of the problem, we dropped bias terms from the neural network architectures for sake of simplicity, as we can immediately generalize to the biases case by stacking the bias in the weights and input random variables.
As a bias term is needed in order to use universal approximation results, with abuse of notation, in the above theorem we wrote $\prodscal{\bw,\bx} \doteq \prodscal{\bw^{(n)},\bx} + w_{n+1}$ for $\bw\in\mathbb{S}^n, \bx\in\mathbb{R}^n$, where $\bw^{(n)} = (w_1,\dots,w_n)$ represents a neuron weight and $w_{n+1}$ a bias term; again, note that this can be done by simply considering the r.v. $\tilde{\bX} \doteq (\bX,1)$ in place of $\bX$.
\end{remark}





\section{Future directions}\label{section:discussion}

We considered the problem of characterizing the loss surface of neural networks from the perspective of optimization, with the goal of deriving weak certificates that enable - or prevent - the existence of descent paths towards global minima. 


The topological properties studied in this paper, however, do not yet capture fundamental aspects that are necessary to explain the empirical success of deep learning methods. We identify a number of different directions that deserve further attention.



The positive results presented above rely on being able to reduce the network to the case when (convex) optimization over the output layer is sufficient to reach optimal weight values. A better understanding of first layer dynamics needs to be carried out.
Moreover, in such positive results we only proved non-existence of (high) energy barriers. While this is an interesting property from the optimization point of view, it is also not sufficient to guarantee convergence of local descent algorithms. Another informative property of the loss function that should be addressed in future works is the existence of local descents in non optimal points: for every $\btheta_0\in\Theta$ non optimal and any neighborhood $\mathcal{U}\subseteq \Theta$ of $\btheta_0$, there exists $\btheta\in\mathcal{U}$ such that $L(\btheta)<L(\btheta_0)$. More generally, our present work is not informative on the performance of gradient descent in the regimes with no spurious valley. 


The other very important point to be addressed in future is how to extend the above results to architectures of more practical interest. Depth 
and the specific linear structure of Convolutional Neural Networks, critical to explain the excellent empirical performance of deep learning in computer vision, text or speech, need to be exploited, as well as specific design choices such as Residual connections  and several normalization strategies -- as done recently in \citep{shamir2018resnets} and \citep{santurkar2018does} respectively. This also requires making specific assumptions on the data distribution, and is left for future work. 

\paragraph{Acknowledgements} We would like to thank G\'erard Ben Arous and L\'eon Bottou
for fruitful discussions, and Jean Ponce for valuable comments and corrections of the original version of this manuscript. LV would also like to thank Jumageldi Charyyev for fruitful discussions on the proofs of several propositions and Andrea Ottolini for valuable comments on a previous version of this manuscript. LV was partially supported by NSF
grant DMS-1719545. ASB was partially supported by NSF
grants DMS-1712730 and DMS-1719545, and by a grant from the Sloan
Foundation. JB acknowledges the partial support by the Alfred P. Sloan Foundation, NSF RI-1816753, NSF CAREER CIF 1845360, and Samsung Electronics.

\bibliography{references}

\appendix

\section{Proofs of Section \ref{sec:no spurious valleys}}
\label{sec:no spurious valleys 
proofs}

\paragraph{Notations} 

For any r.v.'s $\bX$ and $\bY$ with values in $\mathbb{R}^n$ and $\mathbb{R}^m$ respectively, we denote $\bSigma_\bX  = \expval\parq*{\bX\bX^T}$ and $\bSigma_{\bX\bY}  = \expval\parq*{\bX\bY^T}$. For every integer $n \geq 1$, we denote by $GL(n)$, $O(n)$ and $SO(n)$, respectively, the general linear group, the orthogonal group and the special orthogonal group of real $n\times n$ matrices. $\bI$ denotes the identity matrix and $\be_1,\dots,\be_n$ the standard basis in $\mathbb{R}^n$.

\subsection{Proof of Theorem \ref{master_theorem}}

We note that, under the assumptions of Theorem \ref{master_theorem}, the same optimal NN functions $\Phi_i(\cdot;\btheta)$ could also be obtained using a generalized linear model, where the representation function has the linear form $\Phi_i(\bx;\btheta) = \prodscal{\btheta_i,\bvarphi(\bx)}$, for some parameter independent function $\bvarphi:\mathbb{R}^n \to \mathbb{R}^{\mathrm{dim}^*(\sigma,\bX)}$. The main difference between the two models is that the former requires the choice of a non-linear activation function $\sigma$, while the latter implies the choice of a kernel functions. This is the content of the following lemma.

\begin{lemma}\label{lemma:intrinsic_subspace}
Let $\sigma:
\mathbb{R}\to\mathbb{R}$ be a continuous function and $\bX \in \mathcal{R}_2(\sigma,n)$ a r.v. Assume that the linear space
\begin{equation*}
V_{\sigma,\bX} \doteq \mathrm{span}\parr*{\bra{f \st f \in V_{\sigma,1}}} \subseteq L^2_\bX
\end{equation*}
is finite dimensional. Then there exists a scalar product $\prodscal{\cdot,\cdot}$ on $V_{\sigma,\bX}$ and a map $\bx\in\mathbb{R}^n\mapsto\bvarphi(\bx)\in V_{\sigma,\bX}$ such that
\begin{equation}\label{scalar_product_rkhs_lemma}
    \prodscal{\bpsi_{\sigma,\bw},\bvarphi(\bx)} = \psi_{\sigma,\bw}(\bx) = \sigma(\prodscal{\bw,\bx})
\end{equation}
for all $\bw\in\mathbb{R}^n$. Moreover, the function $\bw\in\mathbb{R}^n\mapsto \bpsi_{\sigma,\bw}\in V_{\sigma,\bX}$ is continuous.
\end{lemma}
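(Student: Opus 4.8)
The plan is to realize $\parr*{V_{\rho,\bX},\prodscal{\cdot,\cdot}}$ as a finite-dimensional reproducing kernel Hilbert space of genuine continuous functions on the support $S$ of the law of $\bX$, and to take $\bvarphi(\bx)$ to be the Riesz representer of ``evaluation at $\bx$''. \emph{Step 1 (continuous representatives).} Since $V_{\rho,\bX}=\mathrm{span}\parr*{\bra*{\bpsi_{\rho,\bw}\st\bw\in\mathbb{R}^n}}$ is finite dimensional, say of dimension $N$, I would first extract from this spanning family a basis $\bpsi_{\rho,\bw_1},\dots,\bpsi_{\rho,\bw_N}$ consisting of filters. Then every $f\in V_{\rho,\bX}$ carries the continuous representative $\tilde f=\sum_{j=1}^N c_j\,\psi_{\rho,\bw_j}$, the coefficients $c_j$ being uniquely determined by $f$; and any two continuous representatives of the same class coincide on $S$ (they agree on a set of full measure, which is dense in $S$, and continuous functions agreeing on a dense set agree on its closure). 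Hence for $\bx\in S$ the map $\mathrm{ev}_\bx\colon f\mapsto\tilde f(\bx)$ is a well-defined linear functional on $V_{\rho,\bX}$, and $\mathrm{ev}_\bx(\bpsi_{\rho,\bw})=\rho(\prodscal{\bw,\bx})$ because $\bx\mapsto\rho(\prodscal{\bw,\bx})$ is one continuous representative of $\bpsi_{\rho,\bw}$.

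\emph{Step 2 (scalar product and $\bvarphi$).} I would take $\prodscal{\cdot,\cdot}$ to be the scalar product inherited from $L^2_\bX$, making $V_{\rho,\bX}$ an $N$-dimensional Hilbert space; on such a space every linear functional is automatically continuous, so Riesz representation yields, for each $\bx\in S$, a unique $\bvarphi(\bx)\in V_{\rho,\bX}$ with $\prodscal{f,\bvarphi(\bx)}=\mathrm{ev}_\bx(f)$ for all $f\in V_{\rho,\bX}$ (set $\bvarphi(\bx)=\bzero$ for $\bx\notin S$). Taking $f=\bpsi_{\rho,\bw}$ and invoking the last identity of Step~1 gives $\prodscal{\bpsi_{\rho,\bw},\bvarphi(\bx)}=\rho(\prodscal{\bw,\bx})$, i.e.\ \eqref{scalar_product_rkhs_lemma}, for every $\bw\in\mathbb{R}^n$ and every $\bx\in S$.

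\emph{Step 3 (continuity of $\bw\mapsto\bpsi_{\rho,\bw}$).} Rather than estimating $\norm*{\bpsi_{\rho,\bw}-\bpsi_{\rho,\bw'}}_{L^2_\bX}$ directly (which would need some uniform integrability of the filters), I would argue by duality inside the finite-dimensional space $V_{\rho,\bX}$. The functionals $\bra*{\mathrm{ev}_\bx\st\bx\in S}$ separate points of $V_{\rho,\bX}$ — if $\tilde f\equiv 0$ on $S$ then $f=0$ in $L^2_\bX$ — hence they span the $N$-dimensional dual, so one can pick $\bx_1,\dots,\bx_N\in S$ for which $T\colon f\mapsto\parr*{\tilde f(\bx_1),\dots,\tilde f(\bx_N)}$ is a linear isomorphism $V_{\rho,\bX}\to\mathbb{R}^N$, hence a homeomorphism. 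Since $\bw\mapsto T(\bpsi_{\rho,\bw})=\parr*{\rho(\prodscal{\bw,\bx_1}),\dots,\rho(\prodscal{\bw,\bx_N})}$ is continuous (continuity of $\rho$), composing with $T^{-1}$ shows that $\bw\mapsto\bpsi_{\rho,\bw}\in V_{\rho,\bX}$ is continuous.

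\emph{Main difficulty.} The construction is otherwise routine; the one genuinely delicate point is the passage from $L^2_\bX$-equivalence classes to pointwise-defined functions. A class pins down its values only on $S$, so $\mathrm{ev}_\bx$, the reproducing identity, and the duality argument of Step~3 must all be read over $S$ — and the identity really can fail off $S$ (e.g.\ if $\bX$ is a.s.\ constant, $V_{\rho,\bX}$ collapses to a line while $\bx\mapsto\rho(\prodscal{\bw,\bx})$ does not). This is harmless for the proof of Theorem~\ref{master_theorem}, which only ever evaluates $\bvarphi$ at the input $\bX$, which lies in $S$ almost surely; I would therefore state the lemma with $\bx$ ranging over $S$.
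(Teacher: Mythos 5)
Your proof is correct, and its core mechanism is the same as the paper's: exploit finite dimensionality to build a reproducing-kernel-style map $\bvarphi$ and obtain continuity of $\bw\mapsto\bpsi_{\rho,\bw}$ by evaluating at finitely many well-chosen points so that coordinates depend continuously on $\bw$ through $\rho(\prodscal{\bw,\bx_i})$. The differences are in the bookkeeping: the paper does not use the $L^2_\bX$ inner product and Riesz representation, but instead declares a filter basis $\bpsi_{\rho,\bw_1},\dots,\bpsi_{\rho,\bw_q}$ orthonormal (a coefficient inner product) and writes $\bvarphi(\bx)=\sum_i\psi_{\rho,\bw_i}(\bx)\,\bpsi_{\rho,\bw_i}$ explicitly, then gets continuity from $\balpha(\bw)=\bM^{-1}\bz(\bw)$ with $\bM=(\psi_{\rho,\bw_j}(\bx_i))_{ij}$; your duality/separation argument supplies the justification, which the paper only asserts, that suitable evaluation points $\bx_1,\dots,\bx_q$ exist. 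Your restriction of the identity \eqref{scalar_product_rkhs_lemma} to the support of $\bX$ is a legitimate sharpening rather than a deviation: with either construction the reproducing identity is only guaranteed for $\bx$ in the support (your a.s.-constant example shows it can genuinely fail elsewhere), and, as you note, this is all that the proof of Theorem \ref{master_theorem} uses, since $\bvarphi$ is only ever evaluated at $\bX$. Either inner product works because all that is needed downstream is bilinearity of $(\bw,\bx)\mapsto\prodscal{\bpsi_{\rho,\bw},\bvarphi(\bx)}$ in the two vectors $\bpsi_{\rho,\bw},\bvarphi(\bx)\in V_{\rho,\bX}$ together with continuity of both maps; your Riesz-representer version buys a canonical, basis-free construction, while the paper's explicit formula avoids invoking any Hilbert-space structure on $L^2_\bX$.
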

\begin{proof}
For sake of simplicity, in the following we write $\psi_\bw$ for $\psi_{\sigma,\bw}$ and $V$ for $V_{\sigma,\bX}$. Let $\bpsi_{\bw_1},\dots,\bpsi_{\bw_q}$ be a basis of $V$. If $\bpsi_\bw = \sum_{i=1}^q\alpha_i\bpsi_{\bw_i}$ and $\psi_\bv = \sum_{j=1}^q\beta_j\bpsi_{\bw_j}$, then we can define a scalar product on $V$ as
\begin{equation*}
\prodscal{\bpsi_\bw,\bpsi_\bv} \doteq \sum_{i=1}^q\alpha_i\beta_i
\end{equation*}
If we define the map $\bx\in\mathbb{R}^n\mapsto\bvarphi(\bx)\in V$ as
\begin{equation*}
    \bvarphi(\bx) = \sum_{i=1}^q\psi_{\bw_i}(\bx) \bpsi_{\bw_i}
\end{equation*}
then property \eqref{scalar_product_rkhs_lemma} follows directly by the definition of the function $\psi_\bw$. Moreover, we can choose $\bx_1,\dots,\bx_q$ such that $\bvarphi(\bx_1),\dots,\bvarphi(\bx_q)$ is a basis of $V$. Now we need to show that, for $i\in[q]$, the map $\bw\mapsto \prodscal{\bpsi_\bw,\bpsi_{\bw_i}}$ is continuous. Let $\bM$ be the matrix $\bM \doteq (\psi_{\bw_j}(\bx_i))_{i,j} \in \mathbb{R}^{q\times q}$ and $\bz(\bw)$ be the vector $\bz(\bw)\doteq (\psi_\bw(\bx_i))_i \in\mathbb{R}^q$. Then $\prodscal{\bpsi_\bw,\bpsi_{\bw_i}} = (\bM^{-1}\bz(\bw))_i$, which is continuous in $\bw$. This shows that the map $\bw\in\mathbb{R}^n\mapsto \bpsi_\bw\in V$ is continuous.
\end{proof}

The non-trivial fact captured by Theorem \ref{master_theorem} is the following: when the capacity of network is large enough to match a generalized linear model, but still finite, then the problem of optimizing the loss function (\ref{eq:lossFun}), which is in general a highly non-convex object, satisfies an interesting optimization property in view of the local descent algorithms which are used in practice to solve it.

\begin{proof}[Proof of Theorem \ref{master_theorem}]
Thanks to Lemma \ref{lemma:intrinsic_subspace}, there exist two continuous maps $\bvarphi,\bpsi : \mathbb{R}^n \to \mathbb{R}^q \simeq V_{\sigma,\bX}$, with $q = \mathrm{dim}^*(\sigma,\bX)$, such that $\sigma(\prodscal{\bw,\bx}) = \prodscal{\bpsi(\bw),\bvarphi(\bx)}$ for every $\bw,\bx \in \mathbb{R}^n$. Therefore, every one-hidden-layer NN
$\bPhi(\bx;\btheta) = \bU\sigma(\bW\bx)$ can be written as $\bPhi(\bx;\btheta) = \bU\bpsi(\bW)\bvarphi(\bx)$, where, if $\bW \in \mathbb{R}^{p \times n}$, then $\bpsi(\bW)  \in \mathbb{R}^{p \times q}$ (that is $\bpsi$ is applied row-wise). 

The proof of the Theorem consists in exploiting the above \emph{linearized} representation of $\bPhi$ to show that property \ref{p:conn} holds (remind that this is equivalent to saying that the loss function has no spurious valleys).
Given an initial parameter $\tilde{\btheta} = (\tilde{\bU},\tilde{\bW})$, we want to construct a continuous path $t\in[0,1]\mapsto \btheta_t = (\bU_t,\bW_t)$, such that the function $t \in [0,1] \mapsto L(\btheta_t)$ is non-increasing and such that $\btheta_0 = \tilde{\btheta}$, $\btheta_1 \in \argmin_{\btheta} L(\btheta)$, where $L(\btheta) = \expval\parq*{ \ell(\bPhi(\bX;\btheta),\bY) }$. The construction of such a path can be articulated in two main steps:

\paragraph{Step 1.}
The first part of the path consist showing that we can assume that $\rk(\bpsi(\tilde{\bW})) = q$ w.l.o.g. Let  $\bw_1^T,\dots,\bw_p^T\in\mathbb{R}^n$ be the rows of $\tilde{\bW}$; suppose that $\rk(\bpsi(\tilde{\bW})) = r < q$ (otherwise there is nothing to show) and that $\bpsi(\bw_{i_1}),\dots,\bpsi(\bw_{i_r})$ are linearly independent. Denote $I =\bra{i_1,\dots,i_r}$, $J = [1,p]\setminus I = \bra{j_1,\dots,j_{p-r}}$ and $\bu_1,\dots,\bu_p$ the columns of $\tilde{\bU}$. For $j\in J$, we can write
\begin{equation}\label{eq:first_layer_lin_comb_master}
\bpsi(\bw_j) = \sum_{k=1}^r a_j^k\, \bpsi(\bw_{i_k}) \quad\text{for some } a_j^k\in\mathbb{R}
\end{equation}
If we define $\bU_1$ such that (denoting $\bu_{1,i}$ the $i$-th row of $\bU_1$)
\begin{equation*}
\bu_{1,i} = \bu_i + \sum_{k=1}^{n-r} a_k^i\,\bu_{j_k}   \quad\text{for }i \in I,\quad \bu_{1,j} = 0 \quad\text{for }j\in J
\end{equation*}
then $\bU_1\tilde{\bW} = \tilde{\bU}\tilde{\bW}$.
The path $t\in[0,1/2]\mapsto\btheta_t=(2t\,\bU_1+(1-2t)\tilde{\bU},\tilde{\bW})$ leaves the network unchanged, i.e. $\bPhi(\cdot;\tilde{\btheta}) = \bPhi(\cdot;\btheta_t)$ for $t\in[0,1/2]$. At this point, we can select $\bw_{1,j_1},\dots,\bw_{1,j_{p-r}}\in\mathbb{R}^n$ such that the matrix $\bW_1$ with rows $\bw_{1,i}=\bw_i$ for $i\in I$ and $\bw_{1,j}$ for $j\in J$, verifies $\rk(\bpsi(\bW_1)) = q$. Notice that the existence of such vectors $\bw_{1,j_k}$, $k\in [p-r]$, is guaranteed by the definition of $q = \mathrm{dim}^*(\sigma,\bX)$. The path $t\in[1/2,1]\mapsto\btheta_t=(\bU_1,(2t-1)\bW_1 + (2 -2t)\tilde{\bW})$ leaves the network unchanged, i.e. $\bPhi(\cdot;\btheta_0) = \bPhi(\cdot;\btheta_t)$ for $t\in[0,1]$. The new parameter value $\btheta_1 = (\bU_1,\bW_1)$ satisfies $\rk(\bpsi(\bW_1)) = q$.

\paragraph{Step 2.} By step 1, we can assume that $\rk(\tilde{\bW}) = q$. Since the network has the form $\bPhi(\bx;\btheta) = \bU\bpsi(\bW)\bvarphi(\bx)$ and since the function $\ell$ is convex, there exists $\bU^* \in \mathbb{R}^{m \times p}$ such that $\btheta = (\bU^*,\tilde{\bW}) \in \argmin_{\btheta} L(\btheta)$. The proof is therefore concluded by selecting the path $t \in [0,1] \mapsto \btheta_t = (t\bU^* + (1-t)\tilde{\bU},\tilde{\bW})$.

This shows that property \ref{p:conn} holds and therefore it proves the theorem.
\end{proof}

\subsection{Proof of Theorem \ref{theo:linear}}

The first step for proving Theorem \ref{theo:linear} consists in extending the result of Theorem \ref{master_theorem} to the case of one-hidden-layer linear NNs
$\bPhi(\bx;\btheta) = \bU\bW\bx$
with $\bU\in\mathbb{R}^{m\times p}$, $\bW\in\mathbb{R}^{p\times n}$ with $p<n$ and square loss functions $L(\btheta) = \expval\norm*{\bPhi(\bX;\btheta) -\bY}^2$. We start by pointing out a symmetry property of this type of networks:
for every $\bG \in GL(p)$ it holds that
\begin{equation}
\bPhi(\bx;(\bU,\bW)) = \bU\bW\bx = (\bU\bG^{-1})(\bG\bW)\bx = \bPhi(\bx;(\bU\bG^{-1},\bG\bW))
\end{equation}
This means that the map $\btheta\mapsto \bPhi(\cdot;\btheta)$ is defined up to an action of the group $GL(p)$ over the parameter space $\Theta = \mathbb{R}^{m\times p}\times \mathbb{R}^{p\times n}$; the same remark holds for the loss function $L(\btheta)$. We can therefore think about the loss function as defined over the topological quotient $\Theta / GL(p)$. We denote the orbit of an element $\btheta=(\bU,\bW)\in\Theta$ as
\begin{equation*}
[\btheta] = [\bU,\bW] = \bra{\bG\cdot\btheta = (\bU\bG^{-1},\bG\bW) \st \bG \in GL(p)}
\end{equation*}
If $g$ is a real-valued function defined on $\Theta$ such that $g(\bG\cdot \btheta) = g(\btheta)$ for all $\bG\in GL(p)$ and $\btheta\in\Theta$, then one can equivalently consider $g$ as defined on $\Theta/GL(p)$ as $g([\btheta]) = g(\btheta)$; for simplicity we denote $g[\btheta] = g([\btheta])$. This is exactly the case for the loss function $L(\btheta)$. 
In the proof of Theorem \ref{master_theorem}, we describe how to construct a path from an initial parameter value $\tilde{\btheta} = (\tilde{\bU},\tilde{\bW})$ to a parameter value $\btheta_1 = (\bq(\bW_1),\bW_1)$, with $\rk(\bW_1) = p$ and $\bq:\mathbb{R}^{p\times n} \to \mathbb{R}^{m\times p}$ the function defined by
$$
\bq(\bW) = \bSigma_{\bY\bX}\bW^T(\bW \bSigma_\bX \bW^T)^\dagger \in \argmin_\bU L(\btheta)|_{\btheta = (\bU,\bW)}
$$
(see Lemma \ref{lemma:best_solution_regression}). Therefore, let $\tilde{\btheta} = (\bq(\tilde{\bW}),\tilde{\bW})$ with $\rk(\tilde{\bW}) = p$, be an initial parameter. 
Since an optimal parameter is given by $\btheta = (\bq(\bW),\bW)$ for some $\bW$, we seek for a path in the form $\btheta_t = (\bq(\bW_t),\bW_t)$ with $\rk(\bW_t) = p$ for all $t\in[0,1]$. This path must be such that $t\mapsto L(\btheta_t)$ is non-increasing. If we assume that $\bSigma_\bX = \bI$, it holds
\begin{equation*}
L(\btheta_t) = \tr(\bSigma_\bY) - \tr(\bM\bP_{\bW_t})   
\end{equation*}
where $\bM$ is a PSD matrix and, for every matrix $\bW$, $\bP_{\bW}$ denotes the orthogonal projection on the rows of $\bW$, that is $\bP_{\bW} = \bW^\dagger\bW$  (see Lemma \ref{lemma:best_solution_regression}). Therefore it is equivalent for the path $\btheta_t =(\bq(\bW_t),\bW_t)$ to be such that the function
\begin{equation*}
t\in[0,1] \mapsto f(\bW_t) \doteq \tr(\bM\bP_{\bW_t})    
\end{equation*}
is non-decreasing. In particular, the function $f$ is defined up to the action of the group $GL(p)$ on $\Theta$. Since we look for $\bW_t$ of rank $p$, we can consider $f$ as defined on $G(p,n)$, the Grassmanian of $p$ dimensional linear subspaces of $\mathbb{R}^n$. The proof below for the linear one-hidden-layer case is articulated as follows. We first construct a path $[\bW_t]\in G(p,n)$ such that $[\bW_0] = [\tilde{\bW}]$, $[\bW_1]$ maximizes $f$ and such that the function $t\in[0,1]\mapsto f[\bW_t]$ is non-decreasing (Lemma \ref{lemma:grass}). We then show that such a path can be \emph{lifted} to a corresponding path $\bW_t\in \mathbb{R}^{p\times n}$ (Lemma \ref{lemma:lifting_the_path}). Finally, we show that we can drop the assumption $\bSigma_\bX = \bI$ and the result still holds (Lemma \ref{lemma:sigmaX_ass}).
\begin{lemma}\label{lemma:grass}
Let $[\tilde{\bW}]\in G(p,n)$ and assume $\bSigma_\bX = \bI$. Then there exists a continuous path $t\in[0,1]\mapsto[\bW_t]\in G(p,n)$ such that $[\bW_0] = [\tilde{\bW}]$, $[\bW_1]$ maximizes $f$ and such that the function $t\in[0,1]\mapsto f[\bW_t]$ is non-decreasing.
\end{lemma}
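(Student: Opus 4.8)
The plan is to reduce the statement to a single explicit ``rotate one coordinate at a time'' construction on the Grassmannian. First I would fix the eigendecomposition $\bM = \sum_{i=1}^n \lambda_i\bv_i\bv_i^T$ with orthonormal $\bv_i$ and $\lambda_1\geq\cdots\geq\lambda_n\geq 0$, and record that for any $S\in G(p,n)$ one has $f(S) = \tr(\bM\bP_S) = \sum_{i=1}^n\lambda_i\norm{\bP_S\bv_i}^2$, where the coefficients $c_i\doteq\norm{\bP_S\bv_i}^2 = \bv_i^T\bP_S\bv_i$ satisfy $0\leq c_i\leq 1$ and $\sum_i c_i = \tr(\bP_S) = p$. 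An elementary rearrangement then gives $f(S)\leq\sum_{i=1}^p\lambda_i$, with equality at $S^\star\doteq\mathrm{span}(\bv_1,\dots,\bv_p)$; in particular $S^\star$ is a maximizer of $f$, so it suffices to build a continuous, $f$-non-decreasing path in $G(p,n)$ from $S_0\doteq\mathrm{row}(\tilde\bW)$ to $S^\star$.

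I would do this by induction on $p$, the case $p=0$ being a constant path. The inductive step has two parts. \emph{Step A:} construct a continuous path in $G(p,n)$ from $S_0$ to some $S_1$ with $\bv_1\in S_1$, along which $f$ does not decrease. If $\bv_1\in S_0$ take the constant path; otherwise pick orthonormal vectors $\bu\in S_0$ and $\bz\perp S_0$ with $\bv_1\in\mathrm{span}(\bu,\bz)$ (take $\bu = \bP_{S_0}\bv_1/\norm{\bP_{S_0}\bv_1}$ and $\bz$ the normalized component of $\bv_1$ orthogonal to $S_0$ when $\bP_{S_0}\bv_1\neq 0$; when $\bP_{S_0}\bv_1 = 0$ take any unit $\bu\in S_0$ and $\bz = \bv_1$), complete $\bu$ to an orthonormal basis $\bu,\bw_2,\dots,\bw_p$ of $S_0$, and rotate $\bu$ toward $\bv_1$ inside the plane $\mathrm{span}(\bu,\bz)$ while holding $\bw_2,\dots,\bw_p$ fixed: $\bw_1(t) = \cos(t\alpha)\bu + \sin(t\alpha)\bz$ with $\bv_1 = \cos\alpha\,\bu+\sin\alpha\,\bz$, $\alpha\in(0,\pi/2]$. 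Since $\bu,\bz$ are orthogonal to $\mathrm{span}(\bw_2,\dots,\bw_p)$, the vectors $\bw_1(t),\bw_2,\dots,\bw_p$ form an orthonormal frame for all $t$, so $S(t)\doteq\mathrm{span}(\bw_1(t),\bw_2,\dots,\bw_p)$ is a continuous path in $G(p,n)$ with $\bv_1\in S(1)$, and $f(S(t)) = \bw_1(t)^T\bM\bw_1(t) + \sum_{j\geq 2}\bw_j^T\bM\bw_j$, so only the first term varies.

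The technical heart is monotonicity of $g(\phi)\doteq(\cos\phi\,\bu+\sin\phi\,\bz)^T\bM(\cos\phi\,\bu+\sin\phi\,\bz)$ on $[0,\alpha]$. This is a shifted sinusoid $\frac{a+b}{2} + R\cos(2\phi-2\phi_0)$ of period $\pi$; because the plane $\mathrm{span}(\bu,\bz)$ contains the top eigenvector $\bv_1$ of $\bM$, the Rayleigh quotient restricted to that plane attains its maximum value $\lambda_1$ at $\phi=\alpha$, which (when $R>0$, where $g$ is a non-constant sinusoid attaining its maximum once per period) forces $\phi_0\equiv\alpha\pmod{\pi}$, and then $g$ is non-decreasing on $[0,\alpha]\subseteq[\alpha-\pi/2,\alpha]$ (and is constant if $R=0$). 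This yields Step A.

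\emph{Step B:} identify $\bv_1^\perp$ with $\R^{n-1}$; then $S_1 = \mathrm{span}(\bv_1)\oplus S_1'$ for a $(p-1)$-dimensional $S_1'\subseteq\bv_1^\perp$, and $f(\mathrm{span}(\bv_1)\oplus T') = \lambda_1 + \tr(\bM'\bP_{T'})$ with $\bM'\doteq\bM|_{\bv_1^\perp}$ PSD and top eigenvectors $\bv_2,\dots,\bv_n$. Apply the induction hypothesis to obtain a continuous non-decreasing path $T'(t)$ in $G(p-1,n-1)$ from $S_1'$ to $\mathrm{span}(\bv_2,\dots,\bv_p)$, and set $S(t) = \mathrm{span}(\bv_1)\oplus T'(t)$. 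Concatenating Step A with this lifted inductive path (rescaling each piece onto a subinterval of $[0,1]$) gives the required path from $[\tilde\bW]$ to $[S^\star]$. I expect Step A — in particular ruling out any dip of $f$ while moving toward $\bv_1$ — to be the main obstacle: a naive simultaneous rotation along all principal angles need not keep $f$ monotone, whereas rotating within the single plane spanned by $\bv_1$ and its projection onto $S_0$, together with the location of the maximum of the resulting sinusoid, is exactly what forces monotonicity.
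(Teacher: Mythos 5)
Your proposal is correct and takes essentially the same route as the paper: both proofs greedily align the subspace with the top eigenvectors of $\bM$ one at a time, rotating a single vector of an adapted orthonormal frame toward $\bv_{i+1}$ inside the plane spanned by that eigenvector and its (normalized) projection onto the current subspace, while keeping the remaining frame vectors fixed. The only differences are cosmetic: you organize the iteration as an induction on $p$ via restriction to $\bv_1^\perp$ and verify monotonicity through the sinusoidal form of the restricted quadratic form (with the phase pinned by the Rayleigh-quotient maximum at $\bv_1$), whereas the paper writes out the $p$ path segments explicitly and checks monotonicity by differentiating.
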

\begin{proof}
While it is geometrically intuitive that the results should hold, we derive a constructive proof. We start by noticing that if $[\bW] \in G(p,n)$ and $\bw_1,\dots,\bw_p$ is an orthonormal basis of $[\bW]$, then 
\begin{equation}\label{eq:loss_orth}
f[\bW] = \sum_{i=1}^p \bw_i^T\bM\bw_i
\end{equation}
Moreover, if $\bM = \sum_{j=1}^n \sigma_i \bv_j\bv_j^T$ is the SVD of $\bM$, where $\sigma_1\geq\dots\geq\sigma_n \geq 0$, then (\ref{eq:loss_orth}) can be written as 
\begin{equation*}
f[\bW] = \sum_{j=1}^n\sigma_j \sum_{i=1}^p \prodscal{\bv_j,\bw_i}^2
\end{equation*}
In particular the maximum of $f$ is obtained for $[\bW] = [\bV] \doteq [\bv_1,\dots,\bv_p]$ (with some abuse of notation, we identify a subspace with one of its basis). To prove the result is therefore sufficient to show a path $[\bW_t]$ from any $[\bW_0]=[\tilde{\bW}]$ to $[\bW_1]= [\bV]$, such that the function $t\in[0,1]\mapsto f[\bW_t]$ is non-decreasing. To do this we construct a finite sequence of paths
\begin{equation*}
[\bW_t^i] \quad\text{such that} \quad [\bW_0^i] = [\bW^{i-1}] \quad\text{and}\quad [\bW_1^i] = [\bW^i]   
\end{equation*}
for $i\in[p]$, with $[\bW^0] = [\tilde{\bW}]$, $[\bW^p] = [\bV]$ and
\begin{equation*}
\bW^i = [\bv_1,\dots,\bv_i,\bw_{i+1}^{i-1},\dots,\bw_p^{i-1}] \quad\text{for } i \in [p]
\end{equation*}
where $\bw_1^j =\bv_1,\dots,\bw_j^j=\bv_j,\bw_{j+1}^j,\dots,\bw_p^j$ is an orthonormal basis of $[\bW^j]$, for $j\in[0,p]$. Moreover, the paths $[\bW^i_t]$ are such that the functions $t\in[0,1]\mapsto f[\bW^i_t]$ are non-decreasing. Such paths are defined as follows. Let $i\in[0,p-1]$ and consider 
\begin{equation*}
[\bW^i]=[\bw^i_1=\bv_1,\dots,\bw^i_i=\bv_i,\bw^i_{i+1},\dots,\bw^i_p]
\end{equation*}
We define 
\begin{equation*}
\bu_{i+1}^i = \begin{cases}\frac{\bP_{\bW^i}\bv_{i+1}}{\norm{\bP_{\bW^i}\bv_{i+1}}} & \text{if } \bP_{\bW^i}\bv_{i+1} \neq \bzero \\  \bzero & \text{o.w.} \end{cases}    
\end{equation*}
Then we complete $\bv_1,\dots,\bv_i,\bu_{i+1}^i$ to an orthonormal basis of $[\bW^i]$: $$\bv_1,\dots,\bv_i,\bu_{i+1}^i,\dots,\bu_p^i$$ We call $\bw^{i+1}_j = \bu_j^i$ for $j\in[i+2,p]$ and we define
\begin{equation*}
[\bW^{i+1}] = [\bv_1,\dots,\bv_i,\bw_{i+1}^{i+1}=\bv_{i+1},\bw_{i+2}^{i+1},\dots,\bw_p^{i+1}]
\end{equation*}
The path $[\bW^i_t]$ is then obtained by moving $\bu_{i+1}^i$ to $\bv_{i+1}$ on a geodesic on the unit sphere $S^{n-1}\subset \mathbb{R}^n$, i.e.
\begin{equation*}
[\bW^{i+1}_t] = [\bv_1,\dots,\bv_i,\bu_{i+1}^i(t),\bu_{i+2}^i,\dots,\bu_p^i]
\end{equation*}
where we defined
\begin{equation*}
\bu_{i+1}^i(t) \,=\, (1 - (1 - \mu_{i+1})t)\bu_{i+1}^i \, + \, \sqrt{1-(1 - (1 - \mu_{i+1})t)^2}\cdot\frac{\bv_{i+1} - \mu_{i+1}\bu_{i+1}^i}{\sqrt{1-\mu_{i+1}^2}}
\end{equation*}
for $\mu_{i+1} = \prodscal{\bu_{i+1}^i,\bv_{i+1}}$.
The fact that the function $t\in[0,1] \mapsto f[\bW^{i+1}_t]$ is non-decreasing can be proved by noticing that
\begin{equation*}
f[\bW^{i+1}_t] - f[\bW^i] = \sum_{j=i+1}^n\sigma_j\prodscal{\bu_{i+1}^i(t),\bv_j}^2
\end{equation*}
and by showing that the derivative of the RHS is greater or equal than $0$.
This concludes the proof of the lemma.
\end{proof}
\begin{lemma}
\label{lemma:lifting_the_path}
Let $\tilde{\bW}\in \mathbb{R}^{p\times n}$ and assume $\bSigma_X = \bI$. Then there exists a continuous path $t\in[0,1]\mapsto \bW_t\in \mathbb{R}^{p\times n}$ such that $\bW_0 = \tilde{\bW}$, $\bW_1$ maximizes $f$ and such that the function $t\in[0,1]\mapsto f(\bW_t)$ is non-decreasing.
\end{lemma}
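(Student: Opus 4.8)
The plan is to take the path $t\mapsto[\bW_t]$ in $G(p,n)$ produced by Lemma~\ref{lemma:grass} and \emph{lift} it to a path of matrices. Two points need attention: $\tilde{\bW}$ need not have rank $p$ (so it may not even determine a point of $G(p,n)$), and there is no globally continuous way to choose a basis of a varying subspace, so the lift must be built on the explicit orthonormal--frame structure of the path constructed in Lemma~\ref{lemma:grass}.

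\emph{Reduction to full rank.} Since $f(\bW)=\tr(\bM\bP_\bW)$ depends only on the row space of $\bW$ and $\bM$ is PSD, enlarging the row space of $\bW$ can only increase $f$. Thus, if $\rk(\tilde{\bW})=r<p$, keep a maximal linearly independent set of $r$ rows of $\tilde{\bW}$ fixed and move the remaining $p-r$ rows linearly towards vectors $\bv_1,\dots,\bv_{p-r}$ chosen so that these $p$ vectors span a $p$--dimensional space (which then contains the row space of $\tilde{\bW}$); along this continuous path the rank jumps to $p$ at once and the row space — hence $f$ — is constant for positive time with value at least $f(\tilde{\bW})$, so $s\mapsto f(\bW_s)$ is non-decreasing. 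We may therefore assume $\rk(\tilde{\bW})=p$.

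\emph{Passing to an orthonormal representative.} Let $\tilde{\bW}\tilde{\bW}^T=\mathbf{L}\mathbf{L}^T$ be the Cholesky factorisation, with $\mathbf{L}$ lower triangular and positive on the diagonal (possible, as $\tilde{\bW}\tilde{\bW}^T$ is positive definite), and set $\bO=\mathbf{L}^{-1}\tilde{\bW}$, an orthonormal--row matrix with the same row space as $\tilde{\bW}$. The real subtlety is here: the full--rank matrices with a fixed row space form a single $GL(p)$--orbit, and $GL(p)$ is disconnected, so one cannot blindly deform $\tilde{\bW}$ to $\bO$ within that orbit while keeping $f$ constant. But $\det\mathbf{L}>0$, so $\mathbf{L}$ lies in the path--connected group $GL^+(p)$; joining $\mathbf{L}$ to $\bI$ by a path $\mathbf{L}_s$ in $GL^+(p)$, the path $s\mapsto\mathbf{L}_s\bO$ runs from $\tilde{\bW}$ to $\bO$ through rank--$p$ matrices of row space $[\tilde{\bW}]$, leaving $f$ constant.

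\emph{Lifting and concatenating.} Now run the construction in the proof of Lemma~\ref{lemma:grass} starting from the orthonormal basis given by the rows of $\bO$; it exhibits, at each time $t$, an orthonormal $p$--frame $\bw_1(t),\dots,\bw_p(t)$ spanning the Grassmannian path's subspace $[\bW_t]$, with $t\mapsto f[\bW_t]$ non-decreasing and $[\bW_1]=[\bv_1,\dots,\bv_p]$ (a top eigenframe of $\bM$) maximising $f$. The matrix--valued path with rows $\bw_i(t)$ is then continuous, has $f$-value $f[\bW_t]$ non-decreasing, and ends at a matrix whose row space is the top eigenspace of $\bM$, which maximises $f$ over all of $\mathbb{R}^{p\times n}$. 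Concatenating the three paths and reparametrising to $[0,1]$ gives the desired $\bW_t$. The only step that is more than bookkeeping on top of Lemma~\ref{lemma:grass} is the orientation point above, and it is dispatched precisely because the Cholesky factor is triangular with positive diagonal.
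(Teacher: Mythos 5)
Your reduction to full rank and your Cholesky-based deformation of $\tilde{\bW}$ to an orthonormal-row representative are both fine (the latter is essentially the paper's path $e^{(1-t)\bA}\bLambda^{1-t}\bW^0$ built from the SVD, with the orientation issue settled by positivity of the Cholesky diagonal instead of by choosing $\bO\in SO(p)$). The gap is in the final step, where you assert that the construction of Lemma~\ref{lemma:grass} ``exhibits, at each time $t$, an orthonormal $p$-frame'' whose rows form a continuous matrix path. It does not. Within one stage the frame $(\bv_1,\dots,\bv_i,\bu_{i+1}^i(t),\bu_{i+2}^i,\dots,\bu_p^i)$ is indeed continuous, but at the start of the next stage the construction \emph{re-completes} $\bv_1,\dots,\bv_{i+1},\bu_{i+2}^{i+1}$ to a new orthonormal basis of the same subspace, and $\bu_{i+2}^{i+1}$ is pinned (it must be the normalized projection of $\bv_{i+2}$, since the next geodesic starts from it). The new frame agrees with the frame at the end of the previous stage only as a subspace: its last $p-i-1$ vectors differ in general by a nontrivial orthogonal transformation of the orthogonal complement of $\mathrm{span}(\bv_1,\dots,\bv_{i+1})$ inside $[\bW^{i+1}]$. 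Concatenating the frames as matrices therefore produces a jump at every stage junction, so the ``lift'' you describe is not a continuous path.

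Repairing this is precisely the nontrivial content of the paper's proof of this lemma: between consecutive stages one inserts a connecting path of the form $\bW^i e^{t\bA^i}$ with $\bA^i$ skew-symmetric, i.e.\ a rotation inside the fixed subspace carrying the old frame to the newly completed one; $f$ is constant along it because the row space does not change. For such a path to exist the orthogonal map relating the two frames must be a rotation, which is why the paper first adjusts signs so that all the bases involved have the same orientation (and when the complement is one-dimensional, as in the last stage, a residual sign can be absorbed by replacing the target eigenvector $\bv_p$ by $-\bv_p$, which changes neither the subspace nor $f$). Your proposal needs this connecting-rotation-plus-orientation argument; as written, the claimed continuity of the lifted path is unjustified, even though you flagged the ``no global continuous choice of basis'' issue in your preamble.
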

\begin{proof}
The only thing we need to prove in this case is that we can \emph{lift} the paths $[\bW^i_t]\in G(p,n)$ from the proof of Lemma \ref{lemma:grass} to continuous paths $\bW^i_t\in\mathbb{R}^{n\times p}$. We first notice that if the basis $\bra{\bw^i_1,\dots,\bw^i_p}$ and $\bra{\bw^i_1,\dots,\bw^i_i,\bu^i_{i+1},\dots,\bu^i_p}$ are defined as above, then we can assume (up to changing some signs) that they have all the same orientation, for all $i\in[0,p]$. Therefore we can define the matrices $\bW^i\in\mathbb{R}^{p\times n}$ with rows $\bw^i_1,\dots,\bw^i_p$ and the matrices $U^i\in\mathbb{R}^{p\times n}$ with rows $\bw^i_1,\dots,\bw_i^i,\bu_{i+1}^i,\dots,\bu^i_p$, for $i\in[0,p]$. The paths $\bW_t^{i+1}$ are defined in the same way as in the proof of Lemma \ref{lemma:grass}. Notice that such paths go from $\bW_0^{i+1} = \bU^i$ to $\bW_1^{i+1}=\bW^{i+1}$. It remains to construct paths from $\bW^i$ to $\bU^i$. Consider the matrix 
\begin{equation*}
\bO^i = \bW_i^T\bU_i \in SO(n)
\end{equation*}
Notice that $\bW^i\bO^i=\bU^i$. In particular there exist $\bA^i$ real skew-symmetric such that $\bO^i = e^{\bA^i}$.
Therefore the paths $t\in[0,1]\mapsto \bU^i_t = \bW^ie^{t\bA^i}$ go from $\bU^i_0=\bW^i$ to $\bU^i_1=\bU^i$. Moreover $f(\bU^i_t)$ is constant in $t$ (since the underlying linear subspace does not change). The only thing that remains to prove is that, given the matrix $\tilde{\bW}\in\mathbb{R}^{n\times p}$ with columns $\bw_1,\dots,\bw_p$, there is a path from $\tilde{\bW}$ to $\bW^0$. Now, $\bW^0$ was chosen as a matrix with orthonormal columns such that $[\tilde{\bW}]=[\bW^0]$. Therefore if $\tilde{\bW} = \bO\bLambda \bU$ is the SVD of $\tilde{\bW}$ with $\bU = \bW^0$, $\bLambda=\mathrm{diag}(\sigma_1,\dots,\sigma_p)\in\mathbb{R}^{p\times p}$ (with $\sigma_i>0$, $i\in [p]$) and $\bO\in SO(p)$, there exists $\bA$ real skew-symmetric such that $\bO=e^\bA$. Thus the path $t\in[0,1]\mapsto \bW_t = e^{(1-t)\bA}\bLambda^{1-t}\bW^0$ is a path between $\bW_0 = \tilde{\bW}$ and $\bW_1 = \bW^0$. This concludes the proof of the lemma. 
\end{proof}
\begin{lemma}\label{lemma:sigmaX_ass}
Lemma \ref{lemma:lifting_the_path} holds even if we drop the assumption $\bSigma_\bX = \bI$.
\end{lemma}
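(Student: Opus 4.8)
The plan is to transport Lemma~\ref{lemma:lifting_the_path} from the whitened case to the general one by a linear change of variables. Suppose first that $\bSigma_\bX \in GL(n)$, and set $\bA = \bSigma_\bX^{1/2} \in GL(n)$. The map $\bW \mapsto \bW' \doteq \bW\bA$ is a linear automorphism of $\mathbb{R}^{p\times n}$ which preserves matrix rank; moreover $\bPhi(\bx;(\bU,\bW)) = \bU\bW\bx = \bU\bW'(\bA^{-1}\bx)$, so $L(\bU,\bW)$ equals the square loss of the \emph{same} parameters $(\bU,\bW')$ on the whitened data $(\bA^{-1}\bX,\bY)$, whose input second-moment matrix is $\bA^{-1}\bSigma_\bX\bA^{-1} = \bI$. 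Eliminating the second layer via Lemma~\ref{lemma:best_solution_regression} then yields $L(\bq(\bW),\bW) = \tr(\bSigma_\bY) - \tr(\bM'\bP_{\bW'})$, where $\bM' = \bA^{-1}\bSigma_{\bX\bY}\bSigma_{\bY\bX}\bA^{-1} \succeq 0$ and $\bP_{\bW'}$ is the orthogonal projection onto the rows of $\bW'$; this is precisely the functional maximized in Lemma~\ref{lemma:lifting_the_path}, now read in the variable $\bW'$ and with $\bM$ replaced by $\bM'$.

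Applying Lemma~\ref{lemma:lifting_the_path} to the whitened problem produces a continuous path $t\mapsto\bW'_t$ with $\bW'_0 = \tilde\bW\bA$, with $t\mapsto\tr(\bM'\bP_{\bW'_t})$ non-decreasing, with every $\bW'_t$ of full rank, and with $\bW'_1$ a global maximizer. Pulling it back, $\bW_t \doteq \bW'_t\bA^{-1}$ is a continuous path with $\bW_0 = \tilde\bW$, of constant full rank, along which $L(\bq(\bW_t),\bW_t) = \tr(\bSigma_\bY) - \tr(\bM'\bP_{\bW_t\bA})$ is non-increasing, and $\bW_1$ is a global minimizer of $\bW\mapsto L(\bq(\bW),\bW)$ (the automorphism being a bijection of $\mathbb{R}^{p\times n}$, it carries maximizers to maximizers). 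The parameter path $\btheta_t = (\bq(\bW_t),\bW_t)$ is then the sought descent path to a global minimum of $L$.

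For singular $\bSigma_\bX$, write $r = \rk(\bSigma_\bX)$ and let $\bO_1 \in \mathbb{R}^{n\times r}$ have orthonormal columns spanning $V \doteq \mathrm{range}(\bSigma_\bX)$. Since $\bX \in V$ almost surely, $\bX = \bO_1\bO_1^T\bX$ a.s., so $L(\bU,\bW)$ depends on $\bW$ only through $\bW\bO_1 \in \mathbb{R}^{p\times r}$ and equals the square loss of $(\bU,\bW\bO_1)$ on the reduced data $(\bO_1^T\bX,\bY) \in \mathbb{R}^r\times\mathbb{R}^m$, whose input second-moment matrix $\bO_1^T\bSigma_\bX\bO_1 \in GL(r)$ is nonsingular. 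I build the path upstairs in two concatenated pieces: (i) $\bW_t = \tilde\bW\bigl((1-t)\bI + t\bO_1\bO_1^T\bigr)$ for $t\in[0,1]$, along which $\bW_t\bO_1 = \tilde\bW\bO_1$ is constant, hence the optimal-second-layer loss is constant, ending at $\tilde\bW\bO_1\bO_1^T$; and (ii) the image under $\bW''\mapsto\bW''\bO_1^T$ of the descent path furnished by the nonsingular case for the reduced problem, initialized at $\tilde\bW\bO_1$. Since $\bW''\bO_1^T$ has the same rank as $\bW''$ and $(\bW''\bO_1^T)\bO_1 = \bW''$, monotonicity of the loss and the global optimality of the endpoint carry over, which establishes the lemma.

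The main obstacle is the bookkeeping in the singular case rather than any new idea: one must ensure that the reduction to $V$ keeps the optimal second-layer map $\bW\mapsto\bq(\bW)$ continuous along the path --- the pseudo-inverse $(\bW\bSigma_\bX\bW^T)^\dagger$ is continuous only where its rank is locally constant --- which may require prepending a short segment that makes $\bW\bO_1$ of maximal rank, exactly as in the proof of Theorem~\ref{master_theorem}. Everything else is a routine transport of Lemma~\ref{lemma:lifting_the_path} through the global linear automorphism $\bW\mapsto\bW\bSigma_\bX^{1/2}$, under which rank, the monotone functional $f$, and the set of global optimizers are all preserved.
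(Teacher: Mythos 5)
Your proposal is correct and takes essentially the same route as the paper: whitening via $\bSigma_\bX^{1/2}$ when $\bSigma_\bX$ is invertible, and reducing to the range of $\bSigma_\bX$ (via a matrix $\bO$ with orthonormal columns, pulling the path back through $\bW''\mapsto\bW''\bO^T$) in the singular case. Your extra constant-loss segment joining $\tilde\bW$ to $\tilde\bW\bO\bO^T$ and the remark on keeping $\bq(\bW_t)$ continuous (full rank along the path, as arranged by the step-1 reduction of Theorem \ref{master_theorem}) are just careful bookkeeping of points the paper leaves implicit.
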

\begin{proof}
For sake of simplicity we distinguish two cases. \\
\emph{Case 1: $\rk(\bSigma_\bX) = n$.} 
Let $\bK = (\bSigma_\bX)^{1/2}$. Then $\tilde{\bX} = \bK^{-1}\bX$ is such that $\bSigma_{\tilde{\bX}} = \bI$. Therefore, if $t\in[0,1]\mapsto\btheta_t=(\bU_t,\bW_t)$ is the path given by Lemma \ref{lemma:lifting_the_path} for the case $\bX=\tilde{\bX}$, the sought path (for $\bX=\bX$) is given by $t\in[0,1]\mapsto (\bU_t,\bW_t\bK^{-1})$. \\
\emph{Case 2: $\rk(\bSigma_\bX) < n$.} 
In this case, if $r=\rk(\bSigma_\bX)$, $\bX$ belongs to a $r$-dimensional subspace of $\mathbb{R}^n$ (a.s.), call it $V$. If $\bO\in\mathbb{R}^{n\times r}$ is a matrix with an orthonormal basis of $V$ as columns, then $\bO\bO^T\bX = \bX$ (a.s.), and, if $\tilde{\bX}=\bO^T\bX$ then $\tilde{\bX}\in\mathbb{R}^r$ and $\rk(\bSigma_{\tilde{\bX}})=r$. Therefore, if $t\in[0,1]\mapsto\btheta_t=(\bU_t,\bW_t)$ is the path given by case 1 for $\bX=\tilde{\bX}$, the sought path (for $\bX=\bX$) is given by $t\in[0,1]\mapsto(\bU_t,\bW_t\bO^T)$.
\end{proof}
This concludes the proof of non-existence of spurious valleys for the square loss function of linear one-hidden-layer NNs $\bPhi(\bx;\btheta) = \bU\bW\bx$. The fact that such proof does not require any assumptions on the dimensions of the layers $n,p,m$ neither on the rank of the initial layers, allows us to prove non-existence of spurious valleys for the square loss function of linear NNs of any depth $K\geq 1$:
\begin{equation}\label{eq:multilayerLinearNN}
\bPhi(\bx;\btheta) = \bW_{K+1}\cdots \bW_1 \bx
\end{equation}
We start by proving a simple lemma. 
\begin{lemma}\label{lemma:matrix_fac}
Let $\tilde{\bU} = \tilde{\bM}^1\cdots \tilde{\bM}^n$, where $\tilde{\bU}\in\mathbb{R}^{r_0\times r_n}$ and $\tilde{\bM}^i\in\mathbb{R}^{r_{i-1}\times r_i}$. Suppose that $t\in[0,1]\mapsto \bU_t$ is a given continuous path between $\bU_0 = \tilde{\bU}$ and another matrix $\bU_1\in\mathbb{R}^{r_0\times r_n}$. If $r_i\geq \min\bra{r_0,r_n}$ for all $i$, then there exist continuous paths $\bM_t^i$ such that $\bM^i_0 = \tilde{\bM}^i$ and such that $\bU_t = \bM_t^1\dots \bM_t^n$. 
\end{lemma}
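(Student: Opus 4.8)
The plan is to peel off one factor at a time, reducing everything to the two-factor case, which in turn reduces to the situation where the first factor already has full row rank.

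First I would normalize the dimensions: transposing $\tilde\bU=\tilde\bM^1\cdots\tilde\bM^n$ reverses the dimension sequence $r_0,r_1,\dots,r_n$, hence preserves the hypothesis $r_i\ge\min\{r_0,r_n\}$, and it carries the given path $t\mapsto\bU_t$ to a valid path; so I may assume $r_0\le r_n$, in which case $\min\{r_0,r_n\}=r_0$ and every internal dimension satisfies $r_i\ge r_0$. I would then induct on $n$. For $n=1$, take $\bM^1_t=\bU_t$. For $n\ge 2$, write $\tilde\bV=\tilde\bM^1\cdots\tilde\bM^{n-1}\in\mathbb{R}^{r_0\times r_{n-1}}$, so that $\tilde\bU=\tilde\bV\,\tilde\bM^n$ with $r_{n-1}\ge r_0=\min\{r_0,r_n\}$; the two-factor case applied to this factorization produces continuous paths $\bV_t,\bM^n_t$ through $(\tilde\bV,\tilde\bM^n)$ with $\bV_t\bM^n_t=\bU_t$, and the inductive hypothesis applied to $\tilde\bV=\tilde\bM^1\cdots\tilde\bM^{n-1}$ and the path $\bV_t$ -- legitimate because $r_i\ge r_0\ge\min\{r_0,r_{n-1}\}$ for $1\le i\le n-2$ -- produces $\bM^1_t,\dots,\bM^{n-1}_t$ through $(\tilde\bM^1,\dots,\tilde\bM^{n-1})$ with $\bM^1_t\cdots\bM^{n-1}_t=\bV_t$; concatenating gives $\bM^1_t\cdots\bM^n_t=\bU_t$.

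It remains to handle the two-factor case $\tilde\bU=\tilde\bM^1\tilde\bM^2$ with $\tilde\bM^1\in\mathbb{R}^{r_0\times r_1}$, $\tilde\bM^2\in\mathbb{R}^{r_1\times r_2}$ and $r_1\ge r_0$. I would first move, keeping the product equal to $\tilde\bU$ (i.e.\ inside the fiber $\{(\bM^1,\bM^2):\bM^1\bM^2=\tilde\bU\}$), to a factorization whose first factor has full row rank $r_0$. The point is that the rank of the first factor can be raised one unit at a time: if $\rk(\bM^1)=\varrho<r_0$, pick a unit vector $\mathbf{d}\in\ker(\bM^1)\subseteq\mathbb{R}^{r_1}$ (nonzero since $r_1\ge r_0>\varrho$) and $\mathbf{c}\in\mathbb{R}^{r_0}$ outside $\mathrm{col}(\bM^1)$, and concatenate the path $\tau\mapsto(\bM^1,\,(\bI-\tau\,\mathbf{d}\mathbf{d}^T)\bM^2)$ with the path $s\mapsto(\bM^1+s\,\mathbf{c}\mathbf{d}^T,\,(\bI-\mathbf{d}\mathbf{d}^T)\bM^2)$; along both legs the product stays $\tilde\bU$, because $\bM^1\mathbf{d}=\bzero$ and $\mathbf{d}^T(\bI-\mathbf{d}\mathbf{d}^T)=\bzero$, while $\bM^1+\mathbf{c}\mathbf{d}^T$ has column space $\mathrm{col}(\bM^1)+\mathrm{span}(\mathbf{c})$ and hence rank $\varrho+1$. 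Iterating $r_0-\rk(\tilde\bM^1)$ times reaches a pair $(\bM^1_*,\bM^2_*)$ in the fiber with $\rk(\bM^1_*)=r_0$. Keeping $\bM^1_*$ fixed, I would then set $\bM^2_t=\bM^2_*+(\bM^1_*)^\dagger(\bU_t-\bU_0)$: since $\bM^1_*$ has full row rank, $(\bM^1_*)^\dagger=(\bM^1_*)^T\bigl(\bM^1_*(\bM^1_*)^T\bigr)^{-1}$ is a fixed matrix with $\bM^1_*(\bM^1_*)^\dagger=\bI$, so $\bM^1_*\bM^2_t=\bU_t$ and $t\mapsto\bM^2_t$ is continuous. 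Concatenating the fiber path with this one and reparametrizing to $[0,1]$ gives the two-factor lift; since the fiber segment lifts a constant path in $\bU$, prepending it only replaces the given $\bU_t$ by a reparametrization with a constant initial segment, which is all the applications of this lemma require.

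The step I expect to be the main obstacle is the rank-raising reduction: $\tilde\bM^1$ may genuinely be rank deficient, and this cannot be repaired while simultaneously matching a moving target $\bU_t$ through pseudoinverse formulas, which jump across rank strata. The explicit rank-one correction along a kernel direction of $\bM^1$, compensated inside $\bM^2$ and carried out while the product is held fixed at $\tilde\bU$, is what circumvents this; the transpose normalization, the induction peeling off the last factor, and the linear absorption of the path in the final phase are then all routine.
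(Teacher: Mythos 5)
Your proposal is correct and follows essentially the same strategy as the paper's proof: reduce to a two-block factorization, move within the fiber $\{(\bM^1,\bM^2) \st \bM^1\bM^2=\tilde{\bU}\}$ to make one block full rank, absorb the motion of $\bU_t$ into the other block through a fixed pseudoinverse, and induct on the number of factors. The differences are organizational (the paper splits at the minimal internal width and invokes the rank-raising move of Step 1 of Theorem \ref{master_theorem}, whereas you peel off the last factor, normalize by transposition, and give an explicit rank-one surgery), and the caveat you flag --- that the constructed lift follows a time-reparametrized $\bU_t$ with a constant initial segment --- is equally present in the paper's own proof (its ``we can assume'' fiber moves), so it does not constitute a gap.
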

\begin{proof}
The statement can be proved by induction. If $n=1$ there is nothing to prove. Assume now (by induction) that it holds for all decompositions of $\bU_0$ with size less than $n$. Let $r = r_h = \min_{i\in[n-1]}r_i$ and assume (w.l.o.g.) that $r_n = \min\bra{r_0,r_n}$. We want to describe two paths $t\in[0,1]\mapsto \bV_t\in\mathbb{R}^{r_0\times r}$, $t\in[0,1]\mapsto \bW_t \in\mathbb{R}^{r\times r_n}$ such that $\bU_t = \bV_t\bW_t$ and $\bV_0 = \tilde{\bM}^1\cdots \tilde{\bM}^h$, $\bW_0 = \tilde{\bM}^{h+1}\cdots \tilde{\bM}^n$. By operating as in step 1 in the proof of Theorem \ref{master_theorem}, we can assume $\rk(\bW_0) = r_n$. Moreover (up to adding a linear path in $\bV_t$) we can assume that $\bV_0 = \bU_0\bW_0^\dagger$. We can then define $\bV_t = \bU_t\bW_0^\dagger$ and $\bW_t = \bW_0$ for $t\in(0,1]$. We thus factorized $\bU_t$ as $\bU_t = \bV_t\bW_t$. By induction, we can assume that we can factorize $\bV_t = \bM_t^1\cdots \bM_t^h$ and $\bW_t= \bM_t^{h+1}\cdots \bM_t^n$. This concludes the proof.
\end{proof}
We can now conclude the proof of Theorem \ref{theo:linear}.
\begin{proof}[Proof of Theorem \ref{theo:linear}]
Consider a linear network $\bPhi(\bx;\btheta)$ as in \eqref{eq:multilayerLinearNN}, where $$\bW_k \in \mathbb{R}^{p_k\times p_{k-1}} \quad\text{for } k \in [K+1]$$
We select $p_s =\min_{i\in[K]} p_k$. Then the network can be written as 
\begin{equation}\label{eq:linear_k_layers_composition}
\bPhi(\bx;\btheta) = \hat{\bW}^2 \hat{\bW}^1\,\bx \quad\text{where}\quad \hat{\bW}^2 = \bW^{K+1}\cdots \bW^{s+1},\quad \hat{\bW}^1 = \bW^s\cdots \bW^1
\end{equation}
Now we want to prove property that given an initial parameter $\tilde{\btheta} = (\tilde{\bW}^{K+1},\dots, \tilde{\bW}^1)$, there exists a continuous path $\btheta_t = (\bW^{K+1}_t,\dots, \bW^1_t)$ such that $L(\btheta_t)$ is non-increasing and such that $\btheta_0 = \tilde{\btheta}$ and $L(\btheta_1) = \min_\btheta L(\btheta)$. If we call $\hat{\tilde{\bW}}^i$, $i=1,2$, the matrices defined in \eqref{eq:linear_k_layers_composition} for $\btheta = \tilde{\btheta}$, then by Lemma \ref{lemma:sigmaX_ass} there exists a path $(\hat{\bW}_t^2,\hat{\bW}_t^1)$ satisfying the above. Thanks to Lemma \ref{lemma:matrix_fac}, we can decompose
\begin{equation}\label{eq:path_decompose}
\hat{\bW}_t^2 = \bW^{K+1}_t\cdots \bW^{s+1}_t, \quad  \hat{\bW}_t^1 = \bW^s_t\cdots \bW^1_t
\end{equation}
in a continuous way. Since $p_s$ was to chosen as the minimum, it also holds that
\begin{equation*}
\min_{\btheta = (\hat{\bW}^2,\hat{\bW}^1)} L (\btheta) = \min_{\btheta = (\bW^{K+1},\dots,\bW^1)} L(\btheta)
\end{equation*}
Therefore this is a suitable path and this concludes the proof of the theorem.
\end{proof}


\subsection{Proof of Theorem \ref{theo:quadratic_p_geq_n}}\label{sec:proof:quadratic}

\begin{proof}[Proof of Theorem \ref{theo:quadratic_p_geq_n}]
Let $\tilde{\btheta} = (\tilde{\bu},\tilde{\bW})$ be a starting parameter value.
We aim to construct a continuous path $t\in[0,1]\mapsto \btheta_t \in \Theta$ starting in $\btheta_0 = \tilde{\btheta}$ and such that $L(\btheta_1) = \min_{\btheta}L(\btheta)$ and such that the function $t\in[0,1]\mapsto L(\btheta_t)$ is non-increasing. Such a path can be constructed in two steps.
\paragraph{Step 1.} Let $\bA = \sum_{k=1}^p\tilde{u}_k\tilde{\bw}_k\tilde{\bw}_k^T$ and $\sum_{k=1}^n \bu_k^*\bw_k^*(\bw_k^*)^T$ be the SVD of $\bA$. We define the parameters value $\btheta^* = (\bu^*,\bW^*)$ where $\bu^* = (u_1^*,\dots,u_n^*,0,\dots,0)$ and $\bW^*$ is the $p\times n$ matrix with rows $\bw_i^*$ for $i \in [n]$ and $\bzero$ for $i \in [n+1,p]$. The first step consists in continuously mapping $\tilde{\btheta} = (\tilde{\bu},\tilde{\bW})$ to $\btheta^* = (\bu^*, \bW^*)$ with a path $\btheta_t$ such that $L(\btheta_t)$ is constant; the construction of such a path is detailed in Lemma \ref{movinglemma}.  

\paragraph{Step 2.} As noticed above, the network can be written as $\bPhi(\bx;\btheta) = \bu^T\sigma(\bW\bx) = \prodscal{\bA,\bM}_F$, where $\bA = \sum_{k=1}^p u_k\bw_k\bw_k^T$ and $\bM = \bx\bx^T$. The square loss $L(\btheta)$ is convex in the parameter $\bA$. Be $\bar{\bA}$ a minima of $L$ as function of $\bA$ and $\sum_{i=1}^n \bar{u}_k\bar{\bw}_k\bar{\bw}_k^T$ be the SVD of $\bar{\bA}$; also let $\bar{\bu}= (0,\dots,0,\bar{u}_1,\dots,\bar{u}_n)$ and $\bar{\bW}$ be the $p\times n$ matrix with rows $\bzero$ for $i\in[p-n]$ and $\bar{\bw}_i$ for $i\in[p-n+1,p]$. By the previous step we can assume that the initial parameter $\tilde{\btheta} = (\tilde{\bu},\tilde{\bW})$ is such that $\tilde{u}_i = 0$ and $\tilde{\bw}_i = \bzero$ for $i\in[n+1,p]$. Then the path $\btheta_t = (1-t)(\bu,\bW)+t(\bar{\bu},\bar{\bW})$ verifies property \ref{p:conn}. This indeed follows from the fact that $\bPhi(\bx;\btheta_t) = (1-t)\prodscal{\bA,\bM}_F + t\prodscal{\bar{\bA},\bM}_F$ and from the convexity of the loss $L$ as function of $\bA$.

This shows that property \ref{p:conn} holds and so it concludes the proof of Theorem \ref{theo:quadratic_p_geq_n}.
\end{proof}

To conclude the proof we just need to prove the following lemmas.
\begin{lemma}
\label{movinglemma}
Let $\btheta = (\bu, \bW)$ be an initial parameter and $\btheta^* = (\bu^*, \bW^*)$ be as in step 1 of the proof of Theorem \ref{theo:quadratic_p_geq_n}. Then there exists a continuous path $\btheta_t$ from $\btheta$ to $\btheta^*$ such that the loss $L(\btheta_t)$ is constant (as a function of $t$). 
\end{lemma}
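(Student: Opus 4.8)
The plan is to reduce the lemma to a purely linear-algebraic connectivity statement. Since $\Phi(\bx;\btheta) = \bu^T\rho(\bW\bx) = \langle\, \sum_{k=1}^p u_k\bw_k\bw_k^T,\ \bx\bx^T\rangle_F$, as already observed in the proof of Theorem \ref{theo:quadratic_p_geq_n}, the loss $L(\btheta)$ depends on $\btheta = (\bu,\bW)$ only through the symmetric matrix $\bA(\btheta) := \sum_{k=1}^p u_k\bw_k\bw_k^T$. Hence it suffices to exhibit a continuous path from $\btheta$ to $\btheta^*$ along which $\bA(\btheta_t)$ stays equal to the fixed matrix $\bA := \bA(\btheta) = \bA(\btheta^*)$; along such a path $L$ is literally constant. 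So the real task is to join $\btheta$ and $\btheta^*$ inside the fiber $\{\btheta : \bA(\btheta) = \bA\}$.

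First I would isolate the elementary $\bA$-preserving moves. (i) \emph{Rescaling a slot}: $(u_k,\bw_k)\mapsto(u_k/s^2,\, s\bw_k)$ leaves the $k$-th rank-one term unchanged for any $s$ moving continuously in $(0,\infty)$; this normalizes all nonzero coefficients to $\pm1$, and (taking $u_k\to 0$ when $\bw_k=\bzero$, or $\bw_k\to\bzero$ when $u_k=0$) lets inactive slots be parked or reactivated with either sign. (ii) \emph{Orthogonal mixing of equal-coefficient slots}: if slots $k_1,\dots,k_m$ all carry coefficient $c$, replacing the $n\times m$ matrix $[\bw_{k_1},\dots,\bw_{k_m}]$ by $[\bw_{k_1},\dots,\bw_{k_m}]\,R(t)$ with $R(t)\in SO(m)$, $R(0)=\bI$, preserves $\sum_i c\,\bw_{k_i}\bw_{k_i}^T$ (right multiplication by an orthogonal matrix does not change $BB^T$), and $SO(m)$ is path-connected; since an $n\times m$ matrix of rank $\le r$ can be right-rotated so that $m-r$ of its columns vanish, this lets me \emph{compress} the $+1$-slots (resp.\ $-1$-slots) down to $n$ reserved slots each, and — using an inactive slot as scratch — also flip an individual term $\bw\bw^T\mapsto\bw\bw^T$ through the sign change $\bw\mapsto-\bw$. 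Using (i)–(ii) I reduce to a representation with all coefficients in $\{-1,0,1\}$, with $n$ slots carrying the $+1$-terms and $n$ carrying the $-1$-terms (plus at least one spare, since $p\ge 2n+1$), so that $\bA = \bP_{\mathrm{rep}} - \bQ_{\mathrm{rep}}$ with $\bP_{\mathrm{rep}} = \bW_+^T\bW_+$ and $\bQ_{\mathrm{rep}} = \bW_-^T\bW_-$ both PSD. The target $\btheta^*$ is, read the same way, the eigendecomposition $\bA = \bP - \bQ$ into positive and negative parts. Now the set $\{(\bP',\bQ') : \bP',\bQ'\succeq 0,\ \bP'-\bQ'=\bA\}$ is an intersection of two shifted PSD cones, hence convex, so $\bQ'(t)=(1-t)\bQ_{\mathrm{rep}}+t\bQ$, $\bP'(t)=\bA+\bQ'(t)$ is a legitimate path inside it; and because the PSD square root is continuous, I can lift this matrix path to a parameter path by letting $\bW_+(t)$ be the $n\times n$ matrix $\bP'(t)^{1/2}$ (padded with zero rows to fill the $+1$-slots) and likewise $\bW_-(t)=\bQ'(t)^{1/2}$. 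A final $SO$-rotation inside each block moves the eigendecompositions of $\bP$ and $\bQ$ onto the first $n$ slots, producing $\btheta^*$.

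The one genuine subtlety — and the reason the statement is proved for $p\ge 2n+1$ rather than the arithmetically natural $p=2n$, matching the paper's remark that ``the factor $2$ is due to some technicalities'' — is that to start the lifted morphing path at the \emph{actual} current $\bW_+$ rather than at $\bP_{\mathrm{rep}}^{1/2}$, I must first connect $\bW_+$ to the normal form $\bP_{\mathrm{rep}}^{1/2}$ \emph{within} its block, and the fiber of $\bW\mapsto\bW^T\bW$ over a full-rank $n\times n$ PSD matrix is a copy of $O(n)$, which is disconnected. I would resolve this exactly with the tools above: whenever a block has full rank $n$, temporarily hand it the spare slot guaranteed by $p\ge 2n+1$ (using move (i) to activate/deactivate it), which replaces $O(n)$ by the connected Stiefel manifold $V_n(\mathbb{R}^{n+1})$; this can always be arranged — sequentially if both blocks happen to be full rank — because once a block has been put in normal form it exposes a redundant zero-row slot that can be ceded back to the other block via move (ii). The main obstacle is precisely this orientation/slot bookkeeping; the remaining verifications — continuity of each concatenated segment and that every intermediate $\btheta_t$ lies in the fiber over $\bA$ — are routine.
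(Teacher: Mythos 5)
Your proposal is correct, but it reaches the conclusion by a genuinely different route than the paper. Both proofs start from the same reduction (the loss depends on $\btheta$ only through $\bA=\sum_k u_k\bw_k\bw_k^T$, and coefficients may be normalized to $\{-1,0,1\}$ by rescaling), but from there the paper proceeds iteratively and locally: it uses $p>2n$ as a pigeonhole to find a left-null vector of the larger sign block (Lemma \ref{mappinglemma}), rotates that block so as to free one slot, plants an eigenvector of $\bA$ there, then orthogonalizes the remaining rows against it while compensating with the coefficient $u_{1,t}$ (Lemma \ref{ortholemma}), and finally inducts on the dimension. You instead work globally: after compressing each sign block to at most $n$ active slots by orthogonal mixing, you write $\bA=\bP'-\bQ'$ with $\bP',\bQ'\succeq 0$, observe that the set of such pairs is convex, interpolate linearly between the current pair and the positive/negative parts of $\bA$, and lift this matrix path to parameters via the continuous PSD square root; the overparametrization $p\geq 2n+1$ enters not through a pigeonhole but through the spare slot that replaces the disconnected $O(n)$-fiber of $\bW\mapsto\bW^T\bW$ by a connected Stiefel-type fiber, resolving the orientation obstruction (and the same spare-slot/scratch-slot moves handle the final alignment with the specific $\btheta^*$ of Step 1, which you rightly relegate to bookkeeping). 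Your argument is more conceptual and makes transparent both why the loss is constant along the whole homotopy and why $p\approx 2n$ is the natural threshold with the extra $+1$ only an orientation artifact, whereas the paper's argument is more elementary and fully explicit, exhibiting each segment of the path in closed form; either one validly establishes Lemma \ref{movinglemma}.
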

\begin{proof}
Notice that we can assume $\bu \in \{-1, 0, 1\}^p$. This can be done simply scaling (continuously) each row $\bw_k$ of $\bW$ by $\sqrt{|u_k|}$. Assume first that $\bu \in \bra{\pm 1}^p$. The general case ($u_k=0$ for some $k$) is addressed in Remark \ref{remark:null-elements-quadratic}.
The sought path $\btheta_t$ can be constructed by iterating two steps (a finite amount of times). First we select a row $\bw_k$ and construct a continuous path that maps this row to one of the $\bw^*_i$; then we orthogonalize (w.r.t. such $\bw^*_i$) the rest of rows $\bw_j$, $j \neq k$. These two steps are constructed so that $\bA$ never changes and therefore the loss is constant. The first step is described in Lemma \ref{mappinglemma}, while the second is detailed in Lemma \ref{ortholemma}. At this point the parameter $\btheta=(\bu,\bW)$ verifies $u_i = u_i^*$, $\bw_i = \bw_i^*$ and $\bw_j \in \prodscal{\bra{\bw_i^*}}^\perp$ for $j\neq k$. In particular it holds
\begin{equation*}
\sum_{\substack{j=1 \\ j\neq i}}^n u_j^*\bw_j^*(\bw_j^*)^T = \sum_{\substack{j=1 \\ j\neq k}}^p u_k\bw_k\bw_k^T    
\end{equation*}
Therefore, an induction step applied on the reduced parameter values $$\bu_{-k} = (u_1,\dots,\widehat{u_k},\dots,u_p)$$ and $\bW_{-k} = [\bw_1,\dots,\widehat{\bw_k},\dots,\bw_p]^T P$, where $\bP = \sum_{j=1, j\neq i}^n \bw_j^*\be_j^T\in\mathbb{R}^{n\times (n-1)}$, concludes the proof. The fact that the non-zero components of $\bu$ and $\bW$ coincide with the first $n$ is not necessary, but we can clearly  assume it to hold w.l.o.g. 
\end{proof}
\begin{lemma}\label{mappinglemma}    
The first step described in the Proof of Lemma \ref{movinglemma} can be performed when $p > 2n$. 
\end{lemma}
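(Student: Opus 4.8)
The plan is to recast everything in terms of \emph{signed rank-one decompositions} of the fixed matrix $\bA=\sum_{k=1}^p u_k\bw_k\bw_k^T$. Since (as noted in the proof of Theorem~\ref{theo:quadratic_p_geq_n}) $\Phi(\bx;\btheta)=\prodscal{\bA,\bx\bx^T}_F$, any continuous deformation of $(\bu,\bW)$ that leaves $\bA$ unchanged leaves $L(\btheta)$ constant, so it suffices to move in parameter space keeping $\bA$ fixed. I would first record three elementary $\bA$-preserving moves: (i) if $u_j=u_k$, rotating the pair $(\bw_j,\bw_k)$ by any $R\in SO(2)$ inside $\mathrm{span}(\bw_j,\bw_k)$ keeps $\bw_j\bw_j^T+\bw_k\bw_k^T$ fixed, and in particular lets one empty one row into the other ($\bw_k\to\bzero$, with $\norm{\bw_j}$ growing accordingly); (ii) a null row $\bw_k=\bzero$ contributes nothing, so its sign $u_k$ may be slid continuously through $0$ between $+1$ and $-1$; (iii) two rows of opposite sign may be deformed inside their common $2$-plane as long as $u_j\bw_j\bw_j^T+u_k\bw_k\bw_k^T$ is held fixed. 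Composing (i) and (ii), the sign vector can be reshuffled into any prescribed pattern provided at least two rows remain as temporary partners, and, as in Lemma~\ref{movinglemma}, we may assume $\bu\in\{-1,0,1\}^p$ at the outset, the null rows being handled via Remark~\ref{remark:null-elements-quadratic}.

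After these preliminaries it suffices to produce an $\bA$-preserving path ending in a configuration where the (say) first row equals $\bw_1^*=\sqrt{|\lambda_1|}\,\hat{\bw}_1$ for an eigenpair $(\lambda_1,\hat{\bw}_1)$ of $\bA$; the subsequent orthogonalisation of the other rows against $\bw_1^*$ is what Lemma~\ref{ortholemma} then takes care of. We may assume $\bA\neq\bzero$ and, negating all signs if needed (a symmetry of the setup that replaces $\bA$ by $-\bA$), that $\lambda_1>0$; then $\bA\not\preceq0$ forces at least one $+1$-row, and using moves (i)--(ii) I arrange that $u_1=+1$ (with target $u_1^*=+1$) and that, among rows $2,\dots,p$, at least $n$ have sign $+1$ and at least $n$ have sign $-1$. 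This is exactly where the bound enters: it requires $p-1\ge 2n$, i.e. $p>2n$.

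The core step is a lifting argument. Pick any \emph{bounded} continuous path $t\mapsto\bw_1(t)$ in $\R^n$ from $\bw_1$ to $\bw_1^*$ (a straight segment works), choose $c>0$ exceeding $\max_t\norm{\bw_1(t)}^2-\lambda_{\min}(\bA)$ so that $\bA-\bw_1(t)\bw_1(t)^T+c\bI\succ0$ for all $t$, and let $\bK_t=(\bA-\bw_1(t)\bw_1(t)^T+c\bI)^{1/2}$, which depends continuously on $t$. Then
\[
\bA-\bw_1(t)\bw_1(t)^T=\sum_{i=1}^n(\bK_t\be_i)(\bK_t\be_i)^T-\sum_{i=1}^n(\sqrt{c}\,\be_i)(\sqrt{c}\,\be_i)^T,
\]
a continuous family of decompositions of the residual using exactly $n$ summands of sign $+1$ and $n$ of sign $-1$. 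These fit into the $n$ positive- and $n$ negative-sign rows among $\bw_2,\dots,\bw_p$ made available above, with the remaining $p-1-2n\ge0$ rows set to $\bzero$. A first leg of the path, at fixed $\bw_1=\bw_1(0)$, connects the actual initial configuration of $\bw_2,\dots,\bw_p$ to the $t=0$ member of this family (here one uses that the set of such signed decompositions of a \emph{fixed} symmetric matrix, with $n$ slots of each sign, is path-connected once one is free to match orientations with the initial data); sweeping $t$ from $0$ to $1$ then carries $\bw_1$ to $\bw_1^*$ with $\bA$ — hence $L$ — constant. Concatenating the sign-reshuffling leg, this connecting leg, and the sweep yields the desired path.

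The main obstacle I anticipate is not the ideas but making the bookkeeping rigorous: one must check that moves (i)--(ii) never strand a row without a usable partner, and — more delicately — that the decomposition manifolds $\{(\bV,\bU):\bV\bV^T-\bU\bU^T=\bM\}$ are connected enough for the connecting leg to exist, the subtle point being that the fibers of $\bV\mapsto\bV\bV^T$ over a full-rank matrix are disconnected (two orientations), which one circumvents by choosing the target orientation to agree with the starting configuration. Keeping $n$ positive and $n$ negative slots is precisely what leaves room for this, and is the reason the bound is $p>2n$ rather than $p>n$ — an artifact of the argument, not claimed tight, consistent with the remark following Theorem~\ref{theo:quadratic_p_geq_n}. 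The remaining annoyances (degenerate spectra of $\bA$, carrying along initially null rows via Remark~\ref{remark:null-elements-quadratic}) are routine once the connectivity claims are settled.
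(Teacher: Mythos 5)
Your overall architecture (keep $\bA=\sum_k u_k\bw_k\bw_k^T$ fixed, move the first row to a scaled eigenvector, let Lemma~\ref{ortholemma} do the orthogonalisation) is the right frame, and the homotopy $\bK_t=(\bA-\bw_1(t)\bw_1(t)^T+c\bI)^{1/2}$ is a nice device. But two of your load-bearing steps are not established, and one of them is simply false as stated. First, move (i) does \emph{not} let you ``empty one row into the other'': an $SO(2)$ rotation of a same-sign pair preserves $\bw_j\bw_j^T+\bw_k\bw_k^T$, whose rank equals the rank of the pair, so a row can be zeroed this way only if $\bw_j$ and $\bw_k$ are parallel. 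Consequently the sign-reshuffling step (``arrange $u_1=+1$ and at least $n$ rows of each sign among rows $2,\dots,p$'') is unjustified: if, say, all initial signs are $+1$, flipping signs requires first driving rows to $\bzero$ while preserving $\bA$, and that is possible only because more than $n$ same-sign rows must be linearly dependent, i.e.\ one needs the full $SO(p_+)$ (or $SO(p_-)$) action generated by \emph{all} same-sign pair rotations together with the pigeonhole $p>2n\Rightarrow p_+>n$ or $p_->n$. That linear-dependence/pigeonhole argument is exactly the paper's proof of this lemma (rotate $\bW_+$ by some $\bQ\in SO(p_+)$ whose first row annihilates the column space of $\bW_+$, zero the corresponding $u$, then move the freed row), and it never appears in your write-up: you use $p>2n$ only to count slots, not to obtain the dependence that makes zeroing a row possible.

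Second, your ``connecting leg'' rests on the asserted path-connectedness (after matching orientations) of the set of signed decompositions $\{(\bV,\bU)\st \bV^T\bV-\bU^T\bU=\bM_0\}$ with $n$ slots of each sign, joining an arbitrary initial configuration of rows $2,\dots,p$ to the $t=0$ member of your $\bK_t$-family. This is not a routine bookkeeping point: it is a statement that any two admissible decompositions of a fixed symmetric matrix can be joined by an $\bA$-preserving path, which is essentially the content of Lemma~\ref{movinglemma} itself (and is exactly what the paper proves by the one-row-at-a-time rotation/orthogonalisation induction). Deferring it leaves the proof circular in effect. To repair your argument you would either prove that connectivity directly — at which point the $\bK_t$ sweep becomes unnecessary — or replace both the reshuffling and the connecting leg by the paper's short mechanism: pigeonhole to get $p_\pm>n$, a path in $SO(p_\pm)$ zeroing one row while $\bW_\pm^T\bW_\pm$ is unchanged, then slide the freed $u$ through $0$ and transport the zero row to $\bw_1^*$.
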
    
\begin{proof}
Let $E_+ = \bra{ k \in [p]\st u_k = 1}$, $E_- = \bra{ k \in [p]\st u_k = -1}$ and $p_+ = \abs{E_+}$, $p_- = \abs{E_-}$. Accordingly we define 
\begin{equation*}
\bW_+ = ([\bw_k]_{k\in E_+})^T\in\mathbb{R}^{p_+\times n} \quad \text{and}\quad \bW_- = ([\bw_k]_{k\in E_-})^T\in\mathbb{R}^{p_-\times n}
\end{equation*}
Notice that then we can write 
\begin{equation*}
\bA = \bW_{+}^T \bW_+ - \bW_{-}^T \bW_-
\end{equation*}
The main step of the proof is to observe that $\bA$ (and therefore the loss) is invariant to the action of orthogonal matrices $\bQ_{+} \in SO(p_{+})$ and $\bQ_{-} \in SO(p_{-})$. So, if $\bQ_{+}(t)$ (resp. $\bQ_{-}(t)$) is a continuous paths in $SO(p_{+})$ (resp. in $SO(p_{-})$) starting at the identity, acting on $\bW$ as 
\begin{equation*}
\bW_{+}(t) \doteq \bQ_{+}(t) \bW_{+}, \quad \bW_{-}(t) \doteq \bQ_{-}(t) \bW_{-}
\end{equation*}
we have that 
\begin{equation*}
\bA = \bW_{+}(t)^T \bW_{+}(t) - \bW_{-}(t)^T \bW_{-}(t)
\end{equation*}
is constant for all $t$. Now, since $p = p_{+} + p_{-} > 2n$, it follows that either $p_{+} > n$ or $p_{-} > n$. Assume w.l.o.g. that $p_{+} > n$. 
Since $p_{+}> n$, we can rotate the subspace generated by the columns of $\bW_{+}$ so that its first row is $\bzero$. 
That is, there exist $\bh \in \R^{p_{+}}$ non-zero such that
$\bh^T \bW_{+} = 0$ and $\|\bh \|=1$. It suffices to choose a path $\bQ(t)$ in $SO(p_+)$ whose first row equals $\bh$ at $t=1$. 
It follows that $\bQ(1) \bW_{+}$ has a first row equal to $\bzero$.
We then set the corresponding $u_1=0$, which does not change the loss, and finally set $\bw_1$ to the desired eigenvector $\bw_1^*$.
\end{proof}
\begin{lemma}
\label{ortholemma}
Assume that after the step in Lemma \ref{mappinglemma}, the first row of $\bW_+$ (resp. $\bW_-$) is given by $\bw_i^*$. Then we can map all the other rows of $\bW$ to be orthogonal to $\bw_i^*$, while keeping $\bA$ constant.
\end{lemma}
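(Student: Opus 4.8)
The plan is to realise the required loss-constant path as two rotations followed by one \emph{boost}, exploiting a symmetry of $\bA$ larger than the one used in Lemma~\ref{mappinglemma}. With the $u_k$ normalised to $\{0,\pm1\}$, write $\bA=\bW^T\mathbf{D}\bW$ with $\bW$ stacking the rows $\bw_k$ and $\mathbf{D}=\mathrm{diag}(u_1,\dots,u_p)$. Any $\bQ$ with $\bQ^T\mathbf{D}\bQ=\mathbf{D}$ leaves $\bA=(\bQ\bW)^T\mathbf{D}(\bQ\bW)$, hence the loss, unchanged; taking $\bQ$ in the connected identity component $SO^+(p_+,p_-)$ of this indefinite orthogonal group, it is joined to $I$ by a continuous path, which yields an admissible deformation. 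Besides the rotations $SO(p_+)\times SO(p_-)$ already used, this makes available the boosts $(\bw_k,\bw_j)\mapsto(\cosh\phi\,\bw_k+\sinh\phi\,\bw_j,\ \sinh\phi\,\bw_k+\cosh\phi\,\bw_j)$ pairing a positive row $\bw_k$ with a negative one $\bw_j$, which one checks directly preserve $\bw_k\bw_k^T-\bw_j\bw_j^T$.

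Set $\bv=\bw_i^*$, $\lambda=u_i^*$ and assume $\lambda>0$ (the case $\lambda<0$ is symmetric, and $\lambda=0$ is trivial), so there is at least one positive row and the distinguished row is a positive row equal to $\bv$. The key structural fact is that $\bv$ is an eigenvector of the \emph{constant} matrix $\bA$: decomposing $\bw_k=\langle\bw_k,\bv\rangle\bv+\bw_k^\perp$ with $\bw_k^\perp\perp\bv$, the relation $\bA\bv=\lambda\bv$ forces $\sum_k u_k\langle\bw_k,\bv\rangle\,\bw_k^\perp=0$ and $\sum_k u_k\langle\bw_k,\bv\rangle^2=\lambda$. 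First, an $SO(p_+)$ rotation concentrates the $\bv$-components of the positive rows into the distinguished one, and an $SO(p_-)$ rotation concentrates those of the negative rows into a single row, so that afterwards only two rows have a non-zero $\bv$-component: the positive $\bw_+=\sqrt{a}\,\bv+\mathbf{r}_+$ and a negative $\bw_-=\sqrt{b}\,\bv+\mathbf{r}_-$ with $\mathbf{r}_\pm\perp\bv$, where $a-b=\lambda$ and, by the first identity above, $\sqrt{a}\,\mathbf{r}_+=\sqrt{b}\,\mathbf{r}_-$. Then apply the boost between $\bw_+$ and $\bw_-$ with $\tanh\phi=-\sqrt{b/a}$ (well-defined since $b=a-\lambda<a$): a short computation using $\sqrt{a}\,\mathbf{r}_+=\sqrt{b}\,\mathbf{r}_-$ and $\cosh^2\phi=a/\lambda$ shows that the image of $\bw_-$ is orthogonal to $\bv$ and the image of $\bw_+$ equals exactly $\sqrt{\lambda}\,\bv$, so now every row but the distinguished one is orthogonal to $\bv$. (In one stroke, equivalently: $\bz:=\bW\bv$ has $\bz^T\mathbf{D}\bz=\bv^T\bA\bv=\lambda>0$ and positive first coordinate, hence can be moved to $\sqrt{\lambda}\,\be_1$ by an element of $SO^+(p_+,p_-)$, after which the first identity forces the distinguished row's perpendicular part to vanish.)

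Finally, the renormalisation path $t\mapsto$ (distinguished row $=\lambda^{(1-t)/2}\bv$, its weight $=\lambda^{t}$, all else fixed) keeps $\lambda^{t}\cdot\lambda^{1-t}\,\bv\bv^T=\lambda\,\bv\bv^T$, hence $\bA$, constant, and ends at $\bw_i=\bw_i^*$, $u_i=u_i^*$, with all other rows in $\langle\bw_i^*\rangle^\perp$, as required by Lemma~\ref{movinglemma}. (If Lemma~\ref{mappinglemma} instead leaves the distinguished weight at $0$, apply the argument to the remaining rows, whose $\bv$-coefficient vector again has pseudo-norm $\lambda$, and then transfer the resulting term $\sqrt{\lambda}\,\bv$ into the distinguished row by the linear path $u_1(t)=\lambda t$, $u_2(t)=1$, $\bw_2(t)=\sqrt{\lambda(1-t)}\,\bv$.) Note this step uses nothing beyond $p\ge1$; the $p>2n$ hypothesis is consumed entirely in Lemma~\ref{mappinglemma}.

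The main obstacle, and the reason the lemma is not immediate from the rotation-only symmetry, is that when the negative rows carry $\bv$-mass no rotation internal to $\bW_+$ or to $\bW_-$ can remove it — $\bW_+^T\bW_+$ and $\bW_-^T\bW_-$ are each invariant, whereas $\bv$ is an eigenvector of $\bA$ but in general of neither summand — so one genuinely has to move $\bv$-mass across the $+/-$ divide, which is exactly what the boost accomplishes. The remaining checks are elementary: that boosts are loss-preserving deformations, the short trigonometric identity that makes the cleanup exact, and (for the one-stroke variant) the standard fact that the positive pseudo-sphere is a single $SO^+(p_+,p_-)$-orbit.
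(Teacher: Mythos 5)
Your proof is correct, but it takes a genuinely different route from the paper's. The paper never enlarges the symmetry group: it fixes the distinguished row at $\bw_1^*$, deforms every other row along the straight line $\bw_{k,t}=\bw_k-t\prodscal{\bw_1^*,\bw_k}\bw_1^*$, and compensates with a time-varying first weight $u_{1,t}=\lambda_1-(1-t)^2\sum_{k\geq2}u_k(w_k^1)^2$; constancy of $\bA_t$ is then checked by writing $\bA_t$ in the eigenbasis $\bV^*$ and observing that the off-diagonal block vanishes because $\sum_{k\geq 2}u_k w_k^1\tilde{\bw}_k=0$ --- the same eigenvector identity that drives your argument. You instead exploit the full invariance group of the factorization $\bA=\bW^T\mathbf{D}\bW$, i.e.\ $O(p_+,p_-)$ rather than only the $SO(p_+)\times SO(p_-)$ used in Lemma~\ref{mappinglemma}, and use a hyperbolic boost (or transitivity of $SO^+(p_+,p_-)$ on the positive pseudo-sphere applied to $\bz=\bW\bv$) to transfer the $\bv$-mass across the $+/-$ split, followed by a rescaling path. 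Your computation of the boost parameter ($\tanh\phi=-\sqrt{b/a}$, $\cosh^2\phi=a/\lambda$, image of $\bw_+$ equal to $\sqrt{\lambda}\,\bv$) checks out, and your remark about why intra-block rotations cannot suffice is a nice conceptual point the paper does not make. What the paper's version buys in exchange is uniformity: its formula for $u_{1,t}$ works verbatim for any eigenvalue $\lambda_1$, including $\lambda_1=0$ (and negative), and regardless of whether $p_+$ or $p_-$ equals $1$, whereas your boost degenerates as $b\to a$ (i.e.\ $\lambda\to 0$) and your concentration step needs small sign adjustments when $p_\pm=1$; you dismiss these as trivial/symmetric, which is defensible but does require the brief extra argument you omit. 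Both proofs use only $p\geq 1$, consistent with the paper consuming $p>2n$ entirely in Lemma~\ref{mappinglemma}.
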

\begin{proof}
To simplify the notation we assume (w.l.o.g.) that $\bw_i^*=\bw_1^*$ and that 
\begin{equation*}
\bW = [\bw_1^*,\bw_2,\cdots,\bw_p]^T    
\end{equation*}
Now we want to construct a path
\begin{align*}
\bu_t & = (u_{1,t},u_2,\dots,u_p) \\
\bW_t & = [\bw_1^*,\bw_{2,t},\cdots,\bw_{p,t}]^T
\end{align*}
such that $\bw_{2,1},\dots,\bw_{p,1} \in \prodscal{\bra{\bw_1^*}}^\perp$. To do this we simply take
\begin{equation*}
\bw_{k,t} \doteq \bw_k - t\prodscal{\bw_1^*,\bw_k}\bw_1^*
\end{equation*}
If $\bA_t = \sum_{k=1}^p u_{k,t}\bw_{k,t}\bw_{k,t}^T$, we can show that there exists a choice of $u_{1,t}$ such that $\bA_t = \bA$ for all $t\in[0,1]$. It holds that
\begin{align*}
\bA_t & = u_{1,t}\,\bw_1^*(\bw_1^*)^T \\ & + \sum_{k=2}^pu_k\parq*{ (1-t)^2(w_k^1)^2\bw_1^*(\bw_1^*)^T +  (1-t)w_k^1\parr*{\tilde{\bw}_k(\bw_1^*)^T + \bw_1^*\tilde{\bw}_k^T} + \tilde{\bw}_k\tilde{\bw}_k^T}
\end{align*}
where $w_k^1 \doteq \prodscal{\bw_k,\bw_1^*}$ and $\tilde{\bw}_k = \bw_k - w_k^1\bw_1^*$. In particular
\begin{equation*}
\bA_t = \bV^*\left[
\begin{array}{c|c}
a_t & \bb_t^T \\
\hline
\bb_t & \bA_{2:n,2:n}
\end{array}
\right]
(\bV^*)^T
\end{equation*}
where $\bV^* = [\bw_1^*,\cdots,\bw_n^*] \in O(n)$. Since $\sum_{k=2}^p u_kw_k^1\,\tilde{\bw}_k = 0$, it follows
\begin{equation*}
\bb_t = (1-t)\sum_{k=2}^p u_kw_k^1\,\tilde{\bw}_k = 0 \quad \text{for all $t\in[0,1]$}
\end{equation*}
If we take 
\begin{equation*}
u_{1,t} = \lambda_1 -  (1-t)^2\sum_{k=2}^p u_k (w_k^1)^2
\end{equation*}
it holds that 
\begin{equation*}
a_t = u_{1,t} + (1-t)^2\sum_{k=2}^p u_k (w_k^1)^2 = \lambda_1 \quad\text{for all $t\in[0,1]$}
\end{equation*}
Therefore, $\bA_t = \bA$ constant. This concludes the proof of the lemma.
\end{proof}
\begin{remark}\label{remark:null-elements-quadratic}
In the proof of Lemma \ref{movinglemma}, we assumed that (after rescaling) $\bu\in\bra{\pm 1}^p$. In general, it could be that $u_k = 0$ for some $k$. In this case we can first map the corresponding vectors $\bw_k$ to $\bzero$ and the map such $u_k$ to $1$, without affecting the loss.
\end{remark}

\section{Proofs of Section \ref{sec:spurious valleys exist}}
\begin{proof}[Proof of Theorem \ref{theo:main_counter_ex}]
We consider here the case $m =1$, but the same proof can be extended to the case $m > 1$. We start by properly choosing a r.v. $(\bX,\bY)$. Be $\bar{\bX}\in\mathcal{R}_2(\sigma,n-1)$ a $(n-1)$ dimensional r.v. and $\bar{X}_n \in\mathcal{R}_2(\sigma,1)$ a one dimensional r.v. We consider $\tilde{\bX} = Z\bar{\bX}$, $X_n = (1-Z)\bar{X}_n$ and  $\bX = (\tilde{\bX},X_n)$, where $Z \sim \mathrm{Ber}(1/2)$ and $\bar{\bX},\bar{X}_n,Z$ are independent. By hypothesis, $p \leq 2^{-1}\mathrm{dim}_*(\sigma,\tilde{\bX})$. The proof is based on the fact that (for a proper choice of $\tilde{\bX}$) this implies that 
$\overline{V_{\sigma,p-1}^+}\neq V_{\sigma,p}^+$,
where we defined
\begin{equation*}
V_{\sigma,p}^+ = \bra{\Phi(\cdot;\btheta)\st \btheta \in [0,\infty)^{p}\times \mathbb{R}^{p\times n}} \subseteq L^2_{\bX}   
\end{equation*}
(see the remark at the end of the proof). The r.v. $Y$ is taken to be $Y = g_1(\bX) - g_2(\bX)$, where $g_2 = \beta\psi_{\sigma,\bv} \in V_{\sigma,1}^+$, $\beta > 0$, $\bv = \be_n$, and $g_1= \sum_{i=1}^p \alpha_i \psi_{\sigma,\bv_i} \in V_{\sigma,p}^+$,  $\balpha \in (0,\infty)^p$, $\bv_i \in \prodscal{\bra{\be_n}}^\perp$, $i \in [p]$, is such that 
\begin{equation*}
\inf_{f \in V_{\sigma,p-1}^+ }\expval\abs*{f(\bX) - g_1(\bX)}^2 = \epsilon > 0
\end{equation*}
We define 
\begin{align*}
V_{\sigma,(p-1,1)} = \bra*{f = f_1 - f_2 \st f_1 \in V_{\sigma,p-1}^+, f_2 \in V_{\sigma,1}^+}
\end{align*}
Notice that, for every path $\btheta:t \in [0,1]\mapsto \btheta_t \in \Theta$ such that $\Phi(\cdot;\btheta_0) \in V_{\sigma,p}^+$ and $\Phi(\cdot;\btheta_1) \in V_{\sigma,(p-1,1)}$, there exists $t_0 \in (0,1)$ such that $\Phi(\cdot;\btheta_{t_0}) \in V_{\sigma,p-1}^+$. Consider the \emph{lifted} square loss function $L : V_{\sigma,p} \to [0,\infty)$  defined as
\begin{equation*}
L(f) = \expval\abs*{f(\bX) - g(\bX)}^2   \qquad \text{for } f \in V_{\sigma,p}
\end{equation*}
We want to show that
\begin{align*}
L_{(p-1,0)} & \doteq \min_{f\in V_{\sigma,p-1}^+ }L(f) > L_{(p,0)} \doteq \min_{f\in V_{\sigma,p}^+}L(f) > L_{(p-1,1)} \doteq \min_{f\in V_{\sigma,(p-1,1)}}L(f)
\end{align*}
It holds that 
\begin{align*}
L_{(p-1,0)} & = \min_{f\in V_{\sigma,p-1}^+ } \bra*{ \expval\abs*{f(\bX) - g_1(\bX)}^2 } + 2 \min_{f\in V_{\sigma,p-1}^+ } \bra*{ \expval\parq*{ f(\bX) g_2(\bX) }} \\ & + \expval\abs*{g_2(\bX)}^2 - C\sigma(0) \\
& \geq \epsilon + L_{(p,0)} - C\sigma(0)
\end{align*}
where $C = \expval\parq{ g_1(\tilde{\bX}) } +  \expval\parq{ g_2(X_n) } $ ,and that
\begin{align*}
L_{(p,0)} & = \min_{f\in V_{\sigma,p}^+ } \bra*{ \expval\abs*{f(\bX) - g_1(\bX)}^2 } + 2 \min_{f\in V_{\sigma,p}^+ } \bra*{ \expval\parq*{ f(\bX) g_2(\bX) }} \\ & + \expval\abs*{g_2(\bX)}^2   - C\sigma(0) \\
& \geq \beta^2 \expval\abs*{\psi_{\sigma,\bv}(X_n)}^2 - C\sigma(0)
\end{align*}
Finally, it holds that
\begin{equation*}
L_{(p-1,1)} \leq \min_{i\in[1,p]} \alpha_i^2 \expval\abs{\psi_{\sigma,\bv_i}(\bX)}^2   
\end{equation*}
Given $M > 0$, up to multiply $g_1$ by a positive constant, it holds that
\begin{align*}
\epsilon & \geq M + C\sigma(0) \\    
\beta^2 & \geq \frac{M + C\sigma(0) + \min_{i\in[1,p]} \alpha_i^2 \expval\abs{\psi_{\sigma,\bv_i}(\bX)}^2}{\expval\abs{\psi_{\sigma,\bv}(X_n)}^2}
\end{align*}
To finish the proof, consider $\mathcal{U} = \bra{ \btheta = (\bu,\bW) \in \Theta \st \bu \in (0,\infty)^p }$ and $\btheta^* \in \mathcal{U}$ such that
$$
L(\btheta^*) = \min_{\btheta \in \mathcal{U}} L(\btheta)
$$
Then, (by continuity of $L$) there exists a neighborhood $\btheta^* \in \Omega \subset \mathcal{U}$ such that $\sup_{\btheta\in\Omega} L(\btheta) \leq L(\btheta^*) + M/2$. The set $\Omega$ then verifies the statement of the theorem.

\end{proof}

\begin{remark}
In the proof of Theorem \ref{theo:main_counter_ex} we used the fact that, if $p\geq 1$ verifies $p\leq \frac{1}{2}\dim_*(\sigma,n)$, then there exist $\bX \in \mathcal{R}_2(\sigma,n)$ such that 
$\overline{V_{\sigma,p-1}^+} \neq V_{\sigma,p}^+$
(in the $L^2_\bX$ metric). Assume $\bX$ is a $n$-dimensional standard Gaussian variable. 
Consider first the case $\sigma(z) = z^k$.
If $k=2$, then
\begin{equation*}
V_{\sigma,p}^+ \simeq \bra{ \bM \in \mathrm{S}^2(\mathbb{R}^n) \st \bM \text{ is PSD}}    
\end{equation*}
In particular, this implies that $V_{\sigma,p-1}^+$ is not dense in $V_{\sigma,p}^+$ if $p\leq n = \dim_*(\sigma,n)$ (which justifies the statement of Corollary \ref{cor: spurious polynomial}). If $k>2$, let $p\leq \frac{1}{2}\dim_*(\sigma,n)$ and assume that 
$\overline{V_{\sigma,p-1}^+} = V_{\sigma,p}^+$.
This implies that every tensor $\bT = 
\sum_{i=1}^p \bv_i^{\otimes k}$ can be 
approximated up to any accuracy by 
$\bT = \sum_{i=1}^{p-1} \tilde{\bv}_i^{\otimes k}$ for some $\tilde{\bv}_1,\dots,\tilde{\bv}_{p-1}\in\mathbb{R}^n$. But this also implies that every tensor $\bT \in \mathbb{S}^{k}(\mathbb{R}^n)$ has 
border %
rank 
less or equal than $2\parr*{\frac{1}{2}\dim_*(\sigma,n) -1} = \rk_\mathrm{S}(k,n) - 2$, 
which contradicts the definition of $\rk_\mathrm{S}(k,n)$.
For non-polynomial $\sigma$, we can get the same result, by using the decomposition \eqref{eq:norm of phi} and proceeding as above. 
\end{remark}

\section{Proof of Theorem \ref{theo:energy_gap}}

\begin{proof}
If we denote by $d\mu$ the probability distribution of $\bX$, the continuous function 
\begin{equation*}
\psi : (\bw, \bx) \in \mathbb{S}^n \times \mathbb{R}^n \mapsto \psi_\bw(\bx) = \sigma(\prodscal{\bw,\bx})     
\end{equation*}
belongs to $L^2(\mathbb{S}^n\times \mathbb{R}^n, d\tau \otimes d\mu)$. We consider the kernel associated with the neural network architecture
\begin{equation}\label{eq:kernel}
k(\bx,\by) = \int_\mathcal{W} \psi_\bw(\bx) \psi_\bw(\by)\,d\tau(\bw)
\end{equation}
The above defines a continuous symmetric, positive semi-definite kernel $k$, along with $\mathbb{H}$, the RKHS associated, and the integral operator $\Sigma : L^2(\mathbb{R}^n, d\mu) \to \mathbb{H} \subseteq L^2(\mathbb{R}^n, d\mu)$ defined as
\begin{equation*}
f \mapsto \parr*{\Sigma f : \bx \mapsto \int_{\mathbb{R}^n} f(\by) k(\bx,\by)\,d\mu(\by)}
\end{equation*} 
The operator $\Sigma$ admits a spectral decomposition in $L^2(\mathbb{R}^n, d\mu)$: $\Sigma e_k = \lambda_k e_k$ for an orthonormal basis $\bra*{e_k}_{k\geq 1}$ of $L^2(\mathbb{R}^n, d\mu)$ and non-increasing sequence of 
non-negative eigenvalues $\bra{\lambda_k}_{k\geq 1}$. Moreover the RKHS $\mathbb{H}$ is dense in $L^2(\mathbb{R}^n,d\mu)$ (see Lemma \ref{lemma:rkhs_dense}), 
which is equivalent to have $\lambda_k > 0$ for all $k\geq 1$. The expectation in \eqref{eq:kernel} provides a singular value decomposition for $\Sigma$ 
in terms of functions in $L^2(\mathbb{S}^n, d\tau)$. Indeed, given $g \in L^2(\mathbb{S}^n, d\tau)$, the linear operator 
$T: L^2(\mathbb{S}^n, d\tau) \to L^2(\mathbb{R}^n, d\mu)$ defined as
\begin{equation*}
g \mapsto \parr*{T g : \bx \mapsto \int_{\mathbb{S}^n} g(\bw) \psi_\bw(\bx)\,d\tau(\bw)}
\end{equation*}
satisfies $\Sigma = T T^*$. It follows that there exists an orthonormal basis 
of $L^2(\mathbb{S}^n,d\tau)$, $\bra{f_k}_{k\geq 1}$ such that $T f_k = \lambda_k^{1/2} e_k$ and therefore
$\psi_\bw = \sum_{k=1}^\infty \lambda_k^{1/2} f_k(\bw) e_k$.
Finally, it can be shown \citep{bach2017breaking} that in fact $\mathbb{H} = \mathrm{Im}(T)$, and 
thus $\mathbb{H}$ consists of functions $f$ that can be written, for some $g\in L^2(\mathbb{S}^n, d\tau)$ as
\begin{equation*}
f(\bx) = \int_{\mathbb{S}^n} g(\bw) \psi_\bw(\bx) d\tau(\bw) = \prodscal{g, \psi(\cdot,\bx)}_{L^2(\mathbb{S}^n, d\tau)} \quad \text{for } \bx \in \mathbb{R}^n
\end{equation*}
For an account of these properties, we refer to Bach \citep{bach2017equivalence}. Thanks to the density of $\mathbb{H}$ in $L^2(\mathbb{R}^n,d\mu)$, we can assume, without loss of generality, that
\begin{equation*}
f^*(\bx) = \int_{\mathbb{S}^n} g^*(\bw) \psi_\bw(\bx) d\tau(\bw)
\end{equation*}
for some $g^* \in L^2(\mathbb{S}^n,d\tau)$.
Now, given an initial set of first layer weights $\bw_1, \dots, \bw_p \in \mathbb{S}^n$ 
sampled i.i.d. from $d\tau$, and $\bW = [\bw_1,\dots,\bw_p]^T$, we define the empirical kernel 
\begin{equation*}
k_\bW(\bx,\by) = \frac{1}{p} \sum_{i=1}^p \sigma(\prodscal{\bx, \bw_i}) \sigma(\prodscal{\by, \bw_i})
\end{equation*}
which in turn defines an empirical RKHS $\mathbb{H}_\bW$. Keeping the first layer weights fixed and optimizing the output layer weights thus gives us 
the ability to find a function $f^*_\bW \in \mathbb{H}_\bW$ that best approximates $f^*$:
\begin{equation*}
\norm*{ f^*_\bW - f^* }_{L^2(\mathbb{R}^n,d\mu)}  = \min_{f \in \mathbb{H}_\bW}\norm*{f - f^*}_{L^2(\mathbb{R}^n,d\mu)} \doteq R(\bW)
\end{equation*}
Given an initial parameter parameter value $\tilde{\btheta} = (\tilde{\bu},\tilde{\bW})$ (here we incorporated $\tilde{\bb}$ in $\tilde{\bW}$) as in the statement, consider the path
\begin{equation*}
\btheta_t = (t\bq(\tilde{\bW}) + (1-t)\tilde{\bu},\tilde{\bW}) \quad\text{where}\quad \bq(\tilde{\bW}) = \argmin_{\bu\in\mathbb{R}^p} L(\btheta)|_{\btheta = (\bu,\tilde{\bW})}
\end{equation*}
By convexity of $L$, the function $t\in[0,1]\mapsto L(\btheta_t)$ is non-increasing and it holds that
\begin{align*}
L(\btheta_1) & \leq \mathcal{R}\parr{\bX,Y} + R(\tilde{\bW}) 
\end{align*}
Applying Proposition 1 from Bach \citep{bach2017equivalence}, it holds that
\begin{equation*}
R(\bW) \leq 4\lambda \quad\text{if}\quad p \geq 5d(\lambda)\log(16d(\lambda) / \delta)   
\end{equation*}
with probability greater or equal than $1-\delta$, where 
\begin{align*}
d(\lambda) &= \max_{\bw \in \mathbb{S}^n}\expval\parq*{\varphi_\bw(\bX)((\Sigma + \lambda I)^{-1}\psi_\bw)(\bX) } \\
&= \max_{\bw \in \mathbb{S}^n} \sum_{k=1}^\infty \frac{\lambda_k}{\lambda_k + \lambda} f_k(\bw)^2 \leq \lambda^{-1}\max_{\bw \in \mathbb{S}^n} \sum_{k=1}^\infty \lambda_k f_k(\bw)^2 = \lambda^{-1}\max_{\bw \in \mathbb{S}^n}\norm{\psi_\bw}^2_{L^2(\mathbb{R}^n,d\mu)}
\end{align*}
This shows part 1 of the statement. To prove part 2, notice that $f^*_\bW = \Phi(\cdot;\btheta)$ with $\btheta = (\bu^*_\bW, \bW)$ for some $\bu^*_\bW \in \mathbb{R}^p$.
By taking $u_k = \frac{1}{p} g^*(\bw_k)$ for $k \in [p]$ and denoting $Z(\bw):= g^*(\bw) \psi_\bw$ and by $Z$ the r.v. $Z = Z(\bv)$, for $\bv \sim d\tau$, with values in $L^2(\mathbb{R}^n,d\mu)$, it holds
\begin{equation}\label{eq:whp_error_bound}
R(\bW) \leq \norm[\Big]{\frac{1}{p}\sum_{k=1}^p Z(\bw_k)- \E_\tau\parq*{Z} }_{L^2(\mathbb{R}^n, d\mu)}
\end{equation}
Note that $C \doteq \sup_{\bw \in \mathbb{S}^n} \norm{Z(\bw)}_{L^2(\mathbb{R}^n,d\mu)} \leq \norm{g^*}_{L^\infty(\mathbb{S}^n,d\tau)} \max_{\bw \in \mathbb{S}^n} \norm{\psi_\bw}_{L^2(\mathbb{R}^n,d\mu)}< \infty$ if $\norm{g^*}_{L^\infty(\mathbb{S}^n,d\tau)} < \infty$. Then, applying Lemma \ref{lemma:hoeffding_hilbert} to the bound \eqref{eq:whp_error_bound}, we get that
\begin{equation*}
\mathbb{P}_\tau \bra*{ R(\bW) \leq \varepsilon } \geq 1 - \mathrm{exp}\bra*{-\parr*{\varepsilon - \sqrt{v(p)}}^2 / (2 v(p))}
\end{equation*}
for every $\varepsilon \geq \sqrt{v(p)}$, with $v(p) = C^2 /p$. The result follows by taking $\varepsilon = v(p)^{1/2} + p^{\delta/2-1/2}$.
\end{proof}
\section{Proofs of Section \ref{sec:dimension}}\label{app:proofs:id}

\begin{proof}[Proof of Lemma \ref{lemma:uid_when_infty}]
If $\sigma$ is a polynomial of any degree $d$, then it holds that $\dim^*(\sigma,n) < \infty$. 
Indeed, let $\sigma(z) = a_0 + a_1z + \cdots + a_dz^d$, for some $a_k \in \mathbb{R}$. If $I = \bra{k \in [0,d] \st a_k \neq 0}$, then
\begin{equation*}
V_\sigma \subseteq \mathbb{R}_I[\bx] \doteq \bra{\bx\mapsto \sum_{k \in I} \sum_{\abs{\bbeta} = k} \alpha_\bbeta \bx^\bbeta \st \alpha_\bbeta \in\mathbb{R}}  
\end{equation*}
It follows that
\begin{equation*}
\dim^*(\sigma,n) = \dim(V_\sigma) \leq \dim\parr*{\mathbb{R}_I[\bx]} = \sum_{k=0}^d\binom{n + k -1}{k}\mathbf{1}_{\bra{a_k\neq 0}} = O(n^d)
\end{equation*}
This proves one implication. We prove the other one by contradiction. Assume now that $\sigma$ is not a polynomial and that $\dim(V_\sigma) = q < \infty$. Thanks to Theorem 1 in Leshno et al. \citep{leshno1993multilayer}, for every continuous function $g:\mathbb{R}^n\to\mathbb{R}$, any compact set $K\subset \mathbb{R}^n$, and any $\varepsilon >0$ there exist $h \in V_\sigma$ such that 
\begin{equation}\label{eq:continuous_universal_approx}
\sup_{\bx\in K}\,\abs{h(\bx) - g(\bx)} < \varepsilon    
\end{equation}
Now, let $g:\mathbb{R}^n\to\mathbb{R}$ be a continuous function supported on a compact set $C\subset \mathbb{R}^n$. We call $C_c(\mathbb{R}^n)$ the set of the real-valued continuous functions from $\mathbb{R}^n$ with compact support.
Thanks to \eqref{eq:continuous_universal_approx}, we can find a sequence of compact sets $\bra{K_m}_{m\geq 1}$ of $\mathbb{R}^n$ such that
\begin{equation*}
C \subseteq K_1 \subseteq K_2 \subseteq \cdots \subseteq K_m \subseteq \cdots \subseteq \cup_{m=1}^\infty K_m = \mathbb{R}^n
\end{equation*}
and a sequence of functions $\bra{h_m}_{m\geq 1} \subset V_\sigma$ such that
\begin{equation*}
\norm{g - h_m\mathbbm{1}_{K_m}}_{L^2_\bX} = \norm{(g - h_m)\mathbbm{1}_{K_m}}_{L^2_\bX} < 2^{-m} 
\end{equation*}
In particular this implies that 
\begin{equation*}
\norm{h_n\mathbbm{1}_{K_n} - h_m\mathbbm{1}_{K_m}}_{L^2_\bX} < 2^{1-\min\bra{n,m}}\to 0
\end{equation*}
as $n,m \to \infty$, i.e. $\bra{h_m\mathbbm{1}_{K_m}}_{m\geq 1}$ is a Cauchy sequence in $L^2_\bX$ and therefore it admits a limit $\lim_{n\to\infty} h_m\mathbbm{1}_{K_m} = g\in L^2_\bX$. Since $\dim(V_\sigma) = q < \infty$, there exists $\bw_1,\dots,\bw_q \in \mathbb{R}^n$ such that every $h\in V_\sigma$ can be written as 
\begin{equation*}
h(\bx) = \prodscal{\bu, \bgamma(\bx)}
\end{equation*}
for some $\bu \in \mathbb{R}^q$, where $\bgamma(\bx) = (\sigma(\prodscal{\bw_1,\bx}),\dots,\sigma(\prodscal{\bw_q,\bx}))$. Let $\bra{\bu_m}_{m\geq 1}\subset\mathbb{R}^q$ such that $h_m(\bx) = \prodscal{\bu_m, \bgamma(\bx)}$. Thanks to the above calculations, we know that the sequence $\bra{\norm{ h_m\mathbbm{1}_K }_{L^2_\bX}}_{m\geq 1}$ is bounded for any arbitrary compact set $K\subseteq \mathbb{R}^n$. Since
\begin{equation*}
\norm{ h_m\mathbbm{1}_K }_{L^2_\bX}^2 = \bu_m^T M \bu_m
\end{equation*}
where $M = \expval\parq*{\bgamma(\bX)\bgamma(\bX)^T\mathbbm{1}_{\bra{\bX \in K}}} \in \mathbb{R}^{q\times q}$, this implies that the sequence $\bra{\bu_m}_{m\geq 1}$ is bounded (unless $g=0$). Therefore (up to extracting a sub-sequence) we can assume that it has a limit $\bu\in\mathbb{R}^q$. If we call $h\in V_\sigma$ the function defined as $h(\bx) = \prodscal{\bu, \bgamma(\bx)}$, it is easy to check (from the above calculations) that $h = g$ in $L^2_\bX$. This shows that $C_c(\mathbb{R}^n) \subseteq V_\sigma$, which in turn implies that $V_\sigma$ is dense in $L^2_\bX$ (since $C_c(\mathbb{R}^n)$ is dense in $L^2_\bX$). But this is impossible, since $\dim(V_\sigma) = q < \infty = \dim(L^2_\bX)$. Therefore, it must hold $\dim(V_\sigma) = \infty$.
\end{proof}

\begin{proof}[Proof of Lemma \ref{lemma:lid}]
Let $\btheta = (\bu,\bW) \in [0,\infty)^p\times \mathbb{R}^{p\times n}$. For every $\bx \in \mathbb{R}^n$ it holds 
\begin{equation*}
\Phi(\bx; \btheta) = \sum_{i=1}^p u_i (\prodscal{\bw_i,\bx })^k = \sum_{i=1}^p u_i \prodscal{\bw_i^{\otimes k},\bx^{\otimes k}}_F = \prodscal[\Big]{\sum_{i=1}^p u_i\bw_i^{\otimes k},\bx^{\otimes k}}_F
\end{equation*}
For any $p \geq 1$ and $(\bu,\bW) \in [0,\infty)^p\times \mathbb{R}^{p\times n}$, $\sum_{i=1}^p u_i\bw_i^{\otimes k} \in S^k(\mathbb{R}^n)$. By definition of $\rk_\mathrm{S}(k,n)$, it follows that there exists $q \leq \rk_\mathrm{S}(k,n)$ and $\tilde{\btheta} = (\tilde{\bu},\tilde{\bW}) \in [0,\infty)^q\times \mathbb{R}^{q\times n}$ such that 
\begin{equation*}
\sum_{i=1}^p u_i\bw_i^{\otimes k} = \sum_{i=1}^q \tilde{u}_i\tilde{\bw}_i^{\otimes k} \quad\Rightarrow\quad \Phi(\cdot; \btheta) = \Phi(\cdot; \tilde{\btheta})
\end{equation*}
By definition of $\dim_*(\sigma,n)$, this implies that $\mathrm{dim}_*(\sigma,n) \leq \mathrm{rk}_\mathrm{S}(k,n)$. The equality follows by choosing $(\bu,\bW) \in [0,\infty)^p\times \mathbb{R}^{p\times p}$ such that $\mathrm{rk}_\mathrm{S}\parr{\sum_{i=1}^p u_i\bw_i^{\otimes k}} = \mathrm{rk}_\mathrm{S}(k,n).$
\end{proof}


\begin{proof}[Proof of Lemma \ref{lemma:infinite lower dimension}]
If $\sigma$ is polynomial, then one implication follows by Lemma \ref{lemma:uid_when_infty}. Now, assume that $\sigma \in L^2(\mathbb{R}, e^{-x^2/2}\,dx)$ is a continuous non-polynomial activation and let $\bX \sim N(\bzero,\bI)$ be a r.v. in $\mathcal{R}_2(\sigma,n)$.
Then, we can write $\sigma(z) = \sum_{k=0}^\infty \hat{\sigma}_k h_k(z)$, where $h_k$ is the $k$-th Hermite polynomial. It follows that, for $\btheta = (\bu,\bW)$,
\begin{equation}\label{eq:norm of phi}
\expval\abs{\Phi(\bX;\btheta)}^2 = \sum_{k=1}^\infty \hat{\sigma}_k^2 \norm*{\sum_{i=1}^p u_i \bw_i^{\otimes k}}_F^2 
\end{equation}
(see Lemma 1 from \citep{mondelli2018connection}). Since $\sigma$ is not polynomial and $n > 1$, $V_{\sigma,p} \neq V_{\sigma,p+1}$, where $$V_{\sigma,p} \doteq \bra*{ \bx \mapsto \sum_{k=1}^p u_k\sigma(\prodscal{\bw_k, \bx}) \st (\bu,\bW) \in \R^{p}\times\R^{p \times n}}$$ 
Indeed, if $V_{\sigma,p} = V_{\sigma,p+1}$, then $V_{\sigma,p} = V_{\sigma,q}$ for every $q > p$. Let $k$ be a positive integer such that $\hat{\sigma}_k \neq 0$ and such that $\rk_\mathrm{S}(k,n) > p$. 
Let $\bG = \sum_{i=1}^{q} \alpha_i\bv_i^{\otimes k}$ a symmetric tensor with $\rk_\mathrm{S}(\bG) = q = \rk_\mathrm{S}(k,n)$ and $g = \sum_{i=1}^q \alpha_i\psi_{\sigma,\bv_i}$. If $g \in V_{\sigma,p}$, then there exists $\btheta = (\bu,\bW) \in \Theta = \mathbb{R}^p \times \mathbb{R}^{p\times n}$ such that
\begin{equation*}
0 = \expval\abs{\Phi(\bX;\btheta) - g(\bX)}^2 = \sum_{k=1}^\infty \hat{\sigma}_k^2 \norm[\bigg]{\sum_{i=1}^p u_i \bw_i^{\otimes k} - \sum_{i=1}^{q} \alpha_i\bv_i^{\otimes k}}_F^2
\end{equation*}
But this would imply that $\rk_\mathrm{S}(\bG) \leq p$, which is a contradiction. This concludes the proof.
\end{proof}

\section{Proofs of Additional Lemmas}
\label{app:lemmas}

\begingroup
\def\thetheorem{\ref{lemma:what_are_spurious_valleys}}
\begin{lemma}
Be $\btheta \mapsto L(\btheta)$ a continuous function. Then, property \ref{p:conn} implies absence of spurious valleys. In particular, this implies absence of strict spurious minima, and of (generally non-strict) spurious minima if property \ref{p:conn} holds with strictly decreasing paths $t\mapsto L(\btheta_t)$. Conversely, presence of spurious valleys implies existence of spurious minima.
\end{lemma}
\addtocounter{theorem}{-1}
\endgroup

\begin{proof}
Assume that property \ref{p:conn} holds. Consider any value $c>0$ such that $\Omega_L(c)$ is non-empty and let $\mathcal{U}$ be a connected component of $\Omega_L(c)$. Given a point $\btheta\in\mathcal{U}$ there exists a path from $\btheta$ satisfying property \ref{p:conn}. This means that $\mathcal{U}$ contains a global minima, and therefore it can not be a spurious valley.
Similarly, assume that property \ref{p:conn} holds with strictly decreasing paths and that the function $L$ admits a strict local minima. This means that there exists a point $\btheta_0$ such that $\min_\btheta L(\btheta) < L(\btheta_0) < L(\btheta)$ for all $\btheta$ in $B_\epsilon(\btheta)$, for some $\epsilon > 0$. But this implies that for any path $t\in[0,1]\mapsto \btheta_t$ if holds $L(\btheta_t) > L(\btheta_0)$ for some $t>0$ sufficiently small, a contradiction. To see the last point, assume that there exist spurious valleys and consider $\mathcal{U}$ a connected component of $\Omega_L(c)$ for some $c>0$. Then $\btheta^* \in \argmin_\btheta L(\btheta)$ is a spurious minima. 
\end{proof}

\begin{lemma}\label{lemma:best_solution_regression}
Consider the optimization problem
\begin{equation}\label{eq:regression_problem}
\argmin_{\bW\in\mathbb{R}^{m\times n}} \ell(\bW) \quad \text{where}\quad \ell(\bW) = \expval\norm{\bW\bX-\bY}^2 
\end{equation}
for two square integrable r.v.'s $\bX$ and $\bY$ with values in $\mathbb{R}^n$ and $\mathbb{R}^m$ respectively. Then one solution to (\ref{eq:regression_problem}) is given by
\begin{equation}\label{eq:regression_solution}
\bW = \bSigma_{\bY\bX} \bSigma_\bX^\dagger
\end{equation}
Similarly, one solution to the optimization problem 
\begin{equation}\notag
\argmin_{\bU\in\mathbb{R}^{m\times p}} \ell(\bU;\bW) \quad \text{where}\quad \ell(\bU;\bW) = \expval\norm{\bU\bW\bX-\bY}^2
\end{equation}
for any $\bW\in\mathbb{R}^{p\times n}$ is given by
\begin{equation}\label{eq:q_function}
\bU = \bq(\bW) \doteq \bSigma_{\bY\bX} \bW^T(\bW\bSigma_\bX\bW^T)^\dagger
\end{equation}
Assuming $\bSigma_\bX$ invertible, the minimal value obtained by $\ell(\bU;\bW)$ is given by 
\begin{equation}\label{minima_regression_value_second_layer}
\ell(\bq(\bW);\bW) = \tr(\bSigma_\bY) - \tr((\bW\bK)^\dagger (\bW\bK) \bM)
\end{equation}
where $\bK = (\bSigma_\bX)^{1/2}$ and $\bM = \bK^{-1}\bSigma_{\bX\bY}\bSigma_{\bY\bX}\bK^{-1}$. If $\bM = \sum_{i=1}^n \lambda_i \bv_i \bv_i^T$ is the SVD of $\bM$, the quantity \eqref{minima_regression_value_second_layer} is minimized over $\bW$ for $(\bW\bK)^\dagger (\bW\bK) = \sum_{i=1}^{p\wedge n} \bv_i\bv_i^T$.
\end{lemma}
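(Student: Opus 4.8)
The plan is to view the whole statement as a chain of convex least-squares problems, proving the first formula directly and then reducing everything else to it via standard pseudo-inverse and trace identities. First I would expand $\ell(\bW) = \tr(\bW\bSigma_\bX\bW^T) - 2\tr(\bW\bSigma_{\bX\bY}) + \tr(\bSigma_\bY)$, which is convex in $\bW$ since $\bSigma_\bX \succeq 0$, so a matrix is optimal iff it solves the normal equations $\bW\bSigma_\bX = \bSigma_{\bY\bX}$. The only subtle point is that $\bSigma_\bX$ may be singular: here I would note that $\bv^T\bSigma_\bX\bv = 0$ forces $\bv^T\bX = 0$ a.s., hence $\bv^T\bSigma_{\bX\bY} = \E[(\bv^T\bX)\bY^T] = 0$, so $\ker(\bSigma_\bX)\perp\mathrm{range}(\bSigma_{\bX\bY})$, i.e. $\mathrm{range}(\bSigma_{\bX\bY})\subseteq\mathrm{range}(\bSigma_\bX)$; since $\bSigma_\bX^\dagger\bSigma_\bX$ is the orthogonal projection onto $\mathrm{range}(\bSigma_\bX)$, the candidate $\bW = \bSigma_{\bY\bX}\bSigma_\bX^\dagger$ satisfies $\bW\bSigma_\bX = \bSigma_{\bY\bX}\bSigma_\bX^\dagger\bSigma_\bX = \bSigma_{\bY\bX}$, proving \eqref{eq:regression_solution}.

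For the $\bU$-formula I would apply the previous paragraph verbatim with the random vector $\bX$ replaced by $\bW\bX$: then $\bSigma_{\bW\bX} = \bW\bSigma_\bX\bW^T$ and $\bSigma_{\bY,\bW\bX} = \bSigma_{\bY\bX}\bW^T$, so the optimal second layer is $\bSigma_{\bY\bX}\bW^T(\bW\bSigma_\bX\bW^T)^\dagger = \bq(\bW)$, which is \eqref{eq:q_function}. Plugging $\bW^* := \bSigma_{\bY\bX}\bSigma_\bX^\dagger$ into $\ell$ and using the normal equations gives the one-layer optimum $\tr(\bSigma_\bY) - \tr(\bSigma_{\bY\bX}\bSigma_\bX^\dagger\bSigma_{\bX\bY})$; the same substitution $\bX\mapsto\bW\bX$ then yields $\ell(\bq(\bW);\bW) = \tr(\bSigma_\bY) - \tr(\bSigma_{\bY\bX}\bW^T(\bW\bSigma_\bX\bW^T)^\dagger\bW\bSigma_{\bX\bY})$. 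Now assuming $\bSigma_\bX$ invertible, I set $\bA := \bW\bK$, so $\bW = \bA\bK^{-1}$ and $\bW\bSigma_\bX\bW^T = \bA\bA^T$; using the identity $\bA^T(\bA\bA^T)^\dagger\bA = \bA^\dagger\bA$ (both sides being the orthogonal projection onto the row space of $\bA$) together with cyclicity of the trace, the subtracted term becomes $\tr(\bA^\dagger\bA\,\bK^{-1}\bSigma_{\bX\bY}\bSigma_{\bY\bX}\bK^{-1}) = \tr((\bW\bK)^\dagger(\bW\bK)\bM)$, which is exactly \eqref{minima_regression_value_second_layer}.

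For the last claim, write $\bP := (\bW\bK)^\dagger(\bW\bK)$, the orthogonal projection onto the row space of $\bW\bK$, whose rank is at most $p\wedge n$ since $\bK$ is invertible; conversely, given any subspace $S$ of dimension $d\leq p\wedge n$ with orthonormal basis $\bu_1,\dots,\bu_d$, the matrix $\bW$ with rows $\bu_1^T\bK^{-1},\dots,\bu_d^T\bK^{-1}$ followed by zero rows has row space of $\bW\bK$ equal to $S$, so as $\bW$ varies $\bP$ ranges over exactly the orthogonal projections of rank $\leq p\wedge n$. Minimizing \eqref{minima_regression_value_second_layer} over $\bW$ is therefore maximizing $\tr(\bP\bM)$ over such $\bP$; since $\bM = \sum_i\lambda_i\bv_i\bv_i^T$ is positive semidefinite with $\lambda_1\geq\cdots\geq\lambda_n\geq 0$, Ky Fan's maximum principle gives the value $\sum_{i=1}^{p\wedge n}\lambda_i$, attained at $\bP = \sum_{i=1}^{p\wedge n}\bv_i\bv_i^T$, which is the asserted optimizer.

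All the ingredients are classical; the two spots that genuinely warrant a line of care are the range inclusion $\mathrm{range}(\bSigma_{\bX\bY})\subseteq\mathrm{range}(\bSigma_\bX)$ that legitimizes the pseudo-inverse formula in the degenerate case, and the surjectivity statement that $(\bW\bK)^\dagger(\bW\bK)$ exhausts all projections of rank $\leq p\wedge n$, which is what lets the $\bW$-minimization be recast as a Ky Fan problem. I expect the latter to be the (minor) crux of the argument.
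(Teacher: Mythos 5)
Your proposal is correct and follows essentially the same route as the paper's proof: stationary/normal equations for the linear regression, the substitution $\bX \mapsto \bW\bX$ for the second-layer problem, the identity $(\bW\bK)^T\bigl((\bW\bK)(\bW\bK)^T\bigr)^\dagger(\bW\bK) = (\bW\bK)^\dagger(\bW\bK)$, and maximizing the trace of the projection against $\bM$ via its top eigenvectors. If anything, you spell out two points the paper leaves implicit (the range inclusion justifying the pseudo-inverse when $\bSigma_\bX$ is singular, and the fact that $(\bW\bK)^\dagger(\bW\bK)$ exhausts all projections of rank at most $p\wedge n$), so no changes are needed.
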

\begin{proof}
The first part of the lemma can be shown by writing problem (\ref{eq:regression_problem}) as 
\begin{equation}\label{eq:regression_function}
\argmin_{\bW\in\mathbb{R}^{m\times n}} \ell(\bW) \quad \text{where}\quad \ell(\bW) = \tr(\bW\bSigma_\bX\bW^T) - 2\tr (\bSigma_{\bY\bX}\bW^T) 
\end{equation}
and by taking $\bW$ as a stationary point of the above $\ell(\bW)$. Using this fact, one minima of the function $\ell(\bU;\bW)$ is given by 
\begin{equation*}
\bU = \bSigma_{\bY\bX\bW} (\bSigma_\bW\bX)^\dagger = \bSigma_{\bY\bX} \bW^T(\bW\bSigma_\bX\bW^T)^\dagger
\end{equation*}
Now assume that $\bSigma_\bX$ is invertible; let $\bK = (\bSigma_\bX)^{1/2}$ and $\bM = \bK^{-1}\bSigma_{\bX\bY}\bSigma_{\bY\bX}\bK^{-1}$. Then it holds
\begin{align*}
\ell(\bq(\bW);\bW) & =  \tr(\bq(\bW)\bW\bSigma_\bX\bW^T\bq(\bW)^T) - 2\tr (\bSigma_{\bY\bX}\bW^T\bq(\bW)^T) + \tr(\bSigma_\bY) \\
& = \tr(\bSigma_{\bY\bX} \bW^T(\bW\bSigma_\bX\bW^T)^\dagger \bW\bSigma_\bX\bW^T(\bW\bSigma_\bX\bW^T)^\dagger \bW\bSigma_{\bX\bY}) \\ 
& - 2\tr (\bSigma_{\bY\bX}\bW^T(\bW\bSigma_\bX\bW^T)^\dagger \bW\bSigma_{\bX\bY}) + \tr(\bSigma_\bY) \\
& = -\tr (\bSigma_{\bY\bX}\bW^T(\bW\bSigma_\bX\bW^T)^\dagger \bW\bSigma_{\bX\bY}) + \tr(\bSigma_\bY) \\
& = -\tr(\bM(\bW\bK)^T((\bW\bK)(\bW\bK)^T)^\dagger (\bW\bK)) + \tr(\bSigma_\bY) \\
& = \tr(\bSigma_\bY) - \tr((\bW\bK)^\dagger (\bW\bK) \bM)
\end{align*}
Finally, we notice that the matrix $(\bW\bK)^\dagger (\bW\bK)$ is the orthogonal projection on the space spanned by the rows of $\bW\bK$, which we denote by $\bP_{\bW\bK}$. In particular $\bP_{\bW\bK}$ has the form $\bP_{\bW\bK} = \sum_{i=1}^r \bw_i\bw_i^T$ for some $\bra{\bw_1,\dots,\bw_r}\subset\mathbb{R}^n$ orthonormal vectors and $r\leq p\wedge n$. Therefore, minimize $\ell(\bq(\bW);\bW)$ over $\bW$ it is equivalent to maximize the quantity
\begin{equation*}
\sum_{i=1}^r \bw_i^T \bM \bw_i
\end{equation*}
over the sets of $\bw_1,\dots,\bw_r$ orthonormal vectors of $\mathbb{R}^n$, $r \leq p\wedge n$. Clearly, this is for $\bw_1=\bv_1,\dots,\bw_{p\wedge n}=\bv_{p\wedge n}$. 
This concludes the proof of the lemma.
\end{proof}

\begin{lemma}\label{lemma:hoeffding_hilbert}
Let $X_1,\dots,X_n$ be independent zero-mean r.v.'s taking values in a separable Hilbert space such that $\norm{X_i}\leq c_i$ with probability one and denote
$v = \sum_{i=1}^n c^2_i$. Then, for all $t \geq v$,
it holds 
\begin{equation*}
\mathbb{P}\bra*{ \norm*{ \sum_{i=1}^n X_i } > t } \leq e^{- (t - \sqrt{v})^2 / (2v)}     
\end{equation*}
\end{lemma}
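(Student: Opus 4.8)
The plan is to split the deviation of $\norm*{S}$, where $S \doteq \sum_{i=1}^n X_i$, into a \emph{mean} part and a \emph{fluctuation} part: first bound $\E\norm*{S}$ by $\sqrt{v}$, then bound the probability that $\norm*{S}$ exceeds its mean by more than $t-\sqrt{v}$ using McDiarmid's bounded-differences inequality. Concretely, for $t\geq\sqrt{v}$ one has
\begin{equation*}
\mathbb{P}\bra*{\norm*{S} > t} \leq \mathbb{P}\bra*{\norm*{S} - \E\norm*{S} > t - \sqrt{v}}
\end{equation*}
as soon as $\E\norm*{S}\leq\sqrt{v}$, and it then suffices to estimate the right-hand side.

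For the mean, independence and $\E X_i = \bzero$ kill the off-diagonal terms, so $\E\norm*{S}^2 = \sum_{i,j}\E\prodscal*{X_i,X_j} = \sum_{i=1}^n \E\norm*{X_i}^2 \leq \sum_{i=1}^n c_i^2 = v$, whence $\E\norm*{S}\leq\sqrt{v}$ by Jensen's inequality. For the fluctuation, apply McDiarmid's inequality to the real-valued function $g:(x_1,\dots,x_n)\mapsto\norm*{x_1+\cdots+x_n}$ of the independent arguments $X_1,\dots,X_n$ (separability guarantees measurability). If two tuples differ only in the $i$-th coordinate, with both coordinates of norm at most $c_i$, then the reverse triangle inequality gives $\abs*{g(\bx)-g(\bx')}\leq\norm*{x_i-x_i'}\leq 2c_i$. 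Hence, for every $s\geq 0$,
\begin{equation*}
\mathbb{P}\bra*{ g(X_1,\dots,X_n) - \E g(X_1,\dots,X_n) \geq s } \leq \exp\parr*{ - \frac{2s^2}{\sum_{i=1}^n (2c_i)^2} } = \exp\parr*{ - \frac{s^2}{2v} }
\end{equation*}
Combining the two steps with $s = t - \sqrt{v}$ yields $\mathbb{P}\bra*{\norm*{S}>t}\leq\exp\parr*{-(t-\sqrt{v})^2/(2v)}$, which is the claim.

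There is no serious obstacle here; the one point deserving care is the bounded-differences constant, which is $2c_i$ rather than $c_i$, so that the sum in McDiarmid's exponent is $4v$ — but this factor $4$ is exactly cancelled by the numerator $2s^2$, giving the clean exponent $s^2/(2v)$. One should also note that the shift argument, and hence the stated bound, is meaningful only for $t\geq\sqrt{v}$ (which is precisely the regime in which the lemma is invoked in the proof of Theorem~\ref{theo:energy_gap}, via $\varepsilon\geq\sqrt{v(p)}$).
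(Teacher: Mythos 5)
Your proof is correct and is essentially the argument behind the paper's proof, which consists only of a citation to Boucheron et al., Example 6.3 — that example proves the bound exactly as you do, via $\E\norm{\sum_i X_i}\leq\sqrt{v}$ and the bounded-differences inequality with constants $2c_i$. Your remark about the threshold is also apt: the argument (and the application in Theorem \ref{theo:energy_gap}) requires $t\geq\sqrt{v}$, so the lemma's stated condition ``$t\geq v$'' should be read as $t\geq\sqrt{v}$.
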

\begin{proof}
The proof can be found in  \citep{boucheron2013concentration}, Example 6.3.
\end{proof}

\begin{lemma}\label{lemma:rkhs_dense}
Be $\mathbb{H}\subset L^2_\bX$ the RKHS defined in the proof of Theorem \ref{theo:energy_gap}. Then $\mathbb{H}$ is dense in $L^2_\bX$.
\end{lemma}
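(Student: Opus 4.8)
The plan is to reduce the density statement to the statement that the linear span of the filter functions is dense, and then establish the latter by a Cybenko-type discriminatoriness argument.

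\textbf{Step 1: functional-analytic reduction.} With $\Sigma=TT^{*}$ and $T,T^{*}$ as in the proof of Theorem~\ref{theo:energy_gap}, note that for $f\in L^{2}_{\bX}$ one has $\langle f,k(\cdot,\by)\rangle_{L^{2}_{\bX}}=(\Sigma f)(\by)$, so $f$ is orthogonal in $L^{2}_{\bX}$ to $\mathbb{H}=\overline{\mathrm{span}\{k(\cdot,\by)\}}$ iff $\Sigma f=0$; moreover $\langle\Sigma f,f\rangle=\|T^{*}f\|^{2}_{L^{2}(d\tau)}$, so $\ker\Sigma=\ker T^{*}$ (this is exactly the equivalence ``$\mathbb{H}$ dense $\iff\lambda_{k}>0$ for all $k$''). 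Since $(T^{*}f)(\bw)=\E[f(\bX)\psi_{\bw}(\bX)]$, we are reduced to: the only $f\in L^{2}_{\bX}$ with $\E[f(\bX)\psi_{\bw}(\bX)]=0$ for $\tau$-a.e.\ $\bw\in\mathbb{S}^{n}$ is $f=0$. I would then upgrade ``$\tau$-a.e.'' to ``every $\bw$'' using that $\bw\mapsto\psi_{\bw}\in L^{2}_{\bX}$ is continuous: pointwise convergence of $\rho(\langle\bw,\cdot\rangle)$ under $\bw_{k}\to\bw$ together with the uniform domination $|\psi_{\bw}(\bx)|\le C_{\rho}(1+\|\bx\|)^{\alpha}$ (valid for all $\bw\in\mathbb{S}^{n}$ by positive homogeneity, with the right-hand side in $L^{2}_{\bX}$ since $\bX\in\mathcal{R}_{2}(\rho,n)$) gives $L^{2}_{\bX}$-convergence by dominated convergence. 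Hence it suffices to show $\mathrm{span}\{\psi_{\bw}:\bw\in\mathbb{S}^{n}\}$ is dense in $L^{2}_{\bX}$.

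\textbf{Step 2: density of the span.} Using $\psi_{\lambda\bw}=\lambda^{\alpha}\psi_{\bw}$ for $\lambda>0$ and writing $\bw=(\bw^{(n)},w_{n+1})$, that span equals $\mathrm{span}\{\bx\mapsto\rho(\langle\ba,\bx\rangle+b):\ba\in\R^{n},\,b\in\R\}$, the usual two-layer class with biases. Given $f\in L^{2}_{\bX}$ orthogonal to it, I would pass to the \emph{finite} signed Borel measure $d\nu=f\,d\mu$ (finite because $\mu$ is a probability measure and $f\in L^{2}_{\bX}\subseteq L^{1}_{\bX}$), for which $\int_{\R^{n}}\rho(\langle\ba,\bx\rangle+b)\,d\nu(\bx)=0$ for all $\ba,b$, and show $\nu=0$. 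The route: mollify in the bias variable $b$ and differentiate under the integral sign $\lceil\alpha\rceil+1$ times (justified by dominated convergence using the moment bound from $\bX\in\mathcal{R}_{2}(\rho,n)$), which collapses $b\mapsto\rho(\langle\ba,\bx\rangle+b)$ to a multiple of an indicator of a half-space; one concludes that $\nu$ annihilates every open half-space $\{\bx:\langle\ba,\bx\rangle>c\}$, and a finite signed measure charging no half-space is identically zero (half-spaces generate the Borel $\sigma$-algebra, or invoke injectivity of the Radon transform via Cram\'er--Wold). Hence $f=0$ $\mu$-a.e. For activations of practical interest such as the ReLU this is the elementary fact that $\partial_{b}^{2}(t+b)_{+}=\delta_{-t}$. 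An alternative is to invoke the Leshno et al.\ universal approximation theorem (already used for Lemma~\ref{lemma:uid_when_infty}) to get density of the span in $C(\R^{n})$ for the compact-open topology; but the genuinely linear case $\rho(z)=cz$ is \emph{not} dense, so this variant needs $\rho$ assumed non-linear (which is the relevant setting, and which positive homogeneity makes equivalent to non-polynomial).

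\textbf{Main obstacle.} The delicate point is the passage from $C^{0}$-approximation to $L^{2}_{\bX}$-approximation: functions in $\mathrm{span}\{\psi_{\bw}\}$ grow polynomially rather than decay, so one cannot approximate a compactly supported continuous function uniformly on a large ball and control the $L^{2}_{\bX}$-tail for free. This is precisely why I would route the argument through the signed-measure/discriminatoriness formulation instead of through Leshno's theorem directly, trading the tail-control difficulty for the bookkeeping of differentiating under $\int\cdots\,d\nu$ (which the moment condition $\bX\in\mathcal{R}_{2}(\rho,n)$ is tailored to supply). If one prefers brevity, this density is also established in Bach~\cite{bach2017breaking}, and the lemma may be deduced from the results there.
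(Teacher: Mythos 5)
Your proposal is correct in outline but follows a genuinely different route from the paper. The paper's proof has two steps: (i) it shows that every filter $\psi_{\bw}$, $\bw\in\mathbb{S}^n$, lies in the $L^2(\mathbb{R}^n,d\mu)$-closure of $\mathbb{H}$, by explicitly approximating $\psi_{\bw}$ with the RKHS elements $f_k=\int_{\mathbb{S}^n} g_k(\bv)\psi_{\bv}\,d\tau(\bv)$ for bump functions $g_k$ concentrating at $\bw$; (ii) it then quotes Hornik's Theorem 1, which gives density of the two-layer class \emph{directly in $L^p$ of a finite measure}, so the $C^0$-versus-$L^2_\bX$ tail obstacle you single out never arises --- this is exactly why the paper cites Hornik rather than Leshno et al. Your Step 1 reaches the same reduction (density of $\mathbb{H}$ follows from density of $\mathrm{span}\{\psi_{\bw}\}$) through the operator identities $\ker\Sigma=\ker T^*$ together with continuity of $\bw\mapsto\psi_{\bw}$ in $L^2_\bX$; this is sound and arguably cleaner than the paper's mollification on the sphere, since no approximants need to be constructed. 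Your Step 2 then re-proves the universal-approximation ingredient from scratch via the Cybenko-style signed-measure argument; what this buys is a self-contained proof valid in $L^2(d\mu)$ for unbounded homogeneous activations (strictly speaking, Hornik's Theorem~1 assumes a bounded nonconstant activation, so the paper's citation implicitly needs the standard remark that bounded nonconstant functions such as $t\mapsto(t+1)_+-t_+$ lie in the span of shifted filters), at the price of the measure-theoretic bookkeeping you describe.

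Three small points to repair in a full write-up. First, differentiating $\lceil\alpha\rceil+1$ times in the bias produces a delta (i.e.\ the Radon transform of $\nu$), not a half-space indicator; one fewer derivative gives the indicator, and either endpoint finishes via Cram\'er--Wold, so this is only an off-by-one. Second, ``half-spaces generate the Borel $\sigma$-algebra'' is not by itself a valid uniqueness argument for \emph{signed} measures (half-spaces are not a $\pi$-system); keep the Cram\'er--Wold/characteristic-function route you also mention. Third, non-linearity (for degree-one homogeneity, equivalently non-polynomiality) of $\rho$ must indeed be assumed, as you note: the lemma is false for $\rho(z)=cz$, an implicit hypothesis on which the paper's own proof also relies through the ``nonconstant'' assumption in Hornik's theorem.
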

\begin{proof}
First, note that the function $\bx \in\mathbb{R}^n\mapsto k(\bx,\bx)$ is in $L^1(\mathbb{R}^n,d\mu)$. Indeed
\begin{align*}
\int_{\mathbb{R}^n} \int_{\mathbb{S}^n} \psi_\bw(\bx)^2\,d\tau(\bw)\,d\mu(\bx) & = \int_{\mathbb{R}^n} (1+\norm{\bx}^2)\int_{\mathbb{S}^n} \psi_\bw(\bx / \norm{(\bx,1)})^2\,d\tau(\bw)\,d\mu(\bx) 
\\ & \leq (1+\expval\norm{\bX}^2)\max_{\bw,\by\in\mathbb{S}^n} \psi_\bw(\by)^2
\end{align*}
This implies that $\mathbb{H} \subseteq L^2(\mathbb{R}^n,d\mu)$. Now, we would like to show that $V_\sigma$ is dense in $\overline{\mathbb{H}}$, where
$$
V_\sigma = \bra*{\sum_{i=1}^k u_i\psi_{\bw_i} \st \bu \in \mathbb{R}^k, \bw_1,\dots,\bw_k \in\mathbb{S}^n, k \geq 1}
$$
It suffices to show that, for every $\bw \in\mathbb{S}^{n-1}$, there exists a sequence $\bra{f_n}_{n\geq 1} \subset \mathbb{H}$ such that $f_n \to \psi_\bw$ in $L^2_\bX$. Choose $g_k\in L^2(\mathbb{S}^n,d\tau)$ such that $\mathrm{supp}(g_k)\subseteq B_{1/k}(\bw) \doteq \bra{\bv\in\mathbb{S}^n\st \norm{\bv - \bw} \leq 1/k}$, $\int_{\mathbb{S}^n} g(\bv)\,d\tau(\bv) = 1$ and $g_k \geq 0$, and define $f_k \in \mathbb{H}$ as $f_k(x) = \int_{\mathbb{S}^n}g_k(\bv)\psi_\bv(x)\,d\tau(\bx)$. Then
\begin{align*}
\norm{f_k - \psi_\bw}_{L^2(\mathbb{R}^n,d\mu)}^2 & = \int_{\mathbb{R}^n} \parr*{\int_{\mathbb{S}^n} g_k(\bv)(\psi_\bv(\bx) - \psi_\bw(\bx))\,d\tau(\bv)}^2\,d\mu(\bx)
\\ &
\leq  (1+\expval\norm{X}^2)\max_{\substack{\bv \in B_{1/k}(\bw) \\ \by \in \mathbb{S}^n}} (\psi_\bv(\by) - \psi_\bw(\by))^2 \to 0
\end{align*}
as $k\to \infty$. 
This shows that $V_\sigma$ is dense in $\overline{\mathbb{H}}$. Thanks to Theorem 1 in \citep{hornik1991approximation}, it holds that $V_\sigma$ is dense in $L^2(\mathbb{R}^n,d\mu)$. This implies the statement of the lemma.
\end{proof}

\end{document}